\def\expect{\ensuremath{\mathbb{E}}}
\def\cL{\ensuremath{\mathfrak{L}}}
\def\cN{\ensuremath{\mathfrak{N}}}
\def\cQ{\ensuremath{\mathfrak{Q}}}
\def\cU{\ensuremath{\mathfrak{U}}}
\def\fS{\ensuremath{\mathfrak{S}}}
\def\One{\ensuremath{\mathbbm{1}}}
\DeclareMathOperator{\Id}{Id}
\DeclareMathOperator{\Rev}{Rev}
\DeclareMathOperator{\Ext}{Ext}
\newtheoremstyle{plain}%style name
  {\medskipamount}%space before
  {\smallskipamount}%space after
  {\sl}%font used
  {0pt}%indentation
  {\bfseries}%modifier theorem head
  {.}%punctuation between theorem head and body
  { }%space after punctuation
  {\thmname{#1}\thmnumber{ #2}{\normalfont\thmnote{ (#3)}}}%theorem specifier
\theoremstyle{plain}
\newtheorem{theorem}{Theorem}[section]
\newtheorem{lemma}[theorem]{Lemma}
\newtheorem{proposition}[theorem]{Proposition}
\newtheorem{corollary}[theorem]{Corollary}
\newtheorem{conjecture}[theorem]{Conjecture}
\newtheorem{problem}[theorem]{Problem}
\newtheorem{question}[theorem]{Question}
\newtheorem{definition}[theorem]{Definition}
\newtheorem{notation}[theorem]{Notation}
\newtheorem*{remark*}{Remark}
\newenvironment{proofsketch}[1][]{%
\begingroup
\begin{proof}[#1]}%
{\end{proof}\endgroup}
\newenvironment{proofcases}[1][1]{%
\begingroup\def\case{\item}
\begin{asparaenum}[\emph{Case} #1.]}%
{\end{asparaenum}\medskip\endgroup}
\author{Josefran de Oliveira Bastos\thanks{Supported by Coordena\c{c}\~{a}o de
  Aperfei\c{c}oamento de Pessoal de N\'{\i}vel Superior (CAPES/Brazil).}
  \and Leonardo Nagami Coregliano\thanks{Supported by S\~{a}o Paulo Research Foundation
  (FAPESP), grant~\#2013/23720-9.%
}}
\title[Packing densities and minimum number of monotone sequences]{%
  Packing densities of layered permutations and the minimum number of monotone sequences in layered permutations%
}
\affiliation{
  % one line per affiliation, no postal codes, grant numbers or similar
  Instituto de Matem\'{a}tica e Estat\'{\i}stica, Universidade de S\~{a}o Paulo, Brazil
}
\keywords{permutations, packing densities, extremal combinatorics, monotone subsequences}
\begin{document}
\publicationdetails{18}{2016}{2}{7}{1313}
\maketitle
\begin{abstract}
    In this paper, we present two new results of layered permutation densities. The first one generalizes
    theorems from H\"{a}st\"{o} (2003) and Warren (2004) to compute the permutation packing of permutations
    whose layer sequence is~$(1^a,\ell_1,\ell_2,\ldots,\ell_k)$ with~$2^a-a-1\geq k$ (and similar
    permutations). As a second result, we prove that the minimum density of monotone sequences of
    length~$k+1$ in an arbitrarily large layered permutation is asymptotically~$1/k^k$. This value is
    compatible with a conjecture from Myers (2003) for the problem without the layered restriction (the same
    problem where the monotone sequences have different lengths is also studied).
\end{abstract}

\section{Introduction}

As usual, a \emph{permutation} over~$[n] = \{1,2,\ldots,n\}$ is a bijective function
of~$[n]$ onto itself. We denote the set of all permutations over~$[n]$ by~$\fS_n$
and, for every~$\sigma\in\fS_n$, we say that the \emph{length} of~$\sigma$
(denoted~$\lvert\sigma\rvert$) is~$n$.

We denote by~$\naturals$ the set of non-negative integers and
let~$\naturals^*=\naturals\setminus\{0\}$. We also denote by~$\fS = \bigcup_{n\in\naturals}\fS_n$ the
set of all finite permutations.
%% and denote by~$\oldepsilon$ the \emph{empty permutation}, which is the only
%% permutation of length~$0$.

We also use the notation~$\{i_1,i_2,\ldots,i_k\}_<\subset A$ as a shorthand
for~$\{i_1,i_2,\ldots,i_k\}\subset A$ with~$i_1<i_2<\cdots<i_k$. Furthermore, we
frequently denote a permutation~$\sigma\in\fS_n$
by~$(\sigma(1)\sigma(2)\cdots\sigma(n))$ using extra parentheses whenever the
notation starts to get too ambiguous.

Let~$\sigma\in\fS_n$ and let~$\{i_1,i_2,\ldots,i_m\}_<\subset[n]$, the
\emph{subpermutation induced} by~$\{i_1,i_2,\ldots,i_m\}$ in~$\sigma$ is the unique
permutation~$\tau\in\fS_m$ such that for every~$j,k\in[m]$ we
have~$\sigma(i_j)<\sigma(i_k)$ if and only if~$\tau(j) < \tau(k)$ and it is
denoted by~$\sigma[\{i_1,i_2,\ldots,i_m\}]$. For example, if~$\sigma = (68153427)$,
then we have~$\sigma[\{1,3,6\}] = (312)$ and~$\sigma[\{2,4,7,8\}] = (4213)$.

Furthermore, if~$\tau\in\fS_m$ and~$\sigma\in\fS_n$, then we define the
\emph{number of occurrences}~$\Lambda(\tau, \sigma)$ of the permutation~$\tau$
in~$\sigma$ as
\begin{align*}
  \Lambda(\tau,\sigma)
  & =
  \left\lvert \left\{A \subset [n] : \sigma[A] = \tau\right\}\right\rvert
  =
  \left\lvert \left\{A\in\binom{[n]}{m} : \sigma[A] = \tau\right\}\right\rvert,
\end{align*}
where
\begin{align*}
  \binom{[n]}{m} & = \{A\subset [n] : \left\lvert A\right\rvert = m\}.
\end{align*}

We also define the \emph{density}~$p(\tau, \sigma)$ of~$\tau$ in~$\sigma$ as
\begin{align*}
  p(\tau,\sigma) & =
  \begin{dcases*}
    \binom{n}{m}^{-1}\Lambda(\tau,\sigma), & if~$m\leq n$;\\
    0, & if~$m > n$;
  \end{dcases*}
\end{align*}
which, when~$m\leq n$, coincides with the probability that we induce~$\tau$ by
picking uniformly at random an element of~$\binom{[n]}{m}=\{A\subset[n]:|A|=m\}$. We
extend the definitions of~$\Lambda$ and~$p$ linearly in the first coordinate to
(finite) formal linear combinations of elements of~$\fS$ (\textit{i.e.}, we extend their
domain to~$\reals\fS\times\fS$). Let~$f\in\reals\fS$ be a formal linear combination of
permutations. For every~$N\in\naturals$, define
\begin{align*}
  %% \Lambda_N(f) & = \max\{\Lambda(f,\sigma) : \sigma\in\fS_N\};\\
  p_N(f) & = \max\{p(f,\sigma) : \sigma\in\fS_N\};\\
  \Ext_N(f) & = \{\sigma\in\fS_N : p(f,\sigma) = p_N(f)\};\\
  p(f) & = \lim_{N\to\infty}p_N(f).
\end{align*}

The following argument, which has already become part of the folklore of extremal
combinatorics, proves that the above limit indeed exists.

\begin{lemma}
  For every~$f\in\reals\fS$, there exists~$N_0\in\naturals$ such that~$p_N(f)\geq p_{N+1}(f)$
  for every~$N\geq N_0$.
\end{lemma}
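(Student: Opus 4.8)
The plan is to establish and then iterate the standard ``deletion averaging'' identity. First I would record the following fact: if $\tau\in\fS_m$ and $\sigma\in\fS_n$ with $n\geq m+1$, then
\begin{align*}
  p(\tau,\sigma) & = \frac{1}{n}\sum_{B\in\binom{[n]}{n-1}}p(\tau,\sigma[B]).
\end{align*}
The proof of this is a two-stage sampling argument: to pick $A\in\binom{[n]}{m}$ uniformly at random one may first pick $B\in\binom{[n]}{n-1}$ uniformly at random and then pick $A$ uniformly at random among the size-$m$ subsets of $B$ (this is legitimate precisely because $m\leq n-1$, so such subsets exist); taking expectations of $\One[\sigma[A]=\tau]$ and using that $\sigma[A]=(\sigma[B])[A]$ whenever $A\subseteq B$ gives the identity. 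Here it is essential that $n\geq m+1$: for $n=m$ the right-hand side is $0$ while the left-hand side may equal $1$.

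Next, since $p(\cdot,\sigma)$ is extended linearly in the first coordinate, the same identity holds with any $f\in\reals\fS$ in place of $\tau$, provided $n\geq M+1$, where $M$ denotes the maximum length of a permutation appearing with nonzero coefficient in $f$; when $f=0$ the claimed monotonicity is trivial, so assume $f\neq 0$ and set $N_0=M$. I would then argue as follows. Fix $N\geq N_0$ and choose $\sigma\in\Ext_{N+1}(f)$, so that $p_{N+1}(f)=p(f,\sigma)$ with $\sigma\in\fS_{N+1}$ and $N+1\geq M+1$. Applying the identity and then bounding each summand using that $\sigma[B]\in\fS_N$ for every $B\in\binom{[N+1]}{N}$, together with the fact that $\binom{[N+1]}{N}$ has exactly $N+1$ elements, we get
\begin{align*}
  p_{N+1}(f) = p(f,\sigma) = \frac{1}{N+1}\sum_{B\in\binom{[N+1]}{N}}p(f,\sigma[B]) \leq \frac{1}{N+1}\cdot(N+1)\cdot p_N(f) = p_N(f).
\end{align*}
This proves $p_N(f)\geq p_{N+1}(f)$ for all $N\geq N_0$.

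The argument has no real obstacle; the only point requiring care is the off-by-one condition $n\geq m+1$ — equivalently, choosing $N_0$ to be the top length occurring in $f$ rather than anything smaller — since the deletion identity genuinely fails when one removes a point from a permutation whose length already equals that of $\tau$. A secondary bookkeeping remark is that $p_N(f)$ and $p_{N+1}(f)$ need not be nonnegative when $f$ has negative coefficients, but this is irrelevant: the averaging inequality above holds regardless of signs.
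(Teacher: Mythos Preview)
Your proof is correct and follows essentially the same approach as the paper: the paper's random deletion of an index $\bm{j}\in[N+1]$ and the expectation identity $p(\tau_i,\sigma)=\expect[p(\tau_i,\sigma[[N+1]\setminus\{\bm{j}\}])]$ are exactly your averaging identity over $B\in\binom{[N+1]}{N}$, just parametrized by the deleted point rather than the retained set. The paper also takes $N_0=\max_i\lvert\tau_i\rvert$, so even the choice of threshold coincides; your extra remarks on the necessity of $n\geq m+1$ and on possible negative coefficients are correct observations that the paper leaves implicit.
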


\begin{proof}
  Write~$f = \sum_{i=1}^kc_i\tau_i$ and let~$N_0 = \max\{\lvert\tau_i\rvert :
  i\in[k]\}$.

  Note that if~$N\geq N_0$ and~$\sigma\in\fS_{N+1}$ is such
  that~$p_{N+1}(f)=p(f,\sigma)$, then we can compute~$p(\tau_i,\sigma)$ using the
  following random experiment. We first pick~$\bm{j}\in[N+1]$ uniformly at random
  then we compute the density of~$\tau_i$ in~$\sigma[[N+1]\setminus\{\bm{j}\}]$. Note
  that
  \begin{align*}
    p(\tau_i,\sigma) & = \expect[p(\tau_i,\sigma[[N+1]\setminus\{\bm{j}\}])].
  \end{align*}

  But then, by linearity of expectation, we have
  \begin{align*}
    p_{N+1}(f) & = p(f,\sigma) = \expect[p(f,\sigma[[N+1]\setminus\{\bm{j}\}])]
    \leq p_N(f),
  \end{align*}
  as desired.
\end{proof}

One of the most studied problems involving permutations is the \textit{packing density
  problem} stated below.

\begin{problem}[Permutation packing]
  Let~$f\in\reals\fS$ be a formal linear combination of permutations. For
  every~$N\in\naturals$, how large can~$p(f,\sigma)$ be and what are the properties that
  a~$\sigma\in\Ext_N(f)$ has?
\end{problem}

In this problem, one interesting subfamily of~\fS\ is the family of \emph{layered
  permutations}. A permutation~$\sigma\in\fS_n$ of length~$n$ is called
\emph{layered} if there exists~$\{i_1,i_2,\ldots,i_k\}_<\subset[n+1]$ with~$i_1=1$
and~$i_k=n+1$ such that
\begin{align*}
  \sigma[\{i_j,i_j+1,\ldots,i_{j+1}-1\}] = ((i_{j+1}-1)(i_{j+1}-2)\cdots(i_j + 1)i_j),
\end{align*}
for every~$j\in[k-1]$, and~$\sigma(a) < \sigma(b)$ whenever~$a,b\in[n]$ are such
that~$a < i_j \leq b$ for some~$j\in[k-1]$.

This means that the permutation~$\sigma$ consists of an increasing sequence of
decreasing sequences. Such decreasing sequences are called~\emph{layers}
of~$\sigma$. We denote a layered permutation by the sequence of lengths of its layers
as~$(\ell_1,\ell_2,\ldots,\ell_k)$ (it is easy to see that such representation is
unique) and we call such a sequence the \emph{layer sequence} or \emph{layer
  decomposition} of~$\sigma$.

Note that the function that maps a layered permutation to its layer sequence provides
an isomorphism to the theory of compositions (\textit{i.e.}, ordered partitions of~$n$).

An \emph{antilayer} of a layered permutation~$\sigma$ is a maximal contiguous
non-empty sequence of layers of length~$1$. A \emph{block} of a layered permutation
is either a layer of length at least~$2$ or an antilayer, \textit{i.e.}, a block is a maximal
monotone interval. When it is convenient, we
also denote a layered permutation by the sequence of lengths of its blocks using a
``hat'' to denote when the corresponding block is an antilayer, and we call this
sequence, the \emph{block sequence} or the \emph{block decomposition} of the
permutation, \textit{e.g.}, the permutation~$\sigma=(321457689)$ has block
decomposition~$(3,\widehat{2},2,\widehat{2})$. The theorem below shows that the
problem of packing a layered permutation is much easier than the general case.

Throughout this paper, we let~$\reals\fS$ denote the set of formal linear combinations
of elements of~$\fS$ with real coefficients and we say that~$f \in \reals \fS$ is a
conical combination when all its coefficients are non-negative.

\begin{theorem}[\cite{AlbertAtkinsonHandleyHoltonStromquist:PackingPermutations}]
  \label{thm:extlayer}
  If~$f\in\reals\fS$ is a conical combination of layered permutations then, for
  every~$N\in\naturals$, there exists a layered permutation in~$\Ext_N(f)$.  This in
  particular means that
  \begin{align*}
    p_N(f) & =
    \max\{p(f,\sigma) : \sigma\in\fS_N \text{ is layered}\}.
  \end{align*}
\end{theorem}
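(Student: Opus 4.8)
The plan is to show that, starting from an arbitrary $\sigma\in\fS_N$ achieving $p_N(f)$, one can "layer" it — that is, construct a layered $\sigma'\in\fS_N$ with $p(f,\sigma')\ge p(f,\sigma)$ — by a sequence of local modifications, each of which does not decrease the density of any individual layered permutation $\tau$, and hence (since $f$ is a conical combination) does not decrease $p(f,\cdot)$. The natural modification is the following: given a set $I\subset[N]$ on which $\sigma$ is monotone, replace the pattern of $\sigma$ on $I$ by the decreasing pattern while leaving the relative order of $I$-values with respect to $[N]\setminus I$ unchanged; more generally, one wants an operation that merges two "increasing runs" of consecutive blocks into a decreasing block, or normalizes the global shape towards a layered one. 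The key claim to isolate and prove is: \emph{for any layered $\tau$ and any such local "decreasing-ization" step, the number of occurrences $\Lambda(\tau,\sigma)$ does not decrease.}

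First I would set up the right notion of an elementary step. A clean choice: say that positions $a<b$ in $[N]$ form an \emph{inversion-completable pair} if $\sigma(a)<\sigma(b)$ and the values $\{\sigma(a),\sigma(b)\}$ are consecutive in value or, more usefully, if swapping $\sigma(a)$ and $\sigma(b)$ keeps everything else fixed; repeatedly applying adjacent such swaps sorts any chosen monotone-increasing block into a decreasing one. I would then prove the \textbf{core monotonicity lemma}: if $\sigma'$ is obtained from $\sigma$ by one adjacent transposition that turns an ascent into a descent (i.e. $\sigma(a)<\sigma(a+1)$, $\sigma'=\sigma\circ(a,a+1)$ in position), then $\Lambda(\tau,\sigma')\ge\Lambda(\tau,\sigma)$ for every layered $\tau$. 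The proof is a direct injection: for each $A\in\binom{[N]}{m}$ with $\sigma[A]=\tau$, I exhibit a corresponding $A'$ with $\sigma'[A']=\tau$, namely $A'=A$ unless $A$ contains exactly one of $a,a+1$ in a way that breaks when passing to $\sigma'$, in which case a careful swap of that one index repairs it — here layeredness of $\tau$ is exactly what guarantees the repair is available and the map is injective, because in a layered pattern two elements whose relative order is "decreasing and adjacent in value" are interchangeable as far as which of $a,a+1$ supports them.

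The bulk of the work — and the step I expect to be the main obstacle — is the bookkeeping in that injection: one must handle the cases according to whether $A\cap\{a,a+1\}$ is $\emptyset$, $\{a\}$, $\{a+1\}$, or $\{a,a+1\}$, check that the induced pattern is unchanged or can be restored, and verify injectivity globally rather than case by case. The cleanest way I know to organize this is to argue at the level of the two-point and three-point patterns affected by the swap and to use that $\tau$ layered forces any occurrence meeting the swapped pair to look locally like a piece of a decreasing run or like a single point jumping to the next layer; then the "repair" is forced and invertible. Once the core lemma is in hand, I finish by iterating: apply adjacent ascent-to-descent transpositions until no descent can be created without destroying already-built layer structure; a standard potential-function argument (e.g. the number of non-inversions, or the lexicographic distance to the layered permutation with the same descent set) shows this terminates, and the terminal permutation is layered. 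Since each step preserves or increases $p(f,\cdot)$ and the process starts at an extremal $\sigma$, the terminal layered permutation lies in $\Ext_N(f)$, which gives both conclusions of the theorem. A brief remark would note that the conical hypothesis is essential — it is what lets us pass from "every $\Lambda(\tau,\cdot)$ is non-decreasing" to "$p(f,\cdot)$ is non-decreasing."
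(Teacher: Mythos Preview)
The paper does not supply a proof of this theorem; it is quoted from the cited reference and used as a black box, so there is no argument in the present paper to compare your proposal against.

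Your proposal, however, has a fatal gap: the core monotonicity lemma is false. Take $\tau=\Id_2=(12)$, which is layered (layer sequence $(1,1)$), and $\sigma=(12)\in\fS_2$. The unique ascent is at $a=1$; your swap gives $\sigma'=(21)$, and $\Lambda(\Id_2,\sigma')=0<1=\Lambda(\Id_2,\sigma)$. More structurally: iterating adjacent ascent-to-descent transpositions drives every $\sigma\in\fS_N$ to $\Rev_N$, the unique permutation with no ascents, and $p(\tau,\Rev_N)=0$ for every layered $\tau$ with two or more layers. So the elementary step cannot be monotone for all layered $\tau$ at once, and no amount of case-analysis bookkeeping will rescue the injection --- when an occurrence of $\tau$ uses positions $a$ and $a+1$ as an \emph{inter-layer ascent}, the swap simply destroys it and there is no candidate $A'$ to send it to.

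The difficulty your plan misses is that a layered pattern carries \emph{both} structures simultaneously: descents inside layers and ascents between layers. Any elementary move that only manufactures descents helps the former and damages the latter. A correct local-move argument needs an operation whose set of fixed points is exactly the layered permutations (equivalently, the $231$- and $312$-avoiders; equivalently, those $\sigma$ in which every descent is between consecutive values), not the single permutation $\Rev_N$. The proof in the original reference achieves this with a more delicate modification together with an extremal selection among maximisers; if you want to keep the overall ``local moves plus potential'' shape of your argument, the elementary step is the piece that has to be redesigned from scratch.
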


Based on the theorem above, \cite{Price:PackingDensitiesOfLayeredPatterns}
suggested an algorithm that, given a conical combination~$f$ of layered permutations,
computes lower bounds to~$p(f)$ converging to~$p(f)$.

Much more work has been done to compute the packing density problem of layered permutations than of
non-layered permutations. \cite{AlbertAtkinsonHandleyHoltonStromquist:PackingPermutations} solved the problem
of layered permutations with two layers. \cite{Hasto:PackingOtherLayered} solved the problem for any layered
permutation that has~$r$ layers of length at least~$\log{r+1}$. He also solved the problem for all layered
permutations of the form~$(k,1,k)$ with~$k\geq 3$. Our first result in this work is the following.

\begin{theorem}\label{thm:smallanti}
  Let~$a,\ell_1,\ell_2,\ldots,\ell_k\in\naturals^*$ be positive integers such that~$2\leq
  a\leq \ell_1\leq\ell_2\leq\cdots\leq\ell_k$ and~$2^a-a-1\geq k$.
  If~$\sigma$ is the layered permutation~$(\widehat{a},\ell_1,\ell_2,\ldots,\ell_k)$,
  then we have
  \begin{align*}
    p(\sigma) & =  \frac{|\sigma|!}{|\sigma|^{|\sigma|}}\,
    \frac{a^a}{a!}\prod_{i=1}^k\frac{\ell_i^{\ell_i}}{\ell_i!}.
  \end{align*}
\end{theorem}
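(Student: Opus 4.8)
The plan is to follow the standard two-sided strategy for packing densities of layered permutations: establish a matching lower bound and upper bound, where by Theorem~\ref{thm:extlayer} the upper bound only needs to consider layered hosts. For the lower bound, I would exhibit an explicit family of layered permutations and compute the limiting density. The natural candidate is the ``self-similar'' construction: take~$\sigma = (\widehat{a},\ell_1,\dots,\ell_k)$ and build the host~$\sigma_N$ by inflating each layer (and the antilayer) proportionally to its length, iterating so that the proportions are~$a : \ell_1 : \cdots : \ell_k$ normalized by~$|\sigma|$. A random size-$|\sigma|$ subset of such a host lands the antilayer's~$a$ points in the inflated antilayer-block with probability tending to~$(a/|\sigma|)^a\binom{|\sigma|}{a}$ times a factor~$a!/a^a$ coming from the multinomial, and similarly for each~$\ell_i$; multiplying these and the appropriate multinomial coefficient~$|\sigma|!/(a!\prod\ell_i!)$ yields exactly~$\frac{|\sigma|!}{|\sigma|^{|\sigma|}}\frac{a^a}{a!}\prod_i\frac{\ell_i^{\ell_i}}{\ell_i!}$. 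This computation is routine once the construction is fixed.

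The substance is the \emph{upper bound}, and this is where the hypothesis~$2^a-a-1\ge k$ must be used. I would argue as follows. Let~$\pi$ be an arbitrary layered host with block sequence, say, with blocks~$B_1,\dots,B_m$ of sizes~$b_1,\dots,b_m$ (each either a layer or an antilayer of~$\pi$), summing to~$|\pi|=N$. An occurrence of~$\sigma$ in~$\pi$ selects, for the single long decreasing-then-increasing structure of~$\sigma$, a set of points; the key observation (as in H\"ast\"o's and Warren's arguments) is that in any occurrence, each layer~$\ell_i$ of~$\sigma$ must be covered by a \emph{single} block of~$\pi$ (it cannot straddle two blocks since~$\ell_i\ge a\ge 2$ and consecutive blocks of a layered permutation form an increasing step, not a decreasing one), whereas the~$a$ singleton layers of~$\sigma$'s antilayer may be distributed among several blocks of~$\pi$. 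So one sets up, for each way of assigning the~$k$ long layers to blocks and distributing the~$a$ antilayer-points, a count, and bounds~$p(\sigma,\pi)$ by a sum of products of local densities. The local density of a decreasing run of length~$\ell$ inside a single layer of~$\pi$ of relative size~$x$ is at most~$x^\ell$, and crucially the function~$x\mapsto x^\ell$ is maximized by concentrating mass — so by a convexity/rearrangement argument the optimal~$\pi$ has each long layer of~$\sigma$ in its own block of~$\pi$ with the block-sizes in the ratio~$\ell_1:\cdots:\ell_k$, plus one block carrying the antilayer. The constraint~$2^a-a-1\ge k$ is exactly what guarantees that placing all~$a$ antilayer-points into a \emph{single} antilayer-block of~$\pi$ beats (or ties) all the alternative distributions of those~$a$ points among the~$\le 2^a-1$ nonempty subsets of blocks: there are enough ``long'' blocks (namely~$k$ of them, which is~$\le 2^a-a-1$) that no rearrangement that splits the antilayer gains more than it loses. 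I would isolate this as a lemma: under~$2^a-a-1\ge k$, the maximizing configuration is the self-similar one, and then the value of the maximum is computed by Lagrange multipliers / the weighted AM–GM equality~$\prod x_i^{c_i}\le \prod(c_i/\sum c_j)^{c_i}$ with~$c$'s equal to~$a,\ell_1,\dots,\ell_k$.

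Concretely, the steps in order: (1) fix notation for layered hosts and record the ``each long layer lives in one block'' structural fact; (2) write~$p(\sigma,\pi)$ as a sum over assignments of an explicit product of binomials, and pass to the limit~$N\to\infty$ to get a supremum over finite probability vectors~$(x_1,\dots,x_m)$ of a sum of monomials; (3) prove the optimization lemma — the supremum is~$\frac{|\sigma|!}{|\sigma|^{|\sigma|}}\frac{a^a}{a!}\prod\frac{\ell_i^{\ell_i}}{\ell_i!}$, using~$2^a-a-1\ge k$ to rule out split-antilayer configurations and weighted AM–GM for the unsplit one; (4) combine with the lower-bound construction to conclude equality. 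The main obstacle is step~(3): controlling the combinatorial explosion of antilayer-distribution patterns and showing the inequality~$2^a-a-1\ge k$ is precisely the threshold; I expect this to require a careful case analysis comparing the gain from merging antilayer-points against the loss from stealing mass from the long layers, analogous to but more delicate than H\"ast\"o's count for~$(\widehat{1},\ell_1,\dots,\ell_k)$, and it is also the step where ``and similar permutations'' (e.g.\ reversals, or an antilayer in another position) gets handled by symmetry.
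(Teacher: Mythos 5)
Your overall architecture (lower bound by self-similar construction, upper bound by reducing to layered hosts and optimizing over block configurations, finish with weighted AM--GM) agrees in outline with the paper's, which carries this out through its ``Extended Price Polynomial'' framework. The lower-bound construction is fine and routine. The problem is that your step~(3) --- the optimization lemma --- is exactly where all the difficulty lives, and the mechanism you describe for the role of~$2^a-a-1\ge k$ does not match how that hypothesis actually enters. You gesture at ``distributing the~$a$ antilayer points among the~$\le 2^a-1$ nonempty subsets of blocks'' and claim~$k\le 2^a-a-1$ gives enough long blocks to dominate split configurations, but there is no counting of this kind in a correct argument; the quantity~$2^a-a-1$ is the binomial tail~$\sum_{u=0}^{a-2}\binom{a}{u}$, and it appears only after one proves a block-\emph{merging} inequality: if one host block (of relative size~$x_1$) is merged into the antilayer block (of relative size~$y$), the Extended Price Polynomial gains~$\tfrac{1}{a!}\bigl((y+x_1)^a - y^a - ax_1y^{a-1}\bigr) = \tfrac{1}{a!}\sum_{u=0}^{a-2}\binom{a}{u}y^u x_1^{a-u}$ on the top term but loses a single term of order~$y^a x_1^{\ell_1}$; the hypothesis~$2^a-a-1\ge k$ is the threshold under which the gain covers the loss, and it only does so once you have established two auxiliary facts that you do not mention at all: (i)~the optimal block sizes can be taken non-decreasing (rearrangement), and, crucially, (ii)~the antilayer block can be taken no larger than the~$k$-th largest layer block, i.e.~$y\le x_{N-k+1}$, which is a separate calculus/perturbation lemma. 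Without~(ii) the key comparison~$y^u x_1^{a-u} x_{N-k+1}^{\ell_1}\ge y^a x_1^{\ell_1}$ does not go through.

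So while your ``lower bound plus reduction to a bounded-block optimum'' plan is sound in spirit, it is not a proof as written, and the piece you would need to supply --- an explicit demonstration that the maximum over hosts with~$N$ blocks is attained already at~$N=k+1$ blocks, obtained by a sorting lemma, a bound on the antilayer size, and the binomial merging inequality --- is where the argument really lives; your stated rationale for~$2^a-a-1\ge k$ would not lead you to it. Secondarily, the ``and similar permutations'' remark (antilayer in another position) is not handled ``by symmetry'' in the naive sense: one needs the observation (as in the paper's Corollary~\ref{cor:smallantipermut}, following H\"ast\"o) that~$p(\sigma)\le p(\sigma')$ when~$\sigma'$ is the block-sorted version, followed by constructing a matching point for~$\sigma$ from the optimal point for~$\sigma'$, which is a short but genuine argument rather than a symmetry.
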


The packing density problem is focused on estimating the permutation that maximizes
the number of occurrences of a smaller permutation. It is natural to think of the dual
problem, \textit{i.e.}, trying to find the permutation that \emph{minimizes} the number of
occurrences of a smaller permutation.

\begin{problem}[Permutation minimization]
  Let~$f\in\reals\fS$ be a formal linear combination of permutations. For
  every~$N\in\naturals$, define
  \begin{align*}
    %% \Lambda_N'(f) & = \min\{\Lambda(f,\sigma) : \sigma\in\fS_N\};\\
    p_N'(f) & = \min\{p(f,\sigma) : \sigma\in\fS_N\};\\
    \Ext_N'(f) & = \{\sigma\in\fS_N : p(f,\sigma) = p'_N(f)\};\\
    p'(f) & = \lim_{N\to\infty}p_N'(f).
  \end{align*}
  The problem then consists of computing explicitly the values above, which basically
  means answering the question: how small can~$p(f,\sigma)$ be and what are the
  properties that a~$\sigma \in \Ext_N'(f)$ has?
\end{problem}

Although we can restate the minimization problem as a permutation packing problem, we
lose the result from Theorem~\ref{thm:extlayer} by doing so. So the next problem is a
completely different problem.

\begin{problem}[Layered permutation minimization]
  Let~$f\in\reals\fS$ be a formal linear combination of layered permutations. For
  every~$N\in\naturals$, define
  \begin{align*}
    %% \Lambda_N''(f) & = \min\{\Lambda(f,\sigma) : \sigma\in\fS_N \text{ layered}\};\\
    p_N''(f) & = \min\{p(f, \sigma) : \sigma \in \fS_N \text{ and }\sigma\text{ is layered}\};\\
    \Ext_N''(f) & = \{\sigma\in\fS_N \text{ layered}: p(f,\sigma) = p''_N(f)\};\\
    %% p_N''(f) & = \min\{p(f,\sigma) : \sigma\in\fS_N \text{ layered}\};\\
    p''(f) & = \lim_{N\to\infty}p_N''(f).
  \end{align*}

  The problem then consists of computing explicitly the values above, which basically
  means answering the question: how small can~$p(f,\sigma)$ be if~$\sigma$ is
  layered and what are the properties that a~$\sigma \in \Ext_N''(f)$ has?
\end{problem}

For every length~$n\in\naturals$, two particular permutations deserve special notation. One
is the \emph{identity}, denoted by~$\Id_n=(12\cdots n)$, and the other is the
\emph{reverse}, denoted by~$\Rev_n=(n(n-1)\cdots 1)$.

A well-known theorem by \cite{ErdosSzekeres:ProblemInGeometry} states that every permutation
of~$k^2+1$ elements must contain a monotone subsequence of length~$k+1$. Later, Myers
proved the following quantitative version of this theorem.

\begin{theorem}[\cite{Myers:MinimumMonotoneSubsequences}]
  We have
  \begin{align*}
    p'(\Id_3 + \Rev_3) & = \frac{1}{4}.
  \end{align*}
  Furthermore, for every~$k\geq 2$, we have
  \begin{align*}
    \lim_{N\to\infty}\min\{p(\Rev_{k+1},\sigma) : \sigma\in\fS_N
    \text{ with } p(\Id_{k+1},\sigma) = 0\} & = \frac{1}{k^k}.
  \end{align*}
\end{theorem}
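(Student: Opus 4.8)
The plan is, for each of the two assertions, to obtain the upper bound from an explicit construction and the lower bound from a structural decomposition of an arbitrary permutation.

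\emph{The value $1/4$.} For the upper bound let $\sigma_N$ be the skew sum of two increasing runs of almost equal length: positions $1,\dots,\lceil N/2\rceil$ receive the values $\lfloor N/2\rfloor+1,\dots,N$ in increasing order, and positions $\lceil N/2\rceil+1,\dots,N$ receive $1,\dots,\lfloor N/2\rfloor$ in increasing order. A short case check shows that no copy of $\Id_3$ and no copy of $\Rev_3$ in $\sigma_N$ can use elements of both runs, so $\Lambda(\Id_3,\sigma_N)+\Lambda(\Rev_3,\sigma_N)=\binom{\lceil N/2\rceil}{3}+\binom{\lfloor N/2\rfloor}{3}$, whence $p(\Id_3+\Rev_3,\sigma_N)\to 1/4$ and $p_N'(\Id_3+\Rev_3)\le 1/4+o(1)$. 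For the lower bound, let $\sigma\in\fS_N$ be arbitrary and $2$-colour the complete graph on its $N$ elements by calling a pair \emph{red} when it induces $\Id_2$ (an ascending pair) and \emph{blue} when it induces $\Rev_2$. A monochromatic triangle is then precisely a copy of $\Id_3$ or of $\Rev_3$, so $\Lambda(\Id_3,\sigma)+\Lambda(\Rev_3,\sigma)$ equals the number $M$ of monochromatic triangles. Goodman's identity gives $M=\binom N3-\tfrac12\sum_x r_x b_x$, where $r_x$ and $b_x=N-1-r_x$ are the red and blue degrees of the element $x$; since $r_x b_x\le\bigl(\tfrac{N-1}{2}\bigr)^2$ for every $x$, this yields $M\ge\binom N3-\tfrac18 N(N-1)^2=\tfrac14\binom N3-O(N^2)$, hence $p_N'(\Id_3+\Rev_3)\ge 1/4-O(1/N)$. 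The two bounds give $p'(\Id_3+\Rev_3)=1/4$.

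\emph{The value $1/k^k$.} For the upper bound, take $\sigma$ to be the layered permutation with $k$ layers of sizes $\lceil N/k\rceil$ or $\lfloor N/k\rfloor$. Each layer lies below the next, so the longest increasing subsequence of $\sigma$ has length exactly $k$ and $p(\Id_{k+1},\sigma)=0$; and a decreasing subsequence cannot meet two layers, so $\Lambda(\Rev_{k+1},\sigma)=\sum_{\text{layers }L}\binom{|L|}{k+1}\sim k\binom{N/k}{k+1}$ and $p(\Rev_{k+1},\sigma)\to 1/k^k$. For the lower bound, let $\sigma\in\fS_N$ satisfy $p(\Id_{k+1},\sigma)=0$, and for each element $x$ let $h(x)$ be the length of a longest increasing subsequence of $\sigma$ ending at $x$. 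Then $h(x)\in\{1,\dots,k\}$, and each level set $L_i=\{x:h(x)=i\}$ is a decreasing subsequence, since a longest increasing subsequence ending at the earlier element of an ascending pair inside $L_i$ could be extended by the later element, contradicting the maximality defining $h$. The sets $L_1,\dots,L_k$ partition the $N$ elements, and every $(k+1)$-subset of a single $L_i$ is a copy of $\Rev_{k+1}$, so $\Lambda(\Rev_{k+1},\sigma)\ge\sum_{i=1}^k\binom{|L_i|}{k+1}\ge k\binom{\lceil N/k\rceil}{k+1}$ by convexity of $n\mapsto\binom{n}{k+1}$, i.e.\ $p(\Rev_{k+1},\sigma)\ge 1/k^k-o(1)$; and if the longest increasing subsequence of $\sigma$ is shorter than $k$, this estimate only improves. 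Combined with the construction, this proves the stated limit.

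\emph{Main obstacle.} The two constructions and the level-set argument are routine; the one step that needs an idea is recognising the first statement as the problem of minimising the number of monochromatic triangles in a $2$-colouring of $K_N$, after which Goodman's bound does the work. One should also check that no slack is lost in this reduction, i.e.\ that some permutation essentially attains Goodman's minimum: indeed the ascending/descending colouring of $\sigma_N$ is exactly the colouring with two monochromatic cliques joined by edges of the other colour, which is extremal for Goodman. For the second statement the analogous point is that the level-set lower bound for $\Rev_{k+1}$ is attained, equivalently that the layered extremiser has no cross-layer decreasing $(k+1)$-subsequence, which is immediate from the layers being stacked in increasing order.
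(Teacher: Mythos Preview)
The paper does not actually prove this theorem: it is stated with attribution to \cite{Myers:MinimumMonotoneSubsequences} and used only as background and motivation for Conjecture~\ref{conj:myers} and Theorem~\ref{thm:minmono}. There is therefore no proof in the paper to compare your attempt against.

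That said, your argument is essentially correct and self-contained. The reduction of the first assertion to Goodman's monochromatic-triangle bound is exactly the standard route, and your skew-sum construction realises the extremal colouring. For the second assertion, the level-set decomposition via $h(x)$ (the length of a longest increasing run ending at $x$) is the classical Erd\H{o}s--Szekeres/patience-sorting argument, and combining it with convexity of $n\mapsto\binom{n}{k+1}$ is the right idea.

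One small slip: in the convexity step you write $\sum_{i=1}^k\binom{|L_i|}{k+1}\ge k\binom{\lceil N/k\rceil}{k+1}$, but $\lceil N/k\rceil\ge N/k$, so $k\binom{\lceil N/k\rceil}{k+1}$ is an \emph{upper} bound on the balanced sum, not a lower bound. The correct inequality is $\sum_i\binom{|L_i|}{k+1}\ge r\binom{\lceil N/k\rceil}{k+1}+(k-r)\binom{\lfloor N/k\rfloor}{k+1}$ with $r=N-k\lfloor N/k\rfloor$, or simply $\ge k\binom{\lfloor N/k\rfloor}{k+1}$. This does not affect the asymptotic conclusion $p(\Rev_{k+1},\sigma)\ge 1/k^k-o(1)$, so the proof stands once the ceiling is replaced by a floor (or by the exact balanced partition).
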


The second part of the theorem above led Myers to conjecture
that~$p'(\Id_{k+1}+\Rev_{k+1})=1/k^k$ for every~$k\geq 2$ (the case~$k=2$ is the
first part of the theorem). We state a generalized version of Myers' Conjecture
below.

\begin{conjecture}%
  [Generalization of \cite{Myers:MinimumMonotoneSubsequences}]
  \label{conj:myers}
  For every~$k,\ell\geq 2$, we have
  \begin{align*}
    p'(\Id_{\ell+1}+\Rev_{k+1}) & =
    \min\left\{\frac{1}{k^\ell},\frac{1}{\ell^k}\right\}.
  \end{align*}
\end{conjecture}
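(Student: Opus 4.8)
The plan is to prove the upper and lower bounds separately; the upper bound is routine, and the lower bound is where essentially all the difficulty lies.

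For the \textbf{upper bound} it suffices, for each $N$, to exhibit $\sigma\in\fS_N$ with $p(\Id_{\ell+1}+\Rev_{k+1},\sigma)\le\min\{1/k^\ell,1/\ell^k\}+o(1)$. Take $\sigma$ to be the layered permutation whose layer sequence consists of $\ell$ layers of sizes as equal as possible. An increasing subsequence of a layered permutation meets each layer at most once, so $p(\Id_{\ell+1},\sigma)=0$; a decreasing subsequence lies inside a single layer, so $p(\Rev_{k+1},\sigma)=\binom{N}{k+1}^{-1}\sum_{i=1}^{\ell}\binom{\ell_i}{k+1}\to 1/\ell^k$ by the standard asymptotics $\binom{m}{k+1}\sim m^{k+1}/(k+1)!$. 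Applying the complement map $\sigma\mapsto(N+1-\sigma(1),\dots,N+1-\sigma(N))$, which exchanges $p(\Id_m,\cdot)$ with $p(\Rev_m,\cdot)$ and turns a layered permutation into a decreasing sequence of increasing runs, to the layered permutation with $k$ nearly equal layers produces a permutation with $p(\Rev_{k+1},\cdot)=0$ and $p(\Id_{\ell+1},\cdot)\to 1/k^\ell$. Choosing whichever of these two families is smaller gives $p'(\Id_{\ell+1}+\Rev_{k+1})\le\min\{1/k^\ell,1/\ell^k\}$.

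For the \textbf{lower bound} we must show that every $\sigma\in\fS_N$ satisfies $p(\Id_{\ell+1},\sigma)+p(\Rev_{k+1},\sigma)\ge\min\{1/k^\ell,1/\ell^k\}-o(1)$. Equip $[N]$ with the partial order $i\preceq j\iff(i\le j\text{ and }\sigma(i)\le\sigma(j))$, so that chains are increasing subsequences and antichains are decreasing subsequences. If the longest increasing subsequence has length at most $\ell$, then by Mirsky's theorem $[N]$ is covered by at most $\ell$ decreasing subsequences $D_1,\dots,D_\ell$, and since subsets of a $D_i$ induce $\Rev_{k+1}$ we get $\Lambda(\Rev_{k+1},\sigma)\ge\sum_i\binom{|D_i|}{k+1}\ge\ell\binom{N/\ell}{k+1}$ using $\sum_i|D_i|=N$ and convexity of $n\mapsto\binom{n}{k+1}$, hence $p(\Rev_{k+1},\sigma)\ge 1/\ell^k-o(1)$. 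Symmetrically, if the longest decreasing subsequence has length at most $k$, then Dilworth's theorem yields $p(\Id_{\ell+1},\sigma)\ge 1/k^\ell-o(1)$. In both cases the desired bound holds, so it remains to treat permutations that contain both $\Id_{\ell+1}$ and $\Rev_{k+1}$.

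This last case is the \textbf{main obstacle}, and it is the reason the statement is only a conjecture: here one cannot pass to a structured subclass, since Theorem~\ref{thm:extlayer} governs only the maximization problem. The route I would take is first to settle the \emph{layered} version, which by the computation above reduces to minimizing the symmetric function $F(x)=(\ell+1)!\,e_{\ell+1}(x_1,x_2,\dots)+\sum_i x_i^{k+1}$ over $\{x_i\ge 0:\sum_i x_i=1\}$ with the number of parts free; a Lagrange-multiplier and compactness argument should show the optimum degenerates to finitely many equal parts and equals $1/\ell^k$, and running the same analysis on the complements of layered permutations (where $p(\Id_{\ell+1}+\Rev_{k+1},\sigma^c)=p(\Id_{k+1}+\Rev_{\ell+1},\sigma)$) gives $1/k^\ell$, so the minimum over this combined class matches the conjectured value. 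The genuinely hard step is removing the layered hypothesis: one would need either a stability/smoothing argument forcing a near-optimal $\sigma$ to be close to one of the two extremal families, so that the estimate transfers from those families, or a flag-algebra certificate, i.e.\ an explicit positive-semidefinite dual witness valid simultaneously for all $k$ and $\ell$. Producing such a certificate uniformly in the parameters, rather than verifying it numerically for small $k,\ell$ (as was done for $k=\ell=2$), is what currently seems out of reach.
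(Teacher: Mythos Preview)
The statement you are addressing is labelled a \emph{conjecture} in the paper, and the paper does not prove it; there is therefore no ``paper's own proof'' to compare your attempt against. Your write-up is accordingly not a proof but an honest survey of what is known and where the obstruction lies, and on that level it is largely accurate.

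Your upper-bound paragraph is correct and matches exactly what the paper observes immediately after stating the conjecture: the layered permutation with $\ell$ equal layers (respectively its complement with $k$ equal ascending runs) kills one of the two patterns and realises $1/\ell^k$ (respectively $1/k^\ell$) for the other.

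Your two ``easy'' lower-bound cases are also correct, but note that they are precisely the content of the second part of Myers' theorem quoted in the paper (the case $p(\Id_{\ell+1},\sigma)=0$ is literally that theorem, and the case $p(\Rev_{k+1},\sigma)=0$ is its mirror image). So you have not gained anything beyond what was already known; the entire difficulty sits in the ``mixed'' case, as you correctly say.

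One point of comparison worth making: the layered sub-problem that you propose to settle by ``a Lagrange-multiplier and compactness argument'' is exactly Theorem~\ref{thm:minmono} of the paper, and the paper \emph{does} prove it, but not by Lagrange multipliers alone. The argument is a merging/smoothing step: one shows that joining the two smallest layers never increases the Price polynomial $q_{N,\Id_\ell+\Rev_k}$, which forces the optimum down to $\ell-1$ layers, after which the minimisation of $\sum x_i^k$ on the simplex is elementary. Your Lagrange-multiplier sketch would identify the critical point but would not by itself rule out boundary or degenerate optima with many small layers; the merging inequality is what does that work. Also note, as the paper points out in its concluding remarks, that for $k\neq\ell$ the layered minimum $p''(\Id_{\ell+1}+\Rev_{k+1})=1/\ell^k$ can be \emph{strictly larger} than $\min\{1/k^\ell,1/\ell^k\}$, so the layered problem is genuinely different from the unrestricted one off the diagonal; your device of passing to complements of layered permutations to recover the other value is fine as an upper-bound construction, but it enlarges the class beyond layered and so does not bear on $p''$.

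Your final assessment --- that the unrestricted lower bound would require either a stability argument or a uniform flag-algebra certificate, and that neither is currently available --- is consistent with the state of the art as the paper describes it.
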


The minimum above is actually an upper bound to~$p'(\Id_{\ell+1}+\Rev_{k+1})$, obtained
by considering permutations that have only one of the monotone sequences counted.

Using the tool of Flag Algebras developed by \cite{Razborov:FlagAlgebras}, the case~$k=\ell=3$ was proved to
be true by \cite{BaloghHuLidickyPikhurkoUdvariVolec:MinimumMonotone4}.

We also remark that \cite{SamotijSudakov:NumberMonotoneSequences}
proved a version of this conjecture when~$n\leq k^2 + ck^{3/2}/\log k$ and~$k$
and~$c$ are sufficiently large.

As our second result, we solve the simpler version of this problem when we restrict
ourselves to the class of layered permutations, that is, we
compute~$p''(\Id_{\ell+1}+\Rev_{k+1})$.

\begin{theorem}\label{thm:minmono}
  If~$k\geq\ell\geq 3$, then we have~$p''(\Id_\ell + \Rev_k) = 1/(\ell-1)^{k-1}$. In
  particular, for~$m\geq 2$, we have~$p''(\Id_{m+1}+\Rev_{m+1}) = 1/m^m$.
\end{theorem}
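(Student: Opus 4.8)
\begin{proofsketch}[Proposed proof of Theorem~\ref{thm:minmono}]
Write~$e_j(\cdot)$ for the~$j$-th elementary symmetric polynomial. The plan starts from a direct
translation of the two pattern counts into the layer data: in a layered permutation~$\sigma$ with
layer sequence~$(\ell_1,\ldots,\ell_m)$ and length~$N=\sum_i\ell_i$, every increasing subsequence
meets each layer at most once while every decreasing subsequence lies inside a single layer, so
\begin{align*}
  \Lambda(\Id_\ell,\sigma)=e_\ell(\ell_1,\ldots,\ell_m),
  \qquad
  \Lambda(\Rev_k,\sigma)=\sum_{i=1}^m\binom{\ell_i}{k}.
\end{align*}
Hence, writing~$x_i=\ell_i/N$, one has $p(\Id_\ell,\sigma)=(1+o(1))\,\ell!\,e_\ell(x_1,\ldots,x_m)$ and
$p(\Rev_k,\sigma)=\binom{N}{k}^{-1}\sum_i\binom{\ell_i}{k}$ as~$N\to\infty$. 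The upper bound
$p''(\Id_\ell+\Rev_k)\le 1/(\ell-1)^{k-1}$ is then immediate: for~$\sigma_N$ the layered permutation
with~$\ell-1$ layers of sizes~$\lfloor N/(\ell-1)\rfloor$ or~$\lceil N/(\ell-1)\rceil$ we have
$\Lambda(\Id_\ell,\sigma_N)=0$ and $p(\Rev_k,\sigma_N)\to(\ell-1)\cdot(\ell-1)^{-k}=1/(\ell-1)^{k-1}$.

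The heart of the matter is the matching lower bound~$\liminf_N p_N''(\Id_\ell+\Rev_k)\ge 1/(\ell-1)^{k-1}$,
which I would reduce to a continuous optimization. Fix a small~$\epsilon>0$ and, given a large
layered~$\sigma$, call a layer \emph{big} if its length is at least~$\epsilon N$; there are at
most~$1/\epsilon$ big layers, of normalized lengths~$b_1,\ldots,b_r$, and we put~$\mu:=1-\sum_i b_i$.
The small layers contribute non-negatively to~$\Lambda(\Rev_k,\sigma)$, while each big layer
contributes $\binom{b_iN}{k}/\binom{N}{k}=(1+o(1))b_i^k$, so $p(\Rev_k,\sigma)\ge(1-o(1))\sum_i b_i^k$.
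For the identity count, every small layer having length~$<\epsilon N$ forces~$e_j$ of the small layers
to be at least~$(\mu N)^j(1-O(\epsilon/\mu))/j!$, and expanding~$e_\ell(x_1,\ldots,x_m)$ according to how
many coordinates come from small layers gives
$p(\Id_\ell,\sigma)\ge(1-o_\epsilon(1))\,\ell!\sum_{j=0}^\ell(\mu^j/j!)\,e_{\ell-j}(b_1,\ldots,b_r)$ (when~$\mu$ is itself of order~$\sqrt\epsilon$, one simply discards the small layers at a cost~$O(\sqrt\epsilon)$).
Letting~$N\to\infty$ and then~$\epsilon\to0$, the lower bound reduces to the claim that the infimum of
$\ell!\sum_{j=0}^\ell(\mu^j/j!)\,e_{\ell-j}(b_1,\ldots,b_r)+\sum_i b_i^k$, over all~$r\ge 0$,
all~$b_1,\ldots,b_r\ge 0$ and all~$\mu\ge 0$ with~$\sum_i b_i+\mu=1$, equals~$1/(\ell-1)^{k-1}$.

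To prove this I would argue in two steps. First, \emph{dust is superfluous}: replacing the mass~$\mu$
by one extra big layer of size~$\mu$ changes the objective by
$\ell!\sum_{j=2}^\ell(\mu^j/j!)\,e_{\ell-j}(b_1,\ldots,b_r)-\mu^k$, which is at least~$\mu^\ell-\mu^k\ge 0$
(bound below by the~$j=\ell$ term, where~$e_0=1$, and use~$2\le\ell\le k$ and~$0\le\mu\le 1$); hence the
infimum is attained with~$\mu=0$ and equals $\inf\{\ell!\,e_\ell(b_1,\ldots,b_r)+\sum_i b_i^k:b_i\ge 0,\ \sum_i b_i=1\}$.
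Second, I would split on the size~$r$ of the support of a near-optimal tuple: if~$r\le\ell-1$ then
$e_\ell(b_1,\ldots,b_r)=0$ and the power-mean inequality gives $\sum_i b_i^k\ge r^{1-k}\ge(\ell-1)^{1-k}$,
with equality exactly for~$\ell-1$ equal layers; and if~$r\ge\ell$, one shows the objective is still at
least~$(\ell-1)^{1-k}$ — either by checking that merging the two smallest layers never increases
$\ell!\,e_\ell(b_1,\ldots,b_r)+\sum_i b_i^k$, which lets one shrink the support down to~$\ell-1$ layers,
or by verifying that every interior critical point of $\ell!\,e_\ell+\sum_i b_i^k$ on the~$r$-dimensional
simplex (in particular the barycenter, where the value is $\prod_{i=0}^{\ell-1}(1-i/r)+r^{1-k}$) has
value at least~$(\ell-1)^{1-k}$. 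Together these give the required infimum, and hence the theorem; the
final assertion is the case~$\ell=k=m+1$.

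The main obstacle is this last step: controlling layered permutations that genuinely contain copies
of~$\Id_\ell$ — those with at least~$\ell$ substantial layers — and showing the resulting positive
identity density always outweighs the accompanying drop of the reverse density below~$1/(\ell-1)^{k-1}$.
Both the ``dust'' comparison and the barycenter estimate use the hypothesis~$k\ge\ell$ in an essential
way, and the genuinely delicate point is ruling out an asymmetric interior critical point of
$\ell!\,e_\ell+\sum_i b_i^k$ (one with two distinct positive layer lengths) having smaller value; I
expect this to require a careful but elementary estimate — equivalently, the merging argument above
carried out with non-crude bounds.
\end{proofsketch}
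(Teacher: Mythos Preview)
Your overall strategy is the paper's: pass to the continuous simplex problem, then show that merging the two smallest coordinates never increases the objective $\ell!\,e_\ell(x)+\sum_ix_i^k$, so the minimum is attained with at most $\ell-1$ positive coordinates, where $e_\ell$ vanishes and the power-mean inequality gives $(\ell-1)^{1-k}$. However, you leave the merging inequality itself unproved --- you flag it as ``the main obstacle'' and defer it to ``a careful but elementary estimate''. That inequality is precisely the content of the proof, so the proposal stops at the step that carries all the weight.

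Two remarks on how the paper fills this in. First, your $\epsilon$-big/small layer decomposition and the separate ``dust is superfluous'' step are unnecessary: Proposition~\ref{prop:Pricealgoprime} and Theorem~\ref{thm:priceconvergesprime} already give $p''(\Id_\ell+\Rev_k)=\lim_N\cU_{N,\Id_\ell+\Rev_k}$, where $\cU_{N,f}$ is the minimum of $q_{N,f}(x)=\ell!\,e_\ell(x)+\sum_i x_i^k$ over the $N$-simplex, so one may work with finite $N$ throughout and prove $\cU_{N}\geq\cU_{N-1}$ for $N\geq\ell$ directly. Second, the merging inequality is computed explicitly: with $x_1\le x_2\le\cdots\le x_N$,
\[
q_N(x_1,\ldots,x_N)-q_{N-1}(x_1{+}x_2,x_3,\ldots,x_N)
=\ell!\,x_1x_2\,e_{\ell-2}(x_3,\ldots,x_N)-\sum_{v=1}^{k-1}\binom{k}{v}x_1^vx_2^{k-v}.
\]
Since every $x_i\ge x_2$ for $i\ge 3$ one has $e_{\ell-2}(x_3,\ldots,x_N)\ge\binom{N-2}{\ell-2}x_2^{\ell-2}$, and since $x_1\le x_2$ one has $x_1^vx_2^{k-v}\le x_1x_2^{k-1}$, so the difference is at least $x_1x_2^{\ell-1}\bigl(\ell!\binom{N-2}{\ell-2}-(2^k-2)x_2^{k-\ell}\bigr)$. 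Because $x_2$ is the second smallest coordinate, $x_2\le 2/N$, and the bracket is then nonnegative for $N\ge\ell+1$ by a direct estimate; the boundary cases $N=\ell\ge4$ and $N=\ell=3$ require short separate checks. This is exactly the ``careful but elementary estimate'' you anticipated --- note in particular that it uses $x_2\le 2/N$, not merely convexity or a symmetric critical-point argument, which is why your alternative route via interior critical points would be harder to close.
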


The paper is organized as follows. In Section~\ref{sec:genprice}, we present Price's
Algorithm, one generalization of it, and the first result, which concerns permutation
packing. In Section~\ref{sec:min}, we present the second result, which concerns
minimization of the asymptotic density of monotone sequences in layered
permutations. Finally, in Section~\ref{sec:conc}, we conclude the text by presenting
some related work and conjectures.

\section{Generalizing Price's Algorithm}
\label{sec:genprice}

%\subsection{Price's Algorithm}

We start by presenting the original Price's Algorithm to compute a lower bound for
the packing density of a layered permutation. The idea behind the algorithm is to
note that the density of a layered permutation in an arbitrarily large layered
permutation with a bounded number of layers can be expressed by a polynomial.

Let~$\tau\in\fS_m$ be a layered permutation with layer
sequence~$(\ell_j)_{j=1}^k$. We define the \emph{Price Polynomial} of order~$n\in\naturals^*$
for~$\tau$ to be the real polynomial over~$n$ variables given by
\begin{align*}
  q_{n,\tau}(x_1,x_2,\ldots,x_n) & =
  m!\sum_{\{i_1,i_2,\ldots,i_k\}_<\subset[n]}\prod_{j=1}^k\frac{x_{i_j}^{\ell_j}}{\ell_j!}
  = \binom{m}{\ell_1,\ell_2,\ldots,\ell_k}
  \sum_{\{i_1,i_2,\ldots,i_k\}_<\subset[n]}\prod_{j=1}^kx_{i_j}^{\ell_j},
\end{align*}
where an empty sum is taken to have result~$0$ and an empty product is taken to have result~$1$.

We also define the Price Polynomial of order~$n\in\naturals^*$ for a linear
combination of layered permutations~$f = \sum_{i=1}^ka_i\tau_i$ as
\begin{align*}
  q_{n,f} & \equiv \sum_{i=1}^ka_iq_{n,\tau_i}.
\end{align*}

The \emph{Price Bound} of order~$n\in\naturals^*$ for a linear combination of layered
permutations~$f$ is the value
\begin{align*}
  \cL_{n,f} & = \max\left\{q_{n,f}(x_1,x_2,\ldots,x_n) : \sum_{j=1}^nx_j = 1
  \text{ and } \forall j\in[n], x_j\geq 0\right\}.
\end{align*}

\begin{remark*}
  This maximum exists since the set~$\{(x_1,x_2,\ldots,x_n)\in[0,1]^n : \sum_{j=1}^n
  x_j = 1\}$ is compact and~$q_{n,f}$ is continuous.
\end{remark*}

\begin{proposition}[\cite{Price:PackingDensitiesOfLayeredPatterns}]
  \label{prop:Pricealgo}
  If~$f\in\reals\fS$ is a conical combination of layered permutations, then, for
  every~$n\in\naturals^*$, we have~$\cL_{n,f}\leq\cL_{n+1,f}\leq p(f)$.
\end{proposition}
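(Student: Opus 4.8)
The plan is to establish three things: (i) that $q_{n,f}$, evaluated on any probability vector, is a lower bound for $p(f)$; (ii) that $\cL_{n,f} \le \cL_{n+1,f}$; and (iii) conclude $\cL_{n,f} \le p(f)$. The conceptual heart is the observation that $q_{n,\tau}(x_1,\dots,x_n)$ computes exactly the limiting density of the layered permutation $\tau$ inside a sequence of layered permutations whose $n$ layers have sizes growing in proportions $x_1 : x_2 : \cdots : x_n$.

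For (i), fix a probability vector $(x_1,\dots,x_n)$ with rational entries (the general case follows by continuity of $q_{n,f}$ and of $p_N$, or by a density argument), say $x_j = a_j/A$ with $a_j \in \naturals$ and $A = \sum_j a_j$. For a scaling parameter $t \in \naturals^*$, let $\sigma^{(t)}$ be the layered permutation with layer sequence $(t a_1, t a_2, \dots, t a_n)$, so $|\sigma^{(t)}| = tA$; layers of size $0$ are simply omitted. I claim that for each fixed layered $\tau \in \fS_m$ with layer sequence $(\ell_j)_{j=1}^k$,
\begin{align*}
  \lim_{t\to\infty} p(\tau, \sigma^{(t)}) & = m!\sum_{\{i_1,\dots,i_k\}_<\subset[n]}\prod_{j=1}^k\frac{x_{i_j}^{\ell_j}}{\ell_j!} = q_{n,\tau}(x_1,\dots,x_n).
\end{align*}
To see this, count the $m$-subsets $B$ of $[tA]$ with $\sigma^{(t)}[B]=\tau$: each such $B$ must split its points among some choice of $k$ of the $n$ layers, distributing $\ell_1,\dots,\ell_k$ points (in order) into layers indexed $i_1 < \cdots < i_k$, picking $\ell_j$ of the $ta_{i_j}$ positions in layer $i_j$. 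Thus $\Lambda(\tau,\sigma^{(t)}) = \sum_{\{i_1,\dots,i_k\}_< \subset [n]} \prod_{j=1}^k \binom{ta_{i_j}}{\ell_j}$, and dividing by $\binom{tA}{m}$ and letting $t\to\infty$, each $\binom{ta_{i_j}}{\ell_j}/( ta_{i_j})^{\ell_j} \to 1/\ell_j!$ while $\binom{tA}{m}/(tA)^m \to 1/m!$, which yields the displayed formula after collecting powers of $t$ (the total exponent $\sum_j \ell_j = m$ matches). Extending linearly over $f = \sum_i a_i \tau_i$ gives $\lim_{t\to\infty} p(f,\sigma^{(t)}) = q_{n,f}(x_1,\dots,x_n)$. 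Since $p(f) = \lim_{N\to\infty} p_N(f) \ge \limsup_t p(f, \sigma^{(t)})$ (each $\sigma^{(t)}$ is a legitimate competitor of length $tA$, and $p_N(f)$ is eventually nonincreasing by the earlier Lemma), we get $q_{n,f}(x_1,\dots,x_n) \le p(f)$ for all rational probability vectors, hence for all by continuity, hence $\cL_{n,f} \le p(f)$.

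For (ii): given an optimal probability vector $(x_1,\dots,x_n)$ achieving $\cL_{n,f}$, pad it with a zero coordinate to form $(x_1,\dots,x_n,0) \in \reals^{n+1}$. Every term in $q_{n+1,\tau}$ indexed by a subset $\{i_1,\dots,i_k\}_< \subset [n+1]$ that uses index $n+1$ contributes a factor $x_{n+1}^{\ell_k} = 0$ and vanishes, while the terms with $\{i_1,\dots,i_k\}_< \subset [n]$ reproduce exactly $q_{n,\tau}(x_1,\dots,x_n)$; summing over the $\tau_i$ with coefficients $a_i$ gives $q_{n+1,f}(x_1,\dots,x_n,0) = q_{n,f}(x_1,\dots,x_n) = \cL_{n,f}$. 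Since $(x_1,\dots,x_n,0)$ is a probability vector in $\reals^{n+1}$, it follows that $\cL_{n+1,f} \ge \cL_{n,f}$. Combining with $\cL_{n+1,f} \le p(f)$ from (i) finishes the proof.

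The main obstacle is step (i), specifically the asymptotic counting argument: one must check carefully that the combinatorial bijection between $m$-subsets inducing $\tau$ and the choices of $k$ layers-with-multiplicities is exact (it uses that $\tau$ is layered and that within a single decreasing layer any subset induces the reverse permutation of the right size, while points drawn from distinct layers are automatically in increasing relative order), and that the dominant-order term in $t$ is precisely the one written, with lower-order corrections vanishing in the limit. The conical (non-negative coefficient) hypothesis is used only insofar as it guarantees $q_{n,f}$ and its maximum behave well — in fact the inequality $\cL_{n,f} \le p(f)$ would fail for general sign patterns since then the relevant extremal quantity is not a simple maximum — but the monotonicity and the limit identification go through as above; I would double-check at the end that non-negativity of the $a_i$ is what lets us pass from "$\sigma^{(t)}$ is one competitor" to a genuine bound on $p(f)$ without needing to also control permutations that are not of the padded form.
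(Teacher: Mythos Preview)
Your proposal is correct and follows essentially the same approach as the paper: rational approximation of the probability vector, construction of a sequence of layered permutations with layers scaled by a parameter~$t$, and an asymptotic count showing that~$p(\tau,\sigma^{(t)})\to q_{n,\tau}(x)$. The paper does not actually prove Proposition~\ref{prop:Pricealgo} (it is cited from Price), but the very same construction and limiting computation appear in the paper's proof of the more general Theorem~\ref{thm:extpriceconverges}, so your argument matches the intended one; your padding-by-zero argument for~$\cL_{n,f}\leq\cL_{n+1,f}$ is also the standard one.
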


The proposition above suggests a simple algorithm of computing approximations for the
values~$\cL_{n,f}$ (which are maxima of polynomials) to get lower bounds
for~$p(f)$. This is why this is called ``Price's Algorithm''.

\begin{corollary}[of Theorem~\ref{thm:extlayer} and Proposition~\ref{prop:Pricealgo},
    \cite{Price:PackingDensitiesOfLayeredPatterns}]
  \label{cor:priceconverges}
  If~$f\in\reals\fS$ is a conical combination of layered permutations, then we have
  \begin{align*}
    \lim_{n\to\infty}\cL_{n,f} & = p(f).
  \end{align*}
\end{corollary}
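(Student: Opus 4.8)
The plan is to prove the two inequalities $\lim_{n\to\infty}\cL_{n,f}\le p(f)$ and $\lim_{n\to\infty}\cL_{n,f}\ge p(f)$ separately. The first is immediate from Proposition~\ref{prop:Pricealgo}, which says that $(\cL_{n,f})_{n\in\naturals^*}$ is non-decreasing and bounded above by $p(f)$; hence the limit $L:=\lim_{n\to\infty}\cL_{n,f}$ exists and $L\le p(f)$. For the reverse inequality I would combine Theorem~\ref{thm:extlayer} with a direct estimate: since $p(f)=\lim_{N\to\infty}p_N(f)$, it suffices to show $p_N(f)\le L+o(1)$ as $N\to\infty$.

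To that end, fix a large $N$ and, using Theorem~\ref{thm:extlayer}, pick a \emph{layered} permutation $\sigma\in\Ext_N(f)$, say with layer sequence $(s_1,\ldots,s_n)$, so that $s_1+\cdots+s_n=N$; set $x=(s_1/N,\ldots,s_n/N)$, a point of the simplex. First I would record the elementary fact that every subpermutation of a layered permutation is layered, its layer sequence being the list of nonzero sizes of the intersections of the chosen set with the layers of $\sigma$. Consequently a layered permutation $\tau$ with layer sequence $(\ell_1,\ldots,\ell_k)$ is induced in $\sigma$ precisely by selecting $k$ layers $j_1<\cdots<j_k$ of $\sigma$ and then $\ell_r$ elements inside the $j_r$-th of them, so that $\Lambda(\tau,\sigma)=\sum_{\{j_1,\ldots,j_k\}_<\subset[n]}\prod_{r=1}^k\binom{s_{j_r}}{\ell_r}$. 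Now, using the crude bounds $\binom{s}{\ell}\le s^\ell/\ell!$ termwise and $\binom{N}{m}^{-1}\le(1+c_m/N)\,m!/N^m$ with $m=\lvert\tau\rvert$ fixed and $N$ large, together with the fact that $q_{n,\tau}$ is bounded on the simplex by $\lvert\tau\rvert!/(k!\prod_{r=1}^k\ell_r!)$ independently of $n$ (which follows from $\sum_{\{j_1,\ldots,j_k\}_<}\prod_r x_{j_r}\le 1/k!$ when $\sum_j x_j=1$ and all $x_j\ge0$), one obtains $p(\tau,\sigma)\le q_{n,\tau}(x)+C_\tau/N$ with $C_\tau$ depending only on $\tau$. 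Writing $f=\sum_i a_i\tau_i$ with all $a_i\ge0$ — here is the only place conicality of $f$ is used — and taking that same conical combination of these inequalities gives $p_N(f)=p(f,\sigma)\le q_{n,f}(x)+C/N\le\cL_{n,f}+C/N\le L+C/N$.

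Letting $N\to\infty$ yields $p(f)\le L$, which together with the first inequality proves $\lim_{n\to\infty}\cL_{n,f}=p(f)$. I expect two points to require a little care. The first is pinning down the closed form for $\Lambda(\tau,\sigma)$, i.e.\ making the structure of subpermutations of layered permutations precise; this is the one genuinely combinatorial step. The second, and perhaps the more important, is checking that the error term $C/N$ is uniform in the number of layers $n$ of $\sigma$, which a priori grows with $N$: the key observation here is that $\binom{s}{\ell}\le s^\ell/\ell!$ is an \emph{exact} inequality, so small layers of $\sigma$ (even of size $1$) cause no loss, and the only genuinely $N$-dependent correction comes from comparing $\binom{N}{m}$ with $N^m/m!$ for the fixed bounded value $m=\max_i\lvert\tau_i\rvert$.
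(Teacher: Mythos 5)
Your proposal is correct and takes essentially the approach the paper intends: the easy inequality $L\le p(f)$ comes from Proposition~\ref{prop:Pricealgo}, and the reverse inequality $p_N(f)\le\cL_{n,f}+O(1/N)$ is obtained by evaluating the Price polynomial at the normalized layer-size vector of a layered extremal permutation supplied by Theorem~\ref{thm:extlayer}. The paper states the result as a cited corollary without writing out the argument, but the two ingredients it names are exactly the two you use, and your care about the uniformity of the $C/N$ error term in $n$ (via the exact bound $\binom{s}{\ell}\le s^{\ell}/\ell!$ together with $\binom{N}{m}^{-1}N^m/m!=1+O(1/N)$) is the right point to check.
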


We now present some definitions that will be useful for generalizing Price's
Algorithm to consider antilayers.

%% We start with some definitions that will be used along of this paper. A
%% \emph{macro-layer} of a layered permutation~$\sigma$ is a layer of length greater or
%% equal to~$2$. A \emph{block} of a layered permutation is either a macro-layer or an
%% antilayer.

Recall that an antilayer is a maximal contiguous non-empty sequence of layers of
length~$1$ and that a block is either a layer of length at least~$2$ or an
antilayer. We formalize the definition of the \emph{block sequence} of a layered
permutation~$\sigma$ as the unique sequence~$(\ell_i,\xi_i)_{i=1}^k$ of elements
of~$\naturals^*\times\{0,1\}$ such that, for every~$i\in[k]$, we have
\begin{align*}
  \left\{\sum_{j=1}^{i-1}\ell_j + t : t\in[\ell_i]\right\}\text{ is }
  \begin{dcases*}
    \text{a layer and~$\ell_i\geq 2$}, & if~$\xi_i = 0$;\\
    \text{an antilayer}, & if~$\xi_i = 1$.
  \end{dcases*}
\end{align*}
In other words, the value~$\xi_i=1$ corresponds to the ``hat'' in the notation of
block sequence defined previously.

%% The sequence~$(\xi_i)_{i=1}^k$ of the second coordinates of this representation is
%% called the \emph{species} of the layered permutation. For instance, the
%% permutation~$(3,\widehat{4},2)$ has species~$(0,1,0)$.

We will also need some slightly different notions from the above. An \emph{antilayeroid} of a layered
permutation~$\sigma$ is a contiguous non-empty subsequence of an antilayer. A \emph{quasi-block} of a layered
permutation~$\sigma$ is either a layer of length at least~$2$ or an antilayeroid. A \emph{quasi-block
  sequence} or \emph{quasi-block decomposition} of a layered permutation~$\sigma$ is a
sequence~$(\ell_i,\xi_i)_{i=1}^k$ of elements of~$\naturals^*\times\{0,1\}$ such that, for every~$i\in[k]$, we
have
\begin{align*}
  \left\{\sum_{j=1}^{i-1}\ell_j + t : t\in[l_i]\right\}\text{ is }
  \begin{dcases*}
    \text{a layer and~$\ell_i\geq 2$}, & if~$\xi_i = 0$;\\
    \text{an antilayeroid}, & if~$\xi_i = 1$.
  \end{dcases*}
\end{align*}

Informally, a quasi-block decomposition of~$\sigma$ is obtained from the block
decomposition by splitting antilayers of~$\sigma$ into any number of (pairwise
disjoint) antilayeroids.

Note that, in the quasi-block decomposition, we can have two consecutive
antilayeroids (the analogous situation for block decompositions never happens).

We extend the ``hat'' notation of block decompositions to quasi-block
decompositions by using a ``hat'' to indicate which quasi-blocks are antilayeroids.

We denote the set of quasi-block decompositions of a layered permutation
by~$\cQ_\sigma$.

As an example, for~$\sigma=(321457689)$, we have
\begin{align*}
  \cQ_\sigma & = \{(3,\widehat{2},2,\widehat{2}),
  (3,\widehat{2},2,\widehat{1},\widehat{1}),
  (3,\widehat{1},\widehat{1},2,\widehat{2}),
  (3,\widehat{1},\widehat{1},2,\widehat{1},\widehat{1})\}.
\end{align*}

%% The unique quasi-block decomposition that is obtained from the layer decomposition
%% of~$\sigma$ by adding a second coordinate accordingly is called \emph{layer
%%   quasi-block decomposition} of~$\sigma$.

Note that, the block decomposition of~$\sigma$ is also a quasi-block decomposition
of~$\sigma$. Note also that quasi-block decompositions are not unique (unless every
antilayer has length~$1$).

The usefulness of this decomposition arises when we consider an occurrence of a
layered permutation~$\tau$ in another layered permutation~$\sigma$, because if two
points of~$\tau$ occur in the same block of~$\sigma$, then there must be a
quasi-block of~$\tau$ that has both points (note that this is no longer true if we
replace quasi-block by block since it is possible to split an antilayer of~$\tau$ to fit into two or more blocks of~$\sigma$).
Within this context of occurrences, we can define the concept of natural decomposition as follows.

Let~$A = \{i_1,i_2,\ldots,i_n\}_<$ be an occurrence of a layered
permutation~$\tau$ in another layered permutation~$\sigma$. The \emph{natural
  decomposition} induced by~$A$ and~$\sigma$ in~$\tau$ is the unique quasi-block
decomposition (denoted by~$\cN(A,\sigma)$) of~$\tau$ such that points of~$\tau$
are in the same quasi-block of~$\cN(A,\sigma)$ if and only if the corresponding
indices of~$A$ in the same block of~$\sigma$.

As an example, if~$\sigma = (21346587)$, $\tau = (21345)$, and~$A = \{1,2,3,4,8\}$,
then~$\cN(A,\sigma) = (2,\widehat{2},\widehat{1})$.

An easy way to obtain the natural decomposition of~$\tau$ from its occurrence~$A$
in~$\sigma$ and the block decomposition~$(\ell_i',\xi_i')_{i=1}^k$ of~$\sigma$ is to
first take~$\ell_i$ as the number of points of~$A$ in the~$i$-th block of~$\sigma$
and~$\xi_i = \xi_i'$ and then remove any zero-length quasi-blocks that arose in the
process and change any~$\xi_i$ of the layers of length~$1$ to~$1$.

In the example given, this process yields
\begin{align*}
  (2,\widehat{2},2,2) & \longrightarrow (2,\widehat{2},0,1)
  \longrightarrow (2,\widehat{2},\widehat{1}).
\end{align*}

The uniqueness of natural decompositions allows us to partition the occurrences
according to the natural decompositions that they induce.

%\section{Generalizing Price's Algorithm}

Although ``Price's Algorithm'' always converges to the packing density, when we have
a layered permutation with an antilayer, the natural way to fit it in another layered
permutation is to put it in another antilayer. Since ``Price's Algorithm'' uses only
layers, if the extremal permutation has an antilayer, the algorithm will gradually
produce it through a sequence of small variables. With this in mind, we generalize
``Price's Algorithm'' to consider antilayers. The intuition is to construct
polynomials analogous to Price Polynomials but alternating antilayers and layers in
the ``arbitrarily large permutation'' representing the polynomial (\textit{i.e.}, variables
with odd subscripts correspond to antilayers and variables with even subscripts
correspond to layers).

\begin{definition}
  Let~$\tau\in\fS_m$ be a layered permutation of length~$m$.

  We define the \emph{Extended Price Polynomial} of order~$n\in\naturals^*$ for~$\tau$ to
  be the real polynomial over~$2n$ variables given by
  \begin{align*}
    g_{n,\tau}(x_1,x_2,\ldots,x_{2n}) & =
    m!\sum_{(\ell_j,\xi_j)_{j=1}^k\in\cQ_\tau}\sum_{\{i_1,i_2,\ldots,i_k\}_<\subset[2n]}
    \prod_{j=1}^k\frac{x_{i_j}^{\ell_j}}{\ell_j!}\One_{\{i_j\bmod 2 = \xi_j \text{ or } \ell_j = 1\}},
  \end{align*}
  where~$\One$ denotes the indicator function.

  We also define the Price Polynomial of order~$n\in\naturals^*$ for a linear
  combination of layered permutations~$f = \sum_{i=1}^ka_i\tau_i$ as
  \begin{align*}
    g_{n,f} & \equiv \sum_{i=1}^ka_ig_{n,\tau_i}.
  \end{align*}
\end{definition}

Note that the indicator function is responsible for making sure that variables with
odd subscripts correspond to antilayers (which can contain only antilayeroids
of~$\tau$) and that variables with even subscripts correspond to layers (which can
contain only layers of~$\tau$).

The Extended Price Bound defined below has yet another parameter which is responsible
for forcing some of the antilayers of the ``arbitrarily large permutation'' to have
length zero.

\begin{definition}
  Let~$f\in\reals\fS$ be a linear combination of layered permutations, let~$n\in\naturals^*$
  be a positive integer, and let~$W\subset[n]$.

  The \emph{Extended Price Bound} of order~$n$ relative to~$W$ for~$f$ is the value
  \begin{align*}
    \cL_{n,W,f} & = \max\left\{g_{n,f}(x_1,x_2,\ldots,x_{2n}) :
    \sum_{j=1}^{2n}x_j = 1\text{ and }
    \forall j\in[n], x_j\geq 0 \text{ and } \forall j\in W, x_{2j-1}=0\right\}.
  \end{align*}
\end{definition}

\begin{remark*}
  Once again this maximum exists by a compactness argument. Furthermore, note
  that~$\cL_{n,f} = \cL_{n,[n],f}$.
\end{remark*}

Let us now prove the convergence of the analogous ``Extended Price's Algorithm''.

\begin{theorem}\label{thm:extpriceconverges}
  If~$f=\sum_{t=1}^ka_t\tau_t\in\reals\fS$ is a conical combination of layered
  permutations and~$(W_n)_{n\in\naturals^*}$ is a sequence of sets such
  that~$W_n\subset[n]$ for every~$n\in\naturals^*$, then we have
  \begin{align*}
    \lim_{n\to\infty}\cL_{n,W_n,f} & = p(f).
  \end{align*}
\end{theorem}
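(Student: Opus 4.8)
The plan is to prove the two inequalities $\liminf_{n\to\infty}\cL_{n,W_n,f}\geq p(f)$ and $\cL_{n,W,f}\leq p(f)$ (the latter for \emph{every} $n\in\naturals^*$ and \emph{every} $W\subseteq[n]$) separately; together they give $\lim_{n\to\infty}\cL_{n,W_n,f}=p(f)$.

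For the lower bound, fix $n$ and note that the polynomial $g_{n,f}$ does not depend on $W$, while the feasible region $\{x\in\reals^{2n}:x\geq 0,\ \sum_jx_j=1,\ x_{2j-1}=0\ \forall j\in W\}$ only shrinks as $W$ grows. Since $W_n\subseteq[n]$, this gives $\cL_{n,W_n,f}\geq\cL_{n,[n],f}=\cL_{n,f}$, the last equality being the remark following the definition of the Extended Price Bound. As $f$ is conical, Corollary~\ref{cor:priceconverges} yields $\cL_{n,f}\to p(f)$, and hence $\liminf_{n\to\infty}\cL_{n,W_n,f}\geq p(f)$.

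For the upper bound, fix $n$ and $W$ and pick $x^*\in\reals^{2n}$ attaining $\cL_{n,W,f}=g_{n,f}(x^*)$. For each $m\in\naturals^*$ I would build a layered permutation $\sigma_m\in\fS_m$ whose $i$-th ``formal block'' ($i\in[2n]$) has size $n_i=\lfloor x_i^*m\rfloor$ and is a (maximal) decreasing run when $i$ is even and an increasing run when $i$ is odd, the $O(n)$ leftover positions being placed in some block with $x_i^*>0$. Blocks of size $0$ or $1$, and runs of equal type that become adjacent because an intervening formal block is empty, are allowed; the result is still a bona fide layered permutation. The heart of the matter is the identity, valid for every layered permutation $\tau$,
\begin{align*}
  \Lambda(\tau,\sigma_m) &= \sum_{(\ell_j,\xi_j)_{j=1}^{k}\in\cQ_\tau}\ \ \sum_{\substack{\{i_1,\ldots,i_{k}\}_<\subset[2n]\\ \forall j\,:\ i_j\bmod 2=\xi_j\ \text{or}\ \ell_j=1}}\ \prod_{j=1}^{k}\binom{n_{i_j}}{\ell_j},
\end{align*}
obtained by classifying an occurrence $A$ of $\tau$ according to its natural decomposition $\cN(A,\sigma_m)\in\cQ_\tau$ (so each quasi-block of $\tau$ falls into a single block of $\sigma_m$ of compatible type, or is a single point), and then observing that once the target blocks $i_1<\cdots<i_{k}$ are fixed, \emph{any} choice of $\ell_j$ of the $n_{i_j}$ positions in block $i_j$ produces the correct pattern inside that block (all cross-block comparisons in a layered permutation being settled by block order). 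Dividing by $\binom{m}{\lvert\tau\rvert}$ and letting $m\to\infty$ term by term — the number of summands depends only on $\tau$ and $n$, and $\binom{m}{\lvert\tau\rvert}^{-1}\prod_j\binom{n_{i_j}}{\ell_j}\to\lvert\tau\rvert!\prod_j(x_{i_j}^*)^{\ell_j}/\ell_j!$ since $\sum_j\ell_j=\lvert\tau\rvert$ and $n_i/m\to x_i^*$ — gives $p(\tau,\sigma_m)\to g_{n,\tau}(x^*)$ for every $\tau$, hence by linearity $p(f,\sigma_m)\to g_{n,f}(x^*)=\cL_{n,W,f}$. Since $p(f,\sigma_m)\leq p_m(f)$ and $p_m(f)\to p(f)$, we conclude $\cL_{n,W,f}\leq p(f)$.

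The step I expect to demand the most care is that identity in the presence of degeneracies: when a formal block of $\sigma_m$ has size $0$ (forced, for instance, by $W$), two increasing runs merge into one longer increasing run of $\sigma_m$, so the occurrences that the right-hand side spreads over several formal blocks actually live inside a single block of $\sigma_m$. One must check that summing over the \emph{finer} quasi-block decompositions of $\tau$ compensates exactly for this merging; concretely this reduces to Vandermonde-type identities such as $\sum_{\ell=0}^{p}\binom{a}{\ell}\binom{b}{p-\ell}=\binom{a+b}{p}$, and it is precisely what the quasi-block and natural-decomposition formalism developed earlier in this section is designed to encode. The remaining points (the rounding of $x_i^*m$, the asymptotics of the binomial coefficients, and the termwise passage to the limit) are routine.
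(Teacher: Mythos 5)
Your argument is correct and follows the same two-part skeleton as the paper's proof, but a few details differ in ways worth noting. For the lower bound you invoke monotonicity of the constraint set in~$W$ to reduce to $\cL_{n,[n],f}=\cL_{n,f}$; the paper instead plugs the point $(0,x_1,0,x_2,\ldots,0,x_n)$ directly into $g_{n,f}$ to recover $q_{n,f}$, which amounts to the same thing (your phrasing is arguably cleaner). For the upper bound, the paper deliberately sidesteps the degeneracy you flag: rather than flooring the optimal point, it fixes $\epsilon>0$, picks a \emph{rational} $\epsilon$-near-optimal point $b/N$ with $b_{2j}\neq 0$ for all $j\in[n]$ and $b_{2j-1}=0$ for $j\in W_n$, and builds $\sigma_m\in\fS_{mN}$ with block sizes exactly $mb_i$. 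Since every formal layer has positive size, two formal antilayers can never become adjacent, so formal blocks coincide with actual blocks of $\sigma_m$ and the counting identity is immediate via $\cN(A,\sigma_m)$ — no merging, no rounding error, no Vandermonde-type compensation. Your flooring construction works too, but then the classification must be by ``which formal block contains each point'' rather than by $\cN(A,\sigma_m)$ (which only sees the merged blocks of $\sigma_m$); with that reading the identity is again a direct bijection and the Vandermonde step you anticipate is unnecessary. In short: same strategy, but the paper's choice of rational approximants with nonzero even coordinates makes the degenerate cases disappear rather than having to be compensated.
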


\begin{proof}
  Note first that if~$x_1,x_2,\ldots,x_n\geq 0$ are such that~$\sum_{j=1}^nx_j=1$
  and~$q_{n,f}(x_1,x_2,\ldots,x_n) = \cL_{n,f}$, then we have
  \begin{align*}
    \cL_{n,W_n,f} & \geq g_{n,f}(0,x_1,0,x_2,\ldots,0,x_n) =
    q_{n,f}(x_1,x_2,\ldots,x_n) = \cL_{n,f}.
  \end{align*}

  Therefore, by Corollary~\ref{cor:priceconverges}, we
  have~$\liminf_{n\to\infty}\cL_{n,W_n,f}\geq p(f)$.

  Now let us prove the other inequality with~$\limsup$. Fix~$n\in\naturals^*$ and let us
  prove that~$\cL_{n,W_n,f}\leq p(f)$.

  Let~$\epsilon>0$ be an arbitrary positive real number
  and~$b_1,b_2,\ldots,b_{2n},N\in\naturals$ with~$N\neq 0$ be such that~$\sum_{j=1}^{2n}b_j=N$,
  $b_{2j-1} = 0$ for every~$j\in W_n$, $b_{2j} \neq 0$ for every~$j\in[n]$ and
  \begin{align*}
    g_{n,f}\left(\frac{b_1}{N},\frac{b_2}{N},\ldots,\frac{b_{2n}}{N}\right) & \geq
    \cL_{n,W_n,f} - \epsilon.
  \end{align*}

  Now, for every~$m\in\naturals^*$, let~$\sigma_m\in\fS_{mN}$ be the layered permutation of
  length~$mN$ and with block
  sequence~$(\widehat{mb_1},mb_2,\widehat{mb_3},mb_4,\ldots,\widehat{mb_{2n-1}},mb_{2n})$
  (if any of these numbers is zero, we remove it from the sequence to
  form~$\sigma_m$).

  Now, for every~$t\in[k]$, we count the occurrences of~$\tau_t$ in~$\sigma_m$
  according to the natural decomposition that they induce in~$\tau_t$. So we have
  \begin{align*}
    p(\tau_t,\sigma_m)
    & =
    \binom{mN}{|\tau_t|}^{-1}\sum_{(\ell_j,\xi_j)_{j=1}^k\in\cQ_{\tau_t}}\;
    \sum_{\{i_1,\ldots,i_k\}_<\subset[2n]}\;
    \prod_{j=1}^k\binom{mb_{i_j}}{\ell_j}\One_{\{i_j \bmod 2 =  \xi_j \text{ or }\ell_j = 1\}}
    \\
    & \sim \frac{|\tau_t|!}{(mN)^{|\tau_t|}}
    \sum_{(\ell_j,\xi_j)_{j=1}^k\in\cQ_{\tau_t}}\;
    \sum_{\{i_1,\ldots,i_k\}_<\subset[2n]}\;
    \prod_{j=1}^k\frac{(mb_{i_j})^{\ell_j}}{\ell_j!}
    \One_{\{i_j \bmod 2 = \xi_j \text{ or }\ell_j = 1\}}
    \\
    & =
    \left\lvert\tau_t\right\rvert!
    \sum_{(\ell_j,\xi_j)_{j=1}^k\in\cQ_{\tau_t}}\;
    \sum_{\{i_1,\ldots,i_k\}_<\subset[2n]}\;
    \prod_{j=1}^k\frac{(b_{i_j}/N)^{\ell_j}}{\ell_j!}
    \One_{\{i_j \bmod 2 = \xi_j \text{ or }\ell_j = 1\}}.
    \\
    & = g_{n,\tau_t}\left(\frac{b_1}{N},\frac{b_2}{N},\ldots,\frac{b_{2n}}{N}\right),
  \end{align*}
  where~$\sim$ means that the ratio between both sides goes to~$1$ as~$m$ goes
  to~$\infty$ and the first equality after~$\sim$ follows from the fact
  that~$\sum_{j=1}^k\ell_j = \left\lvert\tau_t\right\rvert$ for every quasi-block
  decomposition~$(\ell_j,\xi_j)_{j=1}^k$ of~$\tau_t$.

  Therefore, we have
  \begin{align*}
    p(f,\sigma_m) & = \sum_{t=1}^ka_tp(\tau_t,\sigma_m)
    \\
    & \sim \sum_{t=1}^ka_t
    g_{n,\tau_t}\left(\frac{b_1}{N},\frac{b_2}{N},\ldots,\frac{b_{2n}}{N}\right)
    \\
    & = g_{n,f}\left(\frac{b_1}{N},\frac{b_2}{N},\ldots,\frac{b_{2n}}{N}\right)
    \\
    & \geq \cL_{n,W_n,f} - \epsilon.
  \end{align*}

  So, for every~$\epsilon>0$, we have
  \begin{align*}
    p(f) & \geq \lim_{m\to\infty}p(f,\sigma_m) \geq \cL_{n,W_n,f}-\epsilon.
  \end{align*}

  Since~$\epsilon$ is arbitrary, we have~$\cL_{n,W_n,f}\leq p(f)$ for
  every~$n\in\naturals^*$, hence
  \begin{align*}
    p(f) & \geq \limsup_{n\to\infty}\cL_{n,W_n,f}.
  \end{align*}

  Therefore~$\lim_{n\to\infty}\cL_{n,W_n,f} = p(f)$.
\end{proof}

%% Again, the intuition behind Extended Price Polynomials is that they represent
%% exactly arbitrarily large layered permutations of species~$(1,0,1,0,\ldots,1,0)$ with
%% block lengths proportional to the variables of the polynomial.

\subsection{Using generalizations}
\label{sec:usinggeneralizations}

We now introduce some notation to help in the proof of Theorem~\ref{thm:smallanti}.

\begin{notation}
  Let~$T\subset\integers$ be a set of integers and~$(x_t)_{t\in T}$ be a sequence indexed
  by elements of~$T$. Let also~$I = \{i_1,\ldots,i_k\}_<\subset T$ be a subset
  of~$T$.
  We denote by~$x_I$ the (ordered) sequence~$(x_{i_1},\ldots,x_{i_k})$.

  Furthermore, if~$x = (x_1,x_2,\ldots,x_k)$ and~$y = (y_1,y_2,\ldots,y_k)$ are
  sequences of non-negative real numbers of the same length, then we denote by~$x^y$
  the value
  \begin{align*}
    \prod_{i=1}^kx_i^{y_i},
  \end{align*}
  where~$0^0=1$.

  We abuse the notation sometimes by using a set~$T\subset\integers$ to denote the
  sequence of its elements in increasing order indexed by~$\{1,2,\ldots,|T|\}$.

  For instance, if~$T = \{1,3,6,7\}$, we have~$T_{\{3\}} = 6$.
\end{notation}

We need a straightforward technical result, which can be proved by induction in~$k$.

\begin{lemma}\label{lem:incmax}
  If~$x = (x_1,x_2,\ldots,x_k)$ and~$y = (y_1,y_2,\ldots,y_k)$ are two non-decreasing
  sequences of non-negative real numbers of the same length
  and~$z=(z_1,z_2,\ldots,z_k)$ is a permutation of the sequence~$y$, then we have
  \begin{align*}
    x^z\leq x^y.
  \end{align*}
\end{lemma}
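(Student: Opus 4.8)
The plan is to prove the inequality by induction on~$k$, which amounts to re-deriving the relevant instance of the rearrangement inequality while paying attention to the convention~$0^0 = 1$. The base case~$k = 1$ is immediate, since then necessarily~$z = y$.

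For the inductive step, I would first dispose of the degenerate cases. If~$y_k = 0$, then (as~$y$ is non-decreasing and non-negative) every~$y_i$ is~$0$, so~$x^z = x^y = 1$ and there is nothing to prove. If there is an index~$i$ with~$x_i = 0$ and~$z_i > 0$, then the factor~$x_i^{z_i} = 0$ forces~$x^z = 0 \le x^y$, and again we are done. So from now on we may assume~$y_k > 0$ and that~$z_i = 0$ whenever~$x_i = 0$.

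Next, choose an index~$p \in [k]$ with~$z_p = y_k$ (i.e.\ $p$ is a position carrying the largest exponent), and let~$z'$ be the sequence obtained from~$z$ by exchanging the entries in positions~$p$ and~$k$, so that~$z'_k = y_k$, $z'_p = z_k$, and~$z'_i = z_i$ for all other~$i$. If~$p = k$ then~$z' = z$; otherwise~$p < k$, hence~$x_p \le x_k$ because~$x$ is non-decreasing, and in fact~$x_p > 0$ because~$z_p = y_k > 0$ and our reduction forbids a positive exponent over a zero base. Since also~$y_k - z_k \ge 0$ (as~$y_k = \max_i y_i \ge z_k$), we get
\[
  \frac{x^{z'}}{x^{z}} = \frac{x_p^{z_k}\, x_k^{y_k}}{x_p^{y_k}\, x_k^{z_k}} = \left(\frac{x_k}{x_p}\right)^{y_k - z_k} \ge 1 ,
\]
so~$x^z \le x^{z'}$ in every case.

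Finally, $(z'_1, \ldots, z'_{k-1})$ is a permutation of~$(y_1, \ldots, y_{k-1})$, and both truncated sequences~$(x_1, \ldots, x_{k-1})$ and~$(y_1, \ldots, y_{k-1})$ remain non-decreasing and non-negative, so the induction hypothesis yields~$\prod_{i=1}^{k-1} x_i^{z'_i} \le \prod_{i=1}^{k-1} x_i^{y_i}$. Multiplying both sides by~$x_k^{y_k} = x_k^{z'_k}$ gives~$x^{z'} \le x^{y}$, and chaining this with~$x^z \le x^{z'}$ completes the induction. I do not expect a real obstacle: the only point requiring care is the interaction of zero bases with the convention~$0^0 = 1$, which is precisely why the two degenerate cases are peeled off at the start; after that the argument is the textbook ``sort by swaps'' proof of the rearrangement inequality. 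An alternative would be to take logarithms and invoke the classical rearrangement inequality directly, but then the zero-base bookkeeping has to be done anyway, so the self-contained induction above seems cleanest.
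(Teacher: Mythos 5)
Your proof is correct and follows the approach the paper indicates: the paper leaves the lemma unproved, remarking only that it ``can be proved by induction in~$k$'', and your argument is precisely that induction. The only delicate point is the interaction of zero bases with the convention~$0^0=1$, which you handle cleanly by peeling off the two degenerate cases before the swap step.
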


Let us now fix some notation that will be used along the proof of
Theorem~\ref{thm:smallanti} and some auxiliary lemmas. This proof is based on proofs
by \citet[Theorem~3.3]{Hasto:PackingOtherLayered} (see Theorem~\ref{thm:hasto})
and \citet[Theorem~3.8]{Warren:OptimalPackingBehavior2Block}.

Let~$\sigma = (\widehat{a}, \ell_1, \ldots, \ell_k)$ a layered permutation with~$2
\leq a \leq \ell_1 \leq \cdots \leq \ell_k$, let~$\ell = a+\sum_{i=1}^k\ell_i$ be the
length of~$\sigma$ and let~$\ell_i = 1$ for every~$i\leq 0$.

For every~$N\in\naturals^*$, define the polynomial~$p_N\in\reals[y,x_1,x_2,\ldots,x_N]$ by
letting
\begin{align*}
  p_N(y,x_1,x_2,\ldots,x_N) & =
  \frac{\prod_{i=1}^k\ell_i!}{\ell!}g_{N,\sigma}(y,x_1,0,x_2,0,x_3,\ldots,0,x_N)
  \\
  & = \sum_{u=0}^a\frac{y^u}{u!}\sum_{\{i_1,i_2,\ldots,i_{a-u+k}\}_<\subset[N]}
  \left(\prod_{j=1}^{a-u+k}x_{i_j}^{\ell_{j-a+u}}\right)
  \\
  & = \sum_{u=0}^a\frac{y^u}{u!}
  \!\!\!\!\sum_{I\in\binom{[N]}{a-u+k}}\!\!\!\!x_I^{\ell_{[-a+u+1..k]}},
\end{align*}
where~$[-a+u+1..k]$ denotes the set~$\{-a+u+1,-a+u+2,\ldots,k\}$.

Note that
\begin{align*}
  \cL_{N,[N]\setminus\{1\},\sigma}
  = \max\Biggl\{
  & \frac{\ell!}{\prod_{i=1}^k\ell_i!}p_N(y,x_1,\ldots,x_N) :
  y + \sum_{j=1}^Nx_j = 1
  \\
  & \text{ and } \forall j\in[N],x_j\geq 0
  \text{ and } y\geq 0\Biggr\}.
\end{align*}

Furthermore, by Theorem~\ref{thm:extpriceconverges}, we have
\begin{align*}
  p(\sigma) & = \lim_{N\to\infty}\cL_{N,[N]\setminus\{1\},\sigma}.
\end{align*}

Our objective is to prove
that~$\cL_{N,[N]\setminus\{1\},\sigma}\leq\cL_{N-1,[N-1]\setminus\{1\},\sigma}$
whenever~$N>k$.

To do this, we first show that we can find an optimal point~$(y,x)$
for~$p_N$ that has some ``good'' properties.  Lemma~\ref{lem:xnondec} below shows
that we that there is an optimal~$(y,x)$ such that the coordinates of~$x$ are
increasing.  To show this, we prove that if~$x$ does not have increasing coordinnates
and we switch the position of two consecutive decreasing coordinates, then the value
of~$p_N$ does not decrease.

\begin{lemma}\label{lem:xnondec}
  For every~$N\geq k$ and~$i_0\in[N-1]$, we have
  \begin{align*}
    p_N(y,x_1,x_2,\ldots,x_N) & \leq
    p_N(y,x_1,x_2,\ldots,x_{i_0-1},x_{i_0+1},x_{i_0},x_{i_0+2},\ldots,x_N),
  \end{align*}
  whenever~$y,x_1,x_2,\ldots,x_N\geq 0$ and~$x_{i_0}\geq x_{i_0+1}$.
\end{lemma}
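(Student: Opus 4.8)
The plan is to expand $p_N$ by isolating the roles of the variables $x_{i_0}$ and $x_{i_0+1}$, since these are the only two coordinates being swapped. Writing $a = x_{i_0}$ and $b = x_{i_0+1}$ with $a \geq b$, I would group the terms in the sum
\[
p_N(y,x_1,\ldots,x_N) = \sum_{u=0}^a \frac{y^u}{u!} \sum_{I\in\binom{[N]}{a-u+k}} x_I^{\ell_{[-a+u+1..k]}}
\]
according to how a given $I$ intersects $\{i_0,i_0+1\}$: either $I$ contains neither, contains $i_0$ only, contains $i_0+1$ only, or contains both. The first class of terms is unchanged by the swap, and the fourth class is also unchanged since $\{i_0,i_0+1\}\subset I$ forces both variables to appear — though with possibly different exponents, so one must check. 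The real content is in comparing the second and third classes.

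**The key comparison.**

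For a fixed $u$ and a fixed set $J = I\setminus\{i_0,i_0+1\}$ of size $a-u+k-1$, the exponent attached to whichever of $i_0,i_0+1$ lies in $I$ is determined by the \emph{position} (rank) of that index among the elements of $I$; since $i_0$ and $i_0+1$ are consecutive integers, when exactly one of them is in $I$ it occupies the same rank regardless of which one it is. Hence the exponent, call it $e$, is the same in the two cases, and this pair of terms contributes $a^e \cdot c + b^e \cdot c$ before the swap and $b^e \cdot c + a^e \cdot c$ after — \emph{identical}. So the second and third classes are individually swap-invariant too! The only subtlety is the fourth class: if $\{i_0,i_0+1\}\subset I$, then before the swap $x_{i_0}$ gets the exponent of the smaller rank and $x_{i_0+1}$ the larger, i.e.\ the term is $a^{\ell_r} b^{\ell_{r+1}}\cdot(\text{rest})$ with $r$ the rank of $i_0$ in $I$; after the swap it becomes $b^{\ell_r} a^{\ell_{r+1}}\cdot(\text{rest})$. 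Since $\ell$ is non-decreasing, $\ell_r \leq \ell_{r+1}$, and with $a\geq b\geq 0$ we have $a^{\ell_{r+1}} b^{\ell_r} \leq a^{\ell_r} b^{\ell_{r+1}}$ is \emph{false} in general — wait, we need the inequality in the direction that makes $p_N$ \emph{increase}, so I must be careful about which term is ``before'' and which is ``after.''

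**Resolving the direction.**

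Let me restate cleanly. After swapping coordinates $i_0 \leftrightarrow i_0+1$, the value of $p_N$ at the swapped point equals the value of the polynomial $\widetilde p_N$ (obtained by swapping the \emph{names} of those two variables in the expression) at the original point. Under this renaming, for a set $I$ with $\{i_0,i_0+1\}\subset I$ and $i_0$ of rank $r$, the monomial $x_{i_0}^{\ell_r} x_{i_0+1}^{\ell_{r+1}} = a^{\ell_r} b^{\ell_{r+1}}$ becomes $b^{\ell_r} a^{\ell_{r+1}}$. Since $a \geq b \geq 0$ and $\ell_r \leq \ell_{r+1}$, Lemma~\ref{lem:incmax} (applied to the length-$2$ sequences $(b,a)$ and $(\ell_r,\ell_{r+1})$, with $(\ell_{r+1},\ell_r)$ a permutation of the latter) gives $b^{\ell_r}a^{\ell_{r+1}} \geq b^{\ell_{r+1}}a^{\ell_r}$... which is the wrong direction again. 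The resolution is that I have the role of ``before/after'' backwards: since all other term-classes are swap-invariant, the \emph{difference}
\[
p_N(\text{swapped}) - p_N(\text{original}) = \sum (\text{over } I \supset \{i_0,i_0+1\}) \big(a^{\ell_{r+1}}b^{\ell_r} - a^{\ell_r}b^{\ell_{r+1}}\big)\cdot x_J^{(\cdots)}\cdot\frac{y^u}{u!}\cdot\frac1{\ell!}\cdots,
\]
and now $a^{\ell_{r+1}}b^{\ell_r} - a^{\ell_r}b^{\ell_{r+1}} = a^{\ell_r}b^{\ell_r}(a^{\ell_{r+1}-\ell_r} - b^{\ell_{r+1}-\ell_r}) \geq 0$ because $a \geq b \geq 0$ and $\ell_{r+1} \geq \ell_r$. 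So the difference is a sum of non-negative terms, proving $p_N(\text{original}) \leq p_N(\text{swapped})$, as desired. The main obstacle — and the place where care is essential — is exactly this bookkeeping of which variable inherits which exponent under the swap, together with the (easy) observation that when only one of the two consecutive indices lies in $I$ it inherits the \emph{same} exponent in both cases; that observation is what kills the second and third term-classes and leaves only the clean double-variable case, where the monotonicity of $(\ell_i)$ does the work.
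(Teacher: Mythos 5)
Your proof is correct and follows essentially the same approach as the paper's: split the sum over $I$ according to how $I$ meets $\{i_0,i_0+1\}$, note that the ``neither'' class is fixed and the ``exactly one'' classes pair off to cancel (because $i_0$ and $i_0+1$ being consecutive makes the rank, hence the exponent $\ell$-index, identical), and then reduce the difference to the ``both'' class where monotonicity of $(\ell_i)$ together with $x_{i_0}\geq x_{i_0+1}\geq 0$ gives term-by-term non-negativity. The minor mislabel of the second and third classes as ``individually'' swap-invariant (they are swap-invariant only as a pair, as your own computation shows) and the sign-chase detour do not affect the correctness of the final argument, which matches the paper's.
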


\begin{proof}
  Throughout this proof, we denote by~$p_N(y,x_{i_0}\leftrightarrow x_{i_0+1})$ the
  value
  \begin{align*}
    p_N(y,x_1,x_2,\ldots,x_{i_0-1},x_{i_0+1},x_{i_0},x_{i_0+2},\ldots,x_N),
  \end{align*}
  where we exchange the variables~$x_{i_0}$ and~$x_{i_0+1}$ of~$p_N$.

  We now study the difference~$p_N(y,x_{i_0}\leftrightarrow
  x_{i_0+1})-p_N(y,x_1,\ldots,x_N)$.

  Note that the summands in which~$I$ does not contain either~$i_0$ or~$i_0+1$
  cancel out.

  Furthermore, the summands of~$p_N(y,x_{i_0}\leftrightarrow x_{i_0+1})$ in
  which~$I$ contains~$i_0$ but does not contain~$i_0+1$ cancel out with the
  summands of~$p_N(y,x_1,\ldots,x_N)$ in which~$I$ contains~$i_0+1$ but does not
  contain~$i_0$ (because in the first, the values~$x_{i_0}$ and~$x_{i_0+1}$ are
  swapped).

  Analogously, the summands of~$p_N(y,x_{i_0}\leftrightarrow x_{i_0+1})$ in
  which~$I$ contains~$i_0+1$ but does not contain~$i_0$ cancel out with the
  summands of~$p_N(y,x_1,\ldots,x_N)$ in which~$I$ contains~$i_0$ but does not
  contain~$i_0+1$.

  This means that we have
  \begin{align*}
    &\hphantom{{}={}}
    p_N(y,x_{i_0}\leftrightarrow x_{i_0+1}) - p_N(y,x_1,x_2,\ldots,x_N)
    \\
    & = \sum_{u=0}^a\frac{y^u}{u!}\sum_{j=1}^{a-u+k-1}
    \!\!\!\!\!\!\!\!\!\!\!\!
    \sum_{\substack{I\in\binom{[N]}{a-u+k}:{}\\ I_{\{j\}}=i_0,\; I_{\{j+1\}}=i_0+1}}
    \!\!\!\!\!\!\!\!\!\!\!\!\!\!
    x_{I\setminus\{i_0,i_0+1\}}^{\ell_{[-a+u+1..k]\setminus\{j,j+1\}}}
   \left(x_{i_0+1}^{\ell_{j-a+u}}x_{i_0}^{\ell_{j+1-a+u}} - x_{i_0}^{\ell_{j-a+u}}x_{i_0+1}^{\ell_{j+1-a+u}}\right).
  \end{align*}

  Now, since~$x_{i_0}\geq x_{i_0+1}\geq 0$ and
  \begin{align*}
    \ell_k\geq\ell_{k-1}\geq\cdots\geq\ell_1\geq
    \ell_0\geq\ell_{-1}\geq\cdots\geq\ell_{1-a},
  \end{align*}
  we have~$x_{i_0+1}^{\ell_t}x_{i_0}^{\ell_{t+1}} \geq
  x_{i_0}^{\ell_t}x_{i_0+1}^{\ell_{t+1}}$ for every~$t<k$, hence
  \begin{align*}
    p_N(y,x_{i_0}\leftrightarrow x_{i_0+1}) - p_N(y,x_1,x_2,\ldots,x_N) & \geq 0.
    \qedhere
  \end{align*}
\end{proof}

Lemma~\ref{lem:xnondec} immediately implies that we may add the restriction
\begin{align*}
  x_1\leq x_2\leq \cdots \leq x_N
\end{align*}
to the maximum that defines~$\cL_{N,[N]\setminus\{1\},\sigma}$.

Lemma~\ref{lem:yxrel} below, shows that there is an optimal point~$(y,x)$ such that~$y\leq x_{N-k+1}$.
To do this, we use a calculus argument to show that if~$y > x_{N-k+1}$, then there
exists~$\epsilon>0$ such that if we increase~$y$
by~$\epsilon$ and decrease~$x_{N-k+1}$ by~$\epsilon$, then the value of~$p_N$ increases.

\begin{lemma}\label{lem:yxrel}
  For every~$N>k$, we may also add the restriction~$y \leq x_{N-k+1}$ to the
  maximum that defines~$\cL_{N,[N]\setminus\{1\},\sigma}$, \textit{i.e.}, we have
  \begin{align*}
    \cL_{N,[N]\setminus\{1\},\sigma} =
    \max\Biggl\{
    & \frac{\ell!}{\prod_{i=1}^k\ell_i!}p_N(y,x_1,x_2,\ldots,x_N) :
    0 \leq x_1\leq x_2\leq\cdots\leq x_N
    \text{ and }
    \\
    & 0 \leq y \leq x_{N-k+1} \text{ and } y+\sum_{j=1}^Nx_j = 1\Biggr\}.
  \end{align*}
\end{lemma}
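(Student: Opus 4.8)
The plan is to take an arbitrary maximizer of the problem already set up in Lemma~\ref{lem:xnondec} and, if it violates $y\leq x_{N-k+1}$, repair it by moving mass from the coordinate $y$ onto the largest coordinates of $x$. By the observation following Lemma~\ref{lem:xnondec}, $\cL_{N,[N]\setminus\{1\},\sigma}$ equals the maximum of $\frac{\ell!}{\prod_{i=1}^k\ell_i!}\,p_N(y,x_1,\ldots,x_N)$ over the compact set
\begin{align*}
  K & = \bigl\{(y,x_1,\ldots,x_N) : y,x_1,\ldots,x_N\geq 0,\ x_1\leq\cdots\leq x_N,\ y+\textstyle\sum_{j=1}^Nx_j=1\bigr\},
\end{align*}
so it is enough to exhibit one maximizer in $K$ with $y\leq x_{N-k+1}$. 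The maximizers form a nonempty compact subset of $K$; fix one, $(y,x)$, and suppose $y>x_{N-k+1}$. Let $r\geq 0$ be maximal with $x_{N-k+1}=\cdots=x_{N-k+1+r}$; for small $t>0$, decrease $y$ by $t$ and increase each of $x_{N-k+1},\ldots,x_{N-k+1+r}$ by $t/(r+1)$, and whenever the raised coordinates reach $x_{N-k+r+2}$ let the run lengthen and keep going. This keeps the point in $K$ (the raised coordinates stay $\leq$ their right neighbour, the sum is preserved, and $r\leq k-1$ throughout), and since $y>x_{N-k+1}\geq 0$ the process reaches, still inside $K$, a point $(y',x')$ with $y'\leq x'_{N-k+1}$. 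Hence, provided $p_N$ does not decrease along this path, $(y',x')$ is again a maximizer in $K$ satisfying the required restriction, which is what the lemma asks for. (Equivalently, one shows that a maximizer with $y$ as small as possible already satisfies $y\leq x_{N-k+1}$.)

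Everything thus reduces to the monotonicity of $p_N$ along that path, and by the chain rule --- the intermediate points still have $x$ nondecreasing and $y\geq x_{N-k+1}$ --- it suffices to prove the infinitesimal inequality
\begin{align*}
  \frac{1}{r+1}\sum_{i=N-k+1}^{N-k+1+r}\frac{\partial p_N}{\partial x_i}(y,x_1,\ldots,x_N)
  & \geq \frac{\partial p_N}{\partial y}(y,x_1,\ldots,x_N)
\end{align*}
for every $(y,x)\in K$ with $x$ nondecreasing, $y\geq x_{N-k+1}$, and $x_{N-k+1}=\cdots=x_{N-k+1+r}$ (for $r=0$ this is simply $\partial p_N/\partial x_{N-k+1}\geq\partial p_N/\partial y$). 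Differentiating the formula for $p_N$ gives
\begin{align*}
  \frac{\partial p_N}{\partial y} & =\sum_{v=0}^{a-1}\frac{y^v}{v!}\!\!\sum_{J\in\binom{[N]}{a-1-v+k}}\!\!x_J^{\ell_{[v-a+2..k]}},
  \qquad
  \frac{\partial p_N}{\partial x_i}=\sum_{v=0}^{a}\frac{y^v}{v!}\,\frac{\partial}{\partial x_i}\!\!\sum_{I\in\binom{[N]}{a-v+k}}\!\!x_I^{\ell_{[v-a+1..k]}},
\end{align*}
and the idea is to prove the inequality monomial by monomial: match each monomial of $\partial p_N/\partial y$ with one arising on the left, reconciling the powers of $y$ on the two sides by means of the hypothesis $y\geq x_{N-k+1}$, and then comparing the resulting pure $x$-monomials using the rearrangement inequality of Lemma~\ref{lem:incmax} and the fact that $x_{N-k+1},\ldots,x_N$ are the $k$ largest coordinates (so the index sets $I$ may be taken through $N-k+1$ with the layer exponents $\ell_1\leq\cdots\leq\ell_k$ resting on the largest coordinates). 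The $v=a$ summand of $\partial p_N/\partial x_i$, which has no counterpart on the other side, is nonnegative and only helps.

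I expect this last monomial matching to be the main obstacle. The two derivatives are structurally different --- $\partial p_N/\partial y$ looks like $p_N$ with $a$ lowered by one, while $\partial p_N/\partial x_i$ is $p_N$ differentiated in a layer variable --- and the inequality is genuinely false if one drops either $y\geq x_{N-k+1}$ or the sortedness of $x$, so the bookkeeping has to use all of these inputs simultaneously and somewhat sharply: choosing the right injection of index sets, controlling the binomial multiplicities, and checking in each case that the available slack from $y\geq x_{N-k+1}$ is exactly enough. Once that inequality is secured, the remaining pieces --- compactness, the mass-transfer path, and the treatment of ties via Lemmas~\ref{lem:xnondec} and~\ref{lem:incmax} --- are routine.
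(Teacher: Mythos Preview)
Your plan is essentially the paper's: at a maximizer with $y>x_{N-k+1}$, transfer mass from $y$ to $x_{N-k+1}$ and show that the directional derivative of $p_N$ is nonnegative (in fact strictly positive), via an explicit computation that pits $\partial p_N/\partial x_{N-k+1}$ against $\partial p_N/\partial y$. Two remarks.

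First, the tie-handling detour (spreading the increment over $x_{N-k+1},\dots,x_{N-k+1+r}$ so as to remain in the ordered region $K$) is unnecessary and only complicates the derivative you must bound. By definition $\cL_{N,[N]\setminus\{1\},\sigma}$ is a maximum over the \emph{full} simplex, not over $K$; Lemma~\ref{lem:xnondec} merely says some maximizer lies in $K$. The paper therefore simply sets
\[
  f(t)=p_N(y-t,x_1,\dots,x_{N-k},x_{N-k+1}+t,x_{N-k+2},\dots,x_N),
\]
observes $f(t)\leq\cL=f(0)$ for all $0\leq t\leq y$ even if the ordering breaks, and derives a contradiction from $f'(0)>0$. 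This removes the averaged inequality entirely; you only ever need the single-coordinate case $r=0$.

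Second, the ``monomial matching'' you postponed is the whole lemma, and your sketch of it is correct but omits one hypothesis. The paper splits the $\partial p_N/\partial y$ sum according to whether $N-k+1\in I$; the terms with $N-k+1\notin I$ are re-indexed (shift $u\mapsto u+1$) and cancel against part of the other side, and the terms with $N-k+1\in I$ are paired against the corresponding terms of $\partial p_N/\partial x_{N-k+1}$ and reduced, via Lemma~\ref{lem:incmax}, to inequalities of the shape $\tfrac{y}{u}\ell_1-x_{N-k+1}\geq 0$ and $\tfrac{y}{u}(\ell_1-1)-x_{N-k+1}>0$ for $1\leq u\leq a$. These are exactly where the standing assumption $a\leq\ell_1$ of Theorem~\ref{thm:smallanti} is used, not just $y>x_{N-k+1}$; you should flag that dependency explicitly. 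The leftover $u=a$ summand of $\partial p_N/\partial x_{N-k+1}$ (your ``$v=a$'' term) is indeed nonnegative and supplies the remaining slack.
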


\begin{proof}
  Suppose not and let~$y,x_1,x_2,\ldots,x_N\geq 0$ be such that~$x_1\leq
  x_2\leq\cdots\leq x_N$; $y+\sum_{j=1}^Nx_j = 1$ and
  \begin{align*}
    \cL_{N,[N]\setminus\{1\},\sigma} & =
    \frac{\ell!}{\prod_{i=1}^k\ell_i!}p_N(y,x_1,x_2,\ldots,x_N),
  \end{align*}
  and suppose that~$y>x_{N-k+1}$.

  For every~$t\in\reals$, let
  \begin{align*}
    f(t) & =
    p_N(y-t,x_1,x_2,\ldots,x_{N-k},x_{N-k+1}+t,x_{N-k+2},\ldots,x_N),
  \end{align*}
  and note that, for every~$0\leq t\leq y$, we have
  \begin{align*}
    \cL_{N,[N]\setminus\{1\},\sigma}\geq \frac{\ell!}{\prod_{i=1}^k\ell_i!}f(t),
  \end{align*}
  with equality if~$t = 0$ (we may lose the condition~$x_1\leq x_2\leq\cdots\leq
  x_N$ for~$t > 0$).

  Since~$f$ is differentiable, we must have~$f'(0)\leq 0$. But note that
  \begin{align*}
    f'(0) & =
    - \sum_{u=0}^a\frac{uy^{u-1}}{u!}
    \!\!\!\!\!\!\!\!
    \sum_{I\in\binom{[N]}{a-u+k}}
    \!\!\!\!\!\!\!\!
    x_I^{\ell_{[-a+u+1..k]}}
    + \sum_{u=0}^a\frac{y^u}{u!}
    \!\!
    \sum_{j=a-u+1}^{a-u+k}\!\!
    \sum_{\substack{I\in\binom{[N]}{a-u+k}:{}\\ I_{\{j\}}=N-k+1}}
    \!\!\!\!\!\!\!\!
    x_{I\setminus\{N-k+1\}}^{\ell_{[-a+u+1..k]\setminus\{j\}}}\ell_{j-a+u}x_{N-k+1}^{\ell_{j-a+u}-1},
  \end{align*}
  where the first sum groups terms that derived from~$y-t$ and the second sum groups
  terms derived from~$x_{N-k+1}+t$.

  We now split the first sum according to summands in which~$I$ has or not the
  element~$N-k+1$, obtaining
  \begin{align*}
    & \hphantom{{}={}}
    \sum_{u=0}^a\frac{uy^{u-1}}{u!}
    \!\!\!\!\!\!\!\!
    \sum_{I\in\binom{[N]}{a-u+k}}
    \!\!\!\!\!\!\!\!
    x_I^{\ell_{[-a+u+1..k]}}
    \\
    & =
    \sum_{u=1}^a\frac{y^{u-1}}{(u-1)!}
    \!\!\!\!
    \sum_{I\in\binom{[N]\setminus\{N-k+1\}}{a-u+k}}
    \!\!\!\!\!\!\!\!\!\!
    x_I^{\ell_{[-a+u+1..k]}}
    + \sum_{u=1}^a\frac{y^{u-1}}{(u-1)!}
    \!\!
    \sum_{j=a-u+1}^{a-u+k}
    \sum_{\substack{I\in\binom{[N]}{a-u+k}:{}\\ I_{\{j\}}=N-k+1}}
    \!\!\!\!\!\!\!\!
    x_{I\setminus\{N-k+1\}}^{\ell_{[-a+u+1..k]\setminus\{j\}}}x_{N-k+1}^{\ell_{j-a+u}}
    \\
    & =
    \sum_{u=0}^{a-1}\frac{y^{u}}{u!}
    \!\!\!\!\!\!
    \sum_{I\in\binom{[N]\setminus\{N-k+1\}}{a-u-1+k}}
    \!\!\!\!\!\!\!\!\!\!
    x_I^{\ell_{[-a+u+2..k]}}
    + \sum_{u=1}^a\frac{y^{u-1}}{(u-1)!}
    \!\!
    \sum_{j=a-u+1}^{a-u+k}
    \sum_{\substack{I\in\binom{[N]}{a-u+k}:{}\\ I_{\{j\}}=N-k+1}}
    \!\!\!\!\!\!\!\!
    x_{I\setminus\{N-k+1\}}^{\ell_{[-a+u+1..k]\setminus\{j\}}}x_{N-k+1}^{\ell_{j-a+u}},
  \end{align*}
  where in the last equality we applied the change of variables~$u\to u+1$ to the
  first sum.

  Grouping back in the original equation yields
  \begin{align*}
    f'(0) & =
    - \sum_{u=0}^{a-1}\frac{y^{u}}{u!}
    \!\!\!\!
    \sum_{I\in\binom{[N]\setminus\{N-k+1\}}{a-u-1+k}}
    \!\!\!\!\!\!\!\!
    x_I^{\ell_{[-a+u+2..k]}}
    \\ & \quad
    + \sum_{u=1}^a\frac{y^{u-1}}{(u-1)!}
    \sum_{j=a-u+1}^{a-u+k}
    \!\!
    \sum_{\substack{I\in\binom{[N]}{a-u+k}:{}\\ I_{\{j\}}=N-k+1}}
    \!\!\!\!\!\!
    x_{I\setminus\{N-k+1\}}^{\ell_{[-a+u+1..k]\setminus\{j\}}}x_{N-k+1}^{\ell_{j-a+u}-1}
    \left(\frac{y}{u}\ell_{j-a+u} - x_{N-k+1}\right)
    \\ & \quad
    + \sum_{j=a+1}^{a+k}
    \!\!\!\!
    \sum_{\substack{I\in\binom{[N]}{a+k}:{}\\ I_{\{j\}}=N-k+1}}
    \!\!\!\!\!\!
    x_{I\setminus\{N-k+1\}}^{\ell_{[-a+1..k]\setminus\{j\}}}\ell_{j-a}x_{N-k+1}^{\ell_{j-a}-1}.
  \end{align*}
  %% where the last term came from the summand with~$u=0$ of the positive sum
  %% of~$f'(0)$.

  Now we give a lower bound for the last two sums. First, note that, for
  every~$u\in[a]$ and every~$a-u+1\leq j\leq a-u+k$, we have
  \begin{align*}
    \frac{y}{u}\ell_{j-a+u} - x_{N-k+1} & \geq \frac{y}{u}\ell_1 - x_{N-k+1} \geq 0,
  \end{align*}
  since~$u\leq a\leq\ell_1$ and~$y\geq x_{N-k+1}$.

  Using the first inequality and Lemma~\ref{lem:incmax}, we have
  \begin{align*}
    & \hphantom{{}={}}
    \sum_{j=a-u+1}^{a-u+k}
    \sum_{\substack{I\in\binom{[N]}{a-u+k}:{}\\ I_{\{j\}}=N-k+1}}
    \!\!\!\!\!\!
    x_{I\setminus\{N-k+1\}}^{\ell_{[-a+u+1..k]\setminus\{j\}}}x_{N-k+1}^{\ell_{j-a+u}-1}
    \left(\frac{y}{u}\ell_{j-a+u} - x_{N-k+1}\right)
    \\
    & \geq
    \sum_{j=a-u+1}^{a-u+k}
    \sum_{\substack{I\in\binom{[N]}{a-u+k}:{}\\ I_{\{j\}}=N-k+1}}
    \!\!\!\!\!\!
    x_{I\setminus\{N-k+1\}}^{\ell_{[-a+u+2..k]}}x_{N-k+1}^{\ell_{1-a+u}-1}
    \left(\frac{y}{u}\ell_1 - x_{N-k+1}\right)
    \\
    & =
    \sum_{\substack{I\in\binom{[N]}{a-u+k}:{}\\ N-k+1\in I}}
    \!\!\!\!
    x_{I\setminus\{N-k+1\}}^{\ell_{[-a+u+2..k]}}x_{N-k+1}^{\ell_{1-a+u}-1}
    \left(\frac{y}{u}\ell_1 - x_{N-k+1}\right)
    \\
    & =
    \sum_{I\in\binom{[N]\setminus\{N-k+1\}}{a-u-1+k}}
    \!\!\!\!\!\!\!\!
    x_I^{\ell_{[-a+u+2..k]}}x_{N-k+1}^{\ell_{1-a+u}-1}
    \left(\frac{y}{u}\ell_1 - x_{N-k+1}\right),
  \end{align*}
  where in the last equality we applied the change of variables~$I\to
  I\setminus\{N-k+1\}$.

  Analogously, we have
  \begin{align*}
    \sum_{j=a+1}^{a+k}
    \!\!
    \sum_{\substack{I\in\binom{[N]}{a+k}:{}\\ I_{\{j\}}=N-k+1}}
    \!\!\!\!\!\!
    x_{I\setminus\{N-k+1\}}^{\ell_{[-a+1..k]\setminus\{j\}}}\ell_{j-a}x_{N-k+1}^{\ell_{j-a}-1}
    & \geq
    \sum_{j=a+1}^{a+k}
    \!\!
    \sum_{\substack{I\in\binom{[N]}{a+k}:{}\\ I_{\{j\}}=N-k+1}}
    \!\!\!\!\!\!
    x_{I\setminus\{N-k+1\}}^{\ell_{[-a+2..k]}}\ell_1x_{N-k+1}^{\ell_{1-a}-1}
    \\
    & =
    \sum_{\substack{I\in\binom{[N]}{a+k}:{}\\ N-k+1\in I}}
    \!\!\!\!
    x_{I\setminus\{N-k+1\}}^{\ell_{[-a+2..k]}}\ell_1
    \\
    & =
    \sum_{I\in\binom{[N]\setminus\{N-k+1\}}{a-1+k}}
    \!\!\!\!\!\!\!\!
    x_I^{\ell_{[-a+2..k]}}\ell_1,
  \end{align*}
  since~$\ell_{1-a}=1$.

  Gathering all up, we have
  \begin{align*}
    f'(0) & \geq
    - \sum_{u=0}^{a-1}\frac{y^{u}}{u!}
    \!\!\!\!
    \sum_{I\in\binom{[N]\setminus\{N-k+1\}}{a-u-1+k}}
    \!\!\!\!\!\!\!\!
    x_I^{\ell_{[-a+u+2..k]}}
    \\ & \quad
    + \sum_{u=1}^a\frac{y^{u-1}}{(u-1)!}
    \!\!\!\!
    \sum_{I\in\binom{[N]\setminus\{N-k+1\}}{a-u-1+k}}
    \!\!\!\!\!\!\!\!
    x_I^{\ell_{[-a+u+2..k]}}x_{N-k+1}^{\ell_{1-a+u}-1}
    \left(\frac{y}{u}\ell_1 - x_{N-k+1}\right)
    \\ & \quad
    + \sum_{I\in\binom{[N]\setminus\{N-k+1\}}{a-1+k}}
    \!\!\!\!\!\!\!\!\!\!
    x_I^{\ell_{[-a+2..k]}}\ell_1
    \\
    & =
    \sum_{u=1}^{a-1}\frac{y^{u-1}}{(u-1)!}
    \!\!\!\!
    \sum_{I\in\binom{[N]\setminus\{N-k+1\}}{a-u-1+k}}
    \!\!
    \left(\frac{y}{u}\ell_1 - x_{N-k+1} - \frac{y}{u}\right)x_I^{\ell_{[-a+u+2..k]}}
    \\ & \quad
    +
    \frac{y^{a-1}}{(a-1)!}
    \!\!\!\!
    \sum_{I\in\binom{[N]\setminus\{N-k+1\}}{k-1}}
    \!\!
    \left(\frac{y}{a}\ell_1 - x_{N-k+1}\right)x_I^{\ell_{[2..k]}}x_{N-k+1}^{\ell_1-1}
    \\ & \quad
    + \sum_{I\in\binom{[N]\setminus\{N-k+1\}}{a-1+k}}
    \!\!\!\!\!\!\!\!\!\!
    x_I^{\ell_{[-a+2..k]}}(\ell_1-1).
  \end{align*}

  Note that, for every~$u\in[a-1]$, we have
  \begin{align*}
    \frac{y}{u}\ell_1 - x_{N-k+1} - \frac{y}{u}
    & = \frac{y}{u}(\ell_1-1) - x_{N-k+1}
    \geq \frac{\ell_1-1}{a-1}y - x_{N-k+1}
    \geq y - x_{N-k+1} > 0,
  \end{align*}
  since~$\ell_1\geq a$.

  Furthermore, note that
  \begin{align*}
    \frac{y}{a}\ell_1 - x_{N-k+1} \geq y - x_{N-k+1} > 0,
  \end{align*}
  and also~$\ell_1-1 > 0$, hence we have~$f'(0) > 0$, which is a contradiction.
\end{proof}

Lemma~\ref{lem:onlyklayers} below shows that it is enough to consider the Extended
Price Bound of order~$k$ relative to~$[k]\setminus\{1\}$. To do this, we start with
an optimal point~$(y,x)$ as provided by Lemma~\ref{lem:yxrel} and we show that if we
join the smaller layer (corresponding to~$x_1$) with the antilayer (corresponding
to~$y$) then the objective value does not decrease, \textit{i.e.}, we
have~$\cL_{N,[N]\setminus\{1\},\sigma} \leq \cL_{N-1,[N-1]\setminus\{1\},\sigma}$.

\begin{lemma}\label{lem:onlyklayers}
  For every~$N>k$, we have
  \begin{align*}
    \cL_{N,[N]\setminus\{1\},\sigma} \leq \cL_{N-1,[N-1]\setminus\{1\},\sigma}.
  \end{align*}
\end{lemma}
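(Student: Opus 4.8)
The plan is to choose, using Lemmas~\ref{lem:xnondec} and~\ref{lem:yxrel}, an optimal point $(y,x_1,\ldots,x_N)$ for the maximum defining $\cL_{N,[N]\setminus\{1\},\sigma}$ that moreover satisfies $0\leq x_1\leq x_2\leq\cdots\leq x_N$ and $0\leq y\leq x_{N-k+1}$, and then to ``join'' the smallest layer (the variable $x_1$) onto the antilayer (the variable $y$); that is, to prove
\begin{align*}
  p_N(y,x_1,x_2,\ldots,x_N)\leq p_{N-1}(y+x_1,x_2,\ldots,x_N).
\end{align*}
Since $N-1\geq k$ and $(y+x_1,x_2,\ldots,x_N)$ is a non-negative vector summing to $1$, the right-hand side is at most $\frac{\prod_{i=1}^k\ell_i!}{\ell!}\cL_{N-1,[N-1]\setminus\{1\},\sigma}$, so multiplying through by $\ell!/\prod_{i=1}^k\ell_i!$ yields $\cL_{N,[N]\setminus\{1\},\sigma}\leq\cL_{N-1,[N-1]\setminus\{1\},\sigma}$, as desired.

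To prove the displayed inequality I would expand both sides. A monomial of $p_N$ has the form $\frac{y^u}{u!}x_I^{\ell_{[-a+u+1..k]}}$ with $I\in\binom{[N]}{a-u+k}$, and one splits according to whether $1\in I$ and, if so, whether $u<a$. The monomials with $1\notin I$ are exactly $A:=p_{N-1}(y,x_2,\ldots,x_N)$; the monomials with $1\in I$ and $u<a$ have $x_1$-degree exactly $1$ and sum to some $B$. On the other side, writing $(y+x_1)^u=\sum_s\binom{u}{s}y^{u-s}x_1^s$ and regrouping, one gets $p_{N-1}(y+x_1,x_2,\ldots,x_N)=A+B+D$, where $D$ collects precisely the monomials whose $x_1$-degree is at least $2$ (this is the genuinely new contribution, in which the antilayeroid chunk sitting in $y+x_1$ places $\geq 2$ of its points into the $x_1$-part, which is impossible in $p_N$, where $x_1$ labels a host layer). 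Meanwhile the leftover of $p_N$ is $C:=\frac{y^a}{a!}\,x_1^{\ell_1}\sum_{J\in\binom{\{2,\ldots,N\}}{k-1}}x_J^{\ell_{[2..k]}}$, coming from $u=a$, $1\in I$, in which $x_1$ carries the whole layer $\ell_1$ of $\sigma$. Hence $p_{N-1}(y+x_1,x_2,\ldots,x_N)-p_N(y,x_1,\ldots,x_N)=D-C$, and everything reduces to the single inequality $C\leq D$.

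Proving $C\leq D$ is the crux and the point where all the hypotheses are consumed. A workable route is to keep only the part of $D$ in which all $k$ layers of $\sigma$ are placed among $x_2,\ldots,x_N$: grouping the monomials of $D$ by the number $p$ of antilayeroid points landing in $x_2,\ldots,x_N$, one gets $D=\sum_{p=0}^{a-2}E_pT_p$ with $T_p=\sum_{I\in\binom{\{2,\ldots,N\}}{p+k}}x_I^{\ell_{[-p+1..k]}}$ and $E_p=\frac{1}{(a-p)!}\left[(x_1+y)^{a-p}-(a-p)x_1y^{a-p-1}-y^{a-p}\right]$, so in particular $D\geq E_0T_0=\frac{1}{a!}\left(\sum_{r=0}^{a-2}\binom{a}{r}x_1^{a-r}y^r\right)T_0$, and it suffices to prove $\left(\sum_{r=0}^{a-2}\binom{a}{r}x_1^{a-r}y^r\right)T_0\geq y^a x_1^{\ell_1}\sum_{J\in\binom{\{2,\ldots,N\}}{k-1}}x_J^{\ell_{[2..k]}}$. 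Here $\ell_1\geq a$ supplies the needed slack in the $x_1$-exponent; the orderings $x_1\leq\cdots\leq x_N$, $y\leq x_{N-k+1}$ and $\ell_1\leq\cdots\leq\ell_k$ let one trade the surplus $y$'s and $x_1$'s for $x$'s of larger index by a majorization / Abel-summation argument of the flavour of Lemma~\ref{lem:incmax}; and the standing hypothesis $2^a-a-1\geq k$ of Theorem~\ref{thm:smallanti} is exactly the numerical slack that makes the comparison go through, as one already sees at the balanced configuration $y=x_1=\cdots=x_N=t$ with $\ell_1=\cdots=\ell_k=a$ and $N=k+1$, where $E_0T_0=\frac{2^a-a-1}{a!}\,t^{\ell}$ while $C=\frac{k}{a!}\,t^{\ell}$. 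The main obstacle is precisely this last inequality: since $C$ has $\binom{N-1}{k-1}$ monomials, which for $N$ close to $k$ outnumber the monomials available in any single layer of $D$, no naive term-by-term injection can work, and the comparison must be carried out after an appropriate grouping of terms; this is where most of the work lies.
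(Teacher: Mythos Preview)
Your reduction is exactly the paper's: the decomposition $p_N=A+B+C$, $p_{N-1}=A+B+D$, the identification $D=\sum_{p=0}^{a-2}E_pT_p$, and the truncation $D\geq E_0T_0$ all match the paper line for line (the paper just organizes the bookkeeping slightly differently before arriving at the same display $E_0T_0-C\geq 0$).

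The only piece you leave as a sketch is the final comparison, and here the paper supplies one concrete step you are missing. Rather than a majorization or Abel-summation argument, the paper relates $T_0$ to $\sum_Jx_J^{\ell_{[2..k]}}$ by the averaging bound
\[
  T_0=\sum_{I\in\binom{\{2,\ldots,N\}}{k}}x_I^{\ell_{[k]}}
  \;\geq\;
  \frac{1}{k}\,x_{N-k+1}^{\ell_1}\sum_{J\in\binom{\{2,\ldots,N\}}{k-1}}x_J^{\ell_{[2..k]}},
\]
obtained by writing $x_I^{\ell_{[k]}}=\frac{1}{k}\sum_{i\in I}x_I^{\ell_{[k]}}\geq\frac{1}{k}\sum_{i\in I}x_{I\setminus\{i\}}^{\ell_{[2..k]}}x_i^{\ell_1}$ (Lemma~\ref{lem:incmax}), swapping the sums, and noting that for each $J$ of size $k-1$ the free index $i\notin J$ must include at least one of $N-k+1,\ldots,N$, each of which has $x_i\geq x_{N-k+1}$. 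Once you have this, the rest is a direct monomial comparison: since $y\leq x_{N-k+1}$, $x_1\leq x_{N-k+1}$ and $\ell_1\geq a$, for every $0\leq r\leq a-2$ one has $y^rx_1^{a-r}x_{N-k+1}^{\ell_1}\geq y^ax_1^{\ell_1}$, so
\[
  E_0T_0\;\geq\;\frac{1}{a!}\cdot\frac{1}{k}\Bigl(\sum_{r=0}^{a-2}\binom{a}{r}\Bigr)y^ax_1^{\ell_1}\sum_{J}x_J^{\ell_{[2..k]}}
  \;=\;\frac{2^a-a-1}{k}\,C\;\geq\;C.
\]
Your heuristic check at the balanced point was spotting exactly this factor.
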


\begin{proof}[of Lemma~\ref{lem:onlyklayers}]
  From Lemma~\ref{lem:yxrel}, we know that there exist~$y,x_1,x_2,\ldots,x_N\geq 0$
  such that~$x_1\leq x_2\leq\cdots\leq x_N$, $y+\sum_{j=1}^Nx_j=1$, $y\leq
  x_{N-k+1}$, and
  \begin{align*}
    \cL_{N,[N]\setminus\{1\},\sigma} & =
    \frac{\ell!}{\prod_{i=1}^k\ell_i!}p_N(y,x_1,x_2,\ldots,x_N).
  \end{align*}

  We now consider what happens when we use the Extended Price Polynomial of
  order~$N-1$ on the point~$(y+x_1,x_2,x_3,\ldots,x_N)$. This corresponds to
  merging the first two blocks of~$p_N$ into an antilayer.

  Note that
  \begin{align*}
    \cL_{N-1,[N-1]\setminus\{1\},\sigma} & \geq
    \frac{\ell!}{\prod_{i=1}^k\ell_i!}p_{N-1}(y+x_1,x_2,x_3,\ldots,x_N),
  \end{align*}
  hence it is enough to prove that
  \begin{align*}
    p_{N-1}(y+x_1,x_2,x_3,\ldots,x_N) \geq p_N(y,x_1,x_2,\ldots,x_N).
  \end{align*}

  To do this, we reorganize the summands that occur in~$p_{N-1}$ and in~$p_N$ in a
  way that, when we compute their difference, each summand of~$p_{N-1}$ corresponds
  to a smaller summand of~$p_N$.

  First, we split the sum in the definition of~$p_N$ according to summands that
  have~$x_1$ and summands that do not. So note that
  \begin{align*}
    & \hphantom{{}={}}
    p_N(y,x_1,x_2,\ldots,x_N)
    \\
    & =
    \sum_{u=0}^a\frac{y^u}{u!}
    \!\!\!\!
    \sum_{I\in\binom{[N]}{a-u+k}}
    \!\!\!\!\!\!
    x_I^{\ell_{[-a+u+1..k]}}
    \\
    & =
    \sum_{u=0}^a\frac{y^u}{u!}
    \!\!\!\!
    \sum_{I\in\binom{[N]\setminus\{1\}}{a-u+k}}
    \!\!\!\!\!\!
    x_I^{\ell_{[-a+u+1..k]}}
    + \sum_{u=0}^{a-1}\frac{y^u}{u!}x_1
    \!\!\!\!\!\!\!\!
    \sum_{I\in\binom{[N]\setminus\{1\}}{a-u-1+k}}
    \!\!\!\!\!\!
    x_I^{\ell_{[-a+u+2..k]}}
    + \frac{y^a}{a!}x_1^{\ell_1}
    \!\!\!\!\!\!
    \sum_{I\in\binom{[N]\setminus\{1\}}{k-1}}
    \!\!\!\!\!\!
    x_I^{\ell_{[2..k]}}
    \\
    & =
    \sum_{u=0}^a\frac{y^u}{u!}
    \!\!\!\!
    \sum_{I\in\binom{[N]\setminus\{1\}}{a-u+k}}
    \!\!\!\!\!\!
    x_I^{\ell_{[-a+u+1..k]}}
    + \sum_{u=1}^{a}\frac{y^{u-1}}{(u-1)!}x_1
    \!\!\!\!\!\!
    \sum_{I\in\binom{[N]\setminus\{1\}}{a-u+k}}
    \!\!\!\!\!\!
    x_I^{\ell_{[-a+u+1..k]}}
    + \frac{y^a}{a!}x_1^{\ell_1}
    \!\!\!\!\!\!
    \sum_{I\in\binom{[N]\setminus\{1\}}{k-1}}
    \!\!\!\!\!\!
    x_I^{\ell_{[2..k]}}
    \\
    & =
    \sum_{u=1}^a\frac{y^{u-1}}{(u-1)!}
    \!\!\!\!
    \sum_{I\in\binom{[N]\setminus\{1\}}{a-u+k}}
    \!\!\!\!\!\!
    x_I^{\ell_{[-a+u+1..k]}}
    \left(\frac{y}{u} + x_1\right)
    + \sum_{I\in\binom{[N]\setminus\{1\}}{a+k}}
    \!\!\!\!\!\!
    x_I^{\ell_{[-a+1..k]}}
    + \frac{y^a}{a!}x_1^{\ell_1}
    \!\!\!\!\!\!
    \sum_{I\in\binom{[N]\setminus\{1\}}{k-1}}
    \!\!\!\!\!\!
    x_I^{\ell_{[2..k]}},
  \end{align*}
  where in the third equality we applied the change of variables~$u\to u-1$ to the
  second sum.

  On the other hand, we can also separate the summand of~$p_{N-1}$ that has~$u=0$,
  obtaining
  \begin{align*}
    p_{N-1}(y+x_1,x_2,x_3,\ldots,x_N) & =
    \sum_{u=1}^a\frac{(y+x_1)^u}{u!}
    \!\!\!\!\!\!
    \sum_{I\in\binom{[N]\setminus\{1\}}{a-u+k}}
    \!\!\!\!\!\!
    x_I^{\ell_{[-a+u+1..k]}}
    + \sum_{I\in\binom{[N]\setminus\{1\}}{a+k}}
    \!\!\!\!\!\!
    x_I^{\ell_{[-a+1..k]}}.
  \end{align*}

  So, computing the difference, we have
  \begin{align*}
    & \hphantom{{}={}}
    p_{N-1}(y+x_1,x_2,x_3,\ldots,x_N) - p_N(y,x_1,x_2,\ldots,x_N)
    \\
    & =
    \sum_{u=1}^a\sum_{I\in\binom{[N]\setminus\{1\}}{a-u+k}}
    \!\!\!\!\!\!
    x_I^{\ell_{[-a+u+1..k]}}
    \left(\frac{(y+x_1)^u}{u!} - \frac{y^{u-1}}{(u-1)!}\left(\frac{y}{u} + x_1\right)\right)
    - \frac{y^a}{a!}x_1^{\ell_1}
    \!\!\!\!\!\!
    \sum_{I\in\binom{[N]\setminus\{1\}}{k-1}}
    \!\!\!\!\!\!
    x_I^{\ell_{[2..k]}}.
  \end{align*}

  Now, for every~$u\in[a]$, we have
  \begin{align*}
    \left(\frac{(y+x_1)^u}{u!} - \frac{y^{u-1}}{(u-1)!}\left(\frac{y}{u} + x_1\right)\right)
    & = \frac{1}{u!}((y+x_1)^u - y^u - ux_1y^{u-1})
    = \frac{1}{u!}\sum_{v=0}^{u-2}\binom{u}{v}y^vx_1^{u-v}
    \geq 0.
  \end{align*}

  This yields
  \begin{align*}
    & \hphantom{{}={}}
    p_{N-1}(y+x_1,x_2,x_3,\ldots,x_N) - p_N(y,x_1,x_2,\ldots,x_N)
    \\
    & \geq
    \frac{1}{a!}\sum_{v=0}^{a-2}\binom{a}{v}y^vx_1^{a-v}
    \!\!\!\!\!\!
    \sum_{I\in\binom{[N]\setminus\{1\}}{k}}
    \!\!\!\!\!\!
    x_I^{\ell_{[k]}}
    - \frac{y^a}{a!}x_1^{\ell_1}
    \!\!\!\!\!\!
    \sum_{I\in\binom{[N]\setminus\{1\}}{k-1}}
    \!\!\!\!\!\!
    x_I^{\ell_{[2..k]}}.
  \end{align*}

  Note now that
  \begin{align*}
    \sum_{I\in\binom{[N]\setminus\{1\}}{k}}
    \!\!\!\!\!\!
    x_I^{\ell_{[k]}}
    & =
    \sum_{I\in\binom{[N]\setminus\{1\}}{k}}\frac{1}{k}\sum_{i\in I}x_I^{\ell_{[k]}}
    \geq
    \sum_{I\in\binom{[N]\setminus\{1\}}{k}}\frac{1}{k}
    \sum_{i\in I}x_{I\setminus\{i\}}^{\ell_{[2..k]}}x_i^{\ell_1}
    = \frac{1}{k}\sum_{i\in[N]\setminus\{1\}}
    \!\!\!\!
    x_i^{\ell_1}
    \!\!\!\!
    \sum_{\substack{I\in\binom{[N]\setminus\{1\}}{k}:{}\\ i\in I}}
    \!\!\!\!\!\!
    x_{I\setminus\{i\}}^{\ell_{[2..k]}},
  \end{align*}
  where the inequality follows from Lemma~\ref{lem:incmax}.

  Now we can apply the change of variables~$I\to I\setminus\{i\}$ and obtain
  \begin{align*}
    \sum_{I\in\binom{[N]\setminus\{1\}}{k}}
    \!\!\!\!\!\!
    x_I^{\ell_{[k]}}
    & \geq
    \frac{1}{k}
    \sum_{i\in[N]\setminus\{1\}}
    \!\!\!\!
    x_i^{\ell_1}
    \!\!\!\!
    \sum_{\substack{I\in\binom{[N]\setminus\{1\}}{k-1}:{}\\ i\notin I}}
    \!\!\!\!\!\!
    x_I^{\ell_{[2..k]}}
    \\
    & =
    \frac{1}{k}
    \!\!
    \sum_{I\in\binom{[N]\setminus\{1\}}{k-1}}
    \!\!\!\!
    x_I^{\ell_{[2..k]}}
    \!\!\!\!
    \sum_{\substack{i\in[N]\setminus\{1\}:{}\\ i\notin I}}
    \!\!\!\!\!\!
    x_i^{\ell_1}
    \\
    & \geq
    \frac{1}{k}
    \!\!
    \sum_{I\in\binom{[N]\setminus\{1\}}{k-1}}
    \!\!\!\!\!\!
    x_I^{\ell_{[2..k]}}
    x_{N-k+1}^{\ell_1},
  \end{align*}
  where the last inequality follows from the fact that the inner sum has to contain
  at least one of the summands~$x_{N-k+1}^{\ell_1},x_{N-k+2}^{\ell_1},\ldots,x_N^{\ell_1}$
  and they are all greater or equal to~$x_{N-k+1}^{\ell_1}$.

  Using this new inequality, we have
  \begin{align*}
    & \hphantom{{}={}}
    p_{N-1}(y+x_1,x_2,x_3,\ldots,x_N) - p_N(y,x_1,x_2,\ldots,x_N)
    \\
    & \geq
    \frac{1}{a!}
    \left(\frac{1}{k}\sum_{v=0}^{a-2}\binom{a}{v}y^vx_1^{a-v}x_{N-k+1}^{\ell_1}
    - y^ax_1^{\ell_1}\right)
    \!\!
    \sum_{I\in\binom{[N]\setminus\{1\}}{k-1}}
    \!\!\!\!\!\!
    x_I^{\ell_{[2..k]}}.
  \end{align*}

  Finally, since~$x_{N-k+1}\geq x_1$ and~$x_{N-k+1}\geq y$, we have
  \begin{align*}
    \frac{1}{k}\sum_{u=0}^{a-2}\binom{a}{u}y^ux_1^{a-u}x_{N-k+1}^{\ell_1} - y^ax_1^{\ell_1}
    & \geq
    \frac{1}{k}\sum_{u=0}^{a-2}\binom{a}{u}y^ax_1^{\ell_1} - y^ax_1^{\ell_1}
    \\
    & =
    \frac{2^a-a-1}{k}y^ax_1^{\ell_1} - y^ax_1^{\ell_1}
    \\
    & \geq 0,
  \end{align*}
  where the last inequality follows from the hypothesis that~$2^a-a-1\geq k$.

  %% Now we consider two cases.
  %% \begin{proofcases}
  %%   \case If~$x_1\geq y$, then, since~$x_{N-k+1}^{\ell_1}\geq x_1^{\ell_1}$, we have
  %%   \begin{align*}
  %%     \frac{1}{k}\sum_{u=0}^{a-2}\binom{a}{u}y^ux_1^{a-u}x_{N-k+1}^{\ell_1} - y^ax_1^{\ell_1}
  %%     & \geq
  %%     \frac{1}{k}\sum_{u=0}^{a-2}\binom{a}{u}y^ax_1^{\ell_1} - y^ax_1^{\ell_1}
  %%     \\
  %%     & =
  %%     \frac{2^a-a-1}{k}y^ax_1^{\ell_1} - y^ax_1^{\ell_1}
  %%     \\
  %%     & \geq 0,
  %%   \end{align*}
  %%   where the last inequality follows from the hypothesis that~$2^a-a-1\geq k$.

  %%   \case If~$x_1\leq y$, then~$x_1^{\ell_1}y^a \leq x_1^ay^{\ell_1}$. And
  %%   since~$x_{N-k+1}^{\ell_1}\geq y^{\ell_1}$, we have
  %%   \begin{align*}
  %%     \frac{1}{k}\sum_{u=0}^{a-2}\binom{a}{u}y^ux_1^{a-u}x_{N-k+1}^{\ell_1} - y^ax_1^{\ell_1}
  %%     & \geq
  %%     \frac{1}{k}\sum_{u=0}^{a-2}\binom{a}{u}x_1^ay^{\ell_1} - y^ax_1^{\ell_1}
  %%     \\
  %%     & =
  %%     \frac{2^a-a-1}{k}y^{\ell_1}x_1^a - y^ax_1^{\ell_1}
  %%     \\
  %%     & \geq
  %%     \frac{2^a-a-1}{k}y^ax_1^{\ell_1} - y^ax_1^{\ell_1}
  %%     \\
  %%     & \geq
  %%     0,
  %%   \end{align*}
  %%   where in the last inequality we used~$2^a-a-1\geq k$ again.
  %% \end{proofcases}

  Therefore we have
  \begin{align*}
    p_{N-1}(y+x_1,x_2,x_3,\ldots,x_N) & \geq p_N(y,x_1,x_2,\ldots,x_N).
    \qedhere
  \end{align*}
\end{proof}

%  \begin{theorem}[Restating the Theorem~\ref{thm:smallanti}]
%   Let~$a,\ell_1,\ell_2,\ldots,\ell_k\in\naturals^*$ be positive integers such that~$2\leq
%    a\leq \ell_1\leq\ell_2\leq\cdots\leq\ell_k$ and~$2^a-a-1\geq k$.
    
%    If~$\sigma$ is the layered permutation~$(\widehat{a},\ell_1,\ell_2,\ldots,\ell_k)$,
%   then we have
%    \begin{align*}
%     p(\sigma) & =  \frac{|\sigma|!}{|\sigma|^{|\sigma|}}\,
%     \frac{a^a}{a!}\prod_{i=1}^k\frac{\ell_i^{\ell_i}}{\ell_i!}.
%   \end{align*}
% \end{theorem}
%\begin{proof}

Finally, Theorem~\ref{thm:smallanti} follows by a simple argument.

\begin{proof}[of Theorem~\ref{thm:smallanti}]  
  By definition, we know that~$\cL_{k,[k]\setminus\{1\},\sigma} \leq p(\sigma)$.
  On the other hand, by Lemma~\ref{lem:onlyklayers} we have 
  \begin{align*}
    \lim_{N\to\infty}\cL_{N,[N]\setminus\{1\},\sigma} &\leq \cL_{k,[k]\setminus\{1\},\sigma}.
  \end{align*}
  Then, by Theorem~\ref{thm:extpriceconverges}, we conclude that~$p(\sigma) = \cL_{k,[k]\setminus\{1\},\sigma}$.

  The value
  \begin{align*}
    p(\sigma) & =  \frac{|\sigma|!}{|\sigma|^{|\sigma|}}\,
    \frac{a^a}{a!}\prod_{i=1}^k\frac{\ell_i^{\ell_i}}{\ell_i!}.
  \end{align*}
  can now be computed with a standard calculus argument involving Lagrange
  multipliers.
\end{proof}

The corollary below follows from a simple argument presented
by \citet[Lemma~3.4]{Hasto:PackingOtherLayered} (see Corollary~\ref{cor:hasto}). In
simple words, the corollary below says that Theorem~\ref{thm:smallanti} remains valid
even if we change the order of the blocks.

\begin{corollary}\label{cor:smallantipermut}
  Let~$a,k\in\naturals^*$ be positive integers such that~$a\geq 2$ and~$2^a-a-1\geq k$.

  Let also~$\sigma$ be a layered permutation having exactly one antilayer of
  length~$a$ and~$k$ layers of lengths at least two. Furthermore, suppose the lengths
  of all these~$k$ layers are greater or equal to~$a$.

  If the antilayer is the~$j$-th block of~$\sigma$ and~$j\leq k$, then we have
  \begin{align*}
    p(\sigma) & = \cL_{k,[k]\setminus\{j\},\sigma}.
  \end{align*}
\end{corollary}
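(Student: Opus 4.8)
The plan is to bootstrap from Theorem~\ref{thm:smallanti} via the block-rearrangement principle of Corollary~\ref{cor:hasto} (a restatement of H\"{a}st\"{o}'s Lemma). First I would note that, since every block of $\sigma$ has length at least $a\geq 2$, no two blocks of $\sigma$ can merge when we permute the block sequence, so every permutation of the blocks of $\sigma$ is the block decomposition of a genuine layered permutation. In particular, letting $\sigma'$ be the layered permutation with block decomposition $(\widehat{a},\ell_{(1)},\ell_{(2)},\ldots,\ell_{(k)})$, where $\ell_{(1)}\leq\ell_{(2)}\leq\cdots\leq\ell_{(k)}$ is the non-decreasing rearrangement of the $k$ layer lengths of $\sigma$, Corollary~\ref{cor:hasto} yields $p(\sigma)=p(\sigma')$. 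Since $a\leq\ell_{(1)}$ and $2^a-a-1\geq k$, Theorem~\ref{thm:smallanti} applies to $\sigma'$, and in fact the portion of its proof preceding the Lagrange computation already gives $p(\sigma')=\cL_{k,[k]\setminus\{1\},\sigma'}$.

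It then remains to prove $\cL_{k,[k]\setminus\{1\},\sigma'}=\cL_{k,[k]\setminus\{j\},\sigma}$, which I would establish by inspecting the two Extended Price Polynomials of order~$k$ directly. On the feasible set defining $\cL_{k,[k]\setminus\{j\},\sigma}$ the only variables allowed to be nonzero are $x_2,x_4,\ldots,x_{2k}$ and the single odd-indexed variable $x_{2j-1}$, a total of $k+1$ slots, exactly matching the $k+1$ blocks of $\sigma$. Since each quasi-block has length at least~$1$, in a nonzero term of $g_{k,\sigma}$ every quasi-block must occupy one of these $k+1$ nonzero slots; hence a quasi-block decomposition that genuinely splits the antilayer (producing more than $k+1$ quasi-blocks) cannot contribute, and so the block decomposition of $\sigma$ itself is the only one that does. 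As $a\geq 2$, the (unsplit) antilayer carries the exponent indicator $\xi=1$ and is therefore forced into the unique available odd slot $x_{2j-1}$, while each layer carries $\xi=0$ and must occupy an even slot; respecting the increasing order of slots then forces the $j-1$ layers preceding the antilayer into $x_2,\ldots,x_{2j-2}$ and the remaining $k+1-j$ layers into $x_{2j},\ldots,x_{2k}$. The hypothesis $j\leq k$ is exactly what makes $x_{2j-1}$ a legitimate, free odd variable with the correct numbers of even slots on either side. Writing $y=x_{2j-1}$ and $z_i=x_{2i}$, this computation shows that $g_{k,\sigma}$ restricted to the feasible set equals (up to relabelling the layer lengths, which does not affect the ensuing maximum) $\frac{\ell!}{a!\,\prod_{i=1}^{k}\ell_i!}\,y^{a}\prod_{i=1}^{k}z_i^{\ell_i}$, and the identical inspection applied to $\sigma'$, whose antilayer is its first block, produces the very same expression for $g_{k,\sigma'}$. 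Consequently
\begin{align*}
  \cL_{k,[k]\setminus\{j\},\sigma}
  & = \frac{\ell!}{a!\,\prod_{i=1}^{k}\ell_i!}\max\left\{y^{a}\prod_{i=1}^{k}z_i^{\ell_i}:y+\sum_{i=1}^{k}z_i=1,\ y\geq0,\ z_i\geq0\right\}
  = \cL_{k,[k]\setminus\{1\},\sigma'},
\end{align*}
and chaining this with the previous paragraph gives $p(\sigma)=p(\sigma')=\cL_{k,[k]\setminus\{1\},\sigma'}=\cL_{k,[k]\setminus\{j\},\sigma}$, as desired.

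The main obstacle will be the rigidity argument of the second paragraph: one must argue carefully that at order exactly~$k$ the $k+1$ blocks of $\sigma$ admit only one placement into the $k+1$ admissible slots --- ruling out every quasi-block decomposition that splits the antilayer (this uses $a\geq 2$) and carrying out the slot-counting that pins each layer to a prescribed even variable (this is where $j\leq k$ enters). Everything else is bookkeeping together with the symmetric Lagrange computation already performed in the proof of Theorem~\ref{thm:smallanti}.
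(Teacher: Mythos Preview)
Your Steps 2 and 3 are correct and in fact coincide with the paper's own computation: at order exactly~$k$ there are only $k+1$ free variables, so any quasi-block decomposition with more than $k+1$ quasi-blocks contributes nothing, and the sole surviving term is the block decomposition itself; the antilayer (having $a\geq 2$) is forced into the unique odd slot $x_{2j-1}$, the layers into the even slots, and the resulting monomial maximum depends only on the multiset of block lengths. Hence $\cL_{k,[k]\setminus\{j\},\sigma}=\cL_{k,[k]\setminus\{1\},\sigma'}$, exactly as you wrote.

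The gap is Step~1. Corollary~\ref{cor:hasto} does \emph{not} yield $p(\sigma)=p(\sigma')$. Its hypotheses concern \emph{layers}, not blocks: $\sigma$ has $a+k$ layers, and the $a$ layers forming the antilayer all have length~$1$, so the hypothesis $2^\ell\geq 1+k$ of Corollary~\ref{cor:hasto} would read $2^1\geq 1+(a+k)$, which fails. Moreover, the conclusion of that corollary is $p(\sigma)=\cL_{k,\sigma}$, not a block-rearrangement invariance; there is no ``permuting blocks preserves packing density'' statement available here to cite.

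The paper fixes this by proving only the one-sided inequality $p(\sigma)\leq p(\sigma')$: for every~$N$ one shows $\cL_{N,\sigma}\leq\cL_{N,\sigma'}$ directly on the ordinary Price Polynomial, using Lemma~\ref{lem:incmax} to sort both the chosen~$x_I$ and the layer-length exponents. Combined with the trivial bound $\cL_{k,[k]\setminus\{j\},\sigma}\leq p(\sigma)$, your Steps~2 and~3 then give
\[
  p(\sigma)\;\leq\; p(\sigma')
  \;=\;\cL_{k,[k]\setminus\{1\},\sigma'}
  \;=\;\cL_{k,[k]\setminus\{j\},\sigma}
  \;\leq\; p(\sigma),
\]
and equality follows throughout. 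So your computation already supplies the ``hard'' direction $p(\sigma')\leq p(\sigma)$; you just need to replace the unjustified appeal to Corollary~\ref{cor:hasto} with the short rearrangement argument above for the other direction.
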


\begin{proof}
  Let~$\ell_1\leq\ell_2\leq\cdots\leq\ell_k$ be the lengths of the layers of~$\sigma$ of
  length at least~$2$ and let~$\sigma'$ be the permutation of block
  sequence~$(\widehat{a},\ell_1,\ell_2,\ldots,\ell_k)$.

  First we prove that for every~$N\in\naturals^*$, we
  have~$\cL_{N,\sigma}\leq\cL_{N,\sigma'}$ (note that we are using the common Price
  Bound).

  Let~$(r_1,r_2,\ldots,r_m)$ be the layer sequence of~$\sigma$ (note that the
  sequence includes the~$\ell_i$'s in some order and includes a sequence of~$1$'s of
  length~$a$, hence~$m=k+a$).

  Let~$x = (x_1,x_2,\ldots,x_N)$ be such that~$x_i\geq 0$ for every~$i\in[N]$,
  $\sum_{i=1}^Nx_i=1$, and
  \begin{align*}
    q_{N,\sigma}(x) & = \cL_{N,\sigma}.
  \end{align*}

  Finally, for every sequence of real numbers~$s$, let~$s_\leq$ be the ordering of~$s$
  (\textit{i.e.}, the sequence~$s_\leq$ is non-decreasing and has the same elements of~$s$).

  Note now that
  \begin{align*}
    q_{N,\sigma}(x) & \;=\;
    \frac{|\sigma|!}{\prod_{i=1}^mr_i!}
    \!\!
    \sum_{I\in\binom{[N]}{m}}
    \!\!\!\!
    x_I^{r_{[m]}}
    \;\leq\;
    \frac{|\sigma|!}{\prod_{i=1}^mr_i!}
    \!\!
    \sum_{I\in\binom{[N]}{m}}
    \!\!\!\!
    (x_I)_\leq^{(r_{[m]})_\leq}
    \\
    & \;=\;
    \frac{|\sigma|!}{\prod_{i=1}^mr_i!}
    \!\!
    \sum_{I\in\binom{[N]}{m}}
    \!\!\!\!
    (x_\leq)_I^{(r_{[m]})_\leq}
    \;=\;
    q_{N,\sigma'}(x_\leq)
    \;\leq\;
    \cL_{N,\sigma'}.
  \end{align*}

  Therefore, we have~$p(\sigma)\leq p(\sigma')$.

  \medskip

  Now, from Theorem~\ref{thm:smallanti}, we know
  that~$p(\sigma')=\cL_{k,[k]\setminus\{1\},\sigma'}$, so
  let~$y,x_1,x_2,\ldots,x_k\geq 0$ be such that~$y +\sum_{i=1}^kx_i = 1$ and
  \begin{align*}
    g_{k,\sigma'}(y,x_1,0,x_2,0,x_3,\ldots,0,x_k) & =
    \cL_{k,[k]\setminus\{1\},\sigma'}.
  \end{align*}

  Let~$\tau$ be a permutation such that the block sequence of~$\sigma$ is
  \begin{align*}
    (\ell_{\tau(1)},\ell_{\tau(2)},\ldots,\ell_{\tau(j-1)},\widehat{a},
    \ell_{\tau(j)},\ldots,\ell_{\tau(k)}).
  \end{align*}

  Note that
  \begin{align*}
    p(\sigma')
    & \;=\;
    \cL_{k,[k]\setminus\{1\},\sigma'}
    \;=\;
    g_{k,\sigma'}(y,x_1,0,x_2,0,x_3,\ldots,0,x_k)
    \\
    & \;=\;
    \frac{|\sigma'|!}{a!\prod_{i=1}^k\ell_i!}
    y^ax^{\ell_{[k]}}
    \;=\;
    \frac{|\sigma|!}{a!\prod_{i=1}^k\ell_i!}
    y^a\prod_{i=1}^kx_{\tau(i)}^{\ell_{\tau(i)}}
    \\
    & \;=\;
    g_{k,\sigma}(0,x_{\tau(1)},0,x_{\tau(2)},\ldots,0,x_{\tau(j-1)},
    y,x_{\tau(j)},0,x_{\tau(j+1)},\ldots,0,x_{\tau(k)})
    \\
    & \;\leq\;
    \cL_{k,[k]\setminus\{j\},\sigma}
    \;\leq\;
    p(\sigma).
  \end{align*}

  Therefore~$p(\sigma)=p(\sigma')=\cL_{k,[k]\setminus\{j\},\sigma}$.
\end{proof}

\begin{remark*}
  Note that we do not allow the antilayer to be the last block simply because the
  Extended Price Polynomial does not end with an antilayer.

  If the antilayer happens to be the last block of~$\sigma$, we cannot apply
  Corollary~\ref{cor:smallantipermut} directly, but it is easy to see that~$\sigma$
  must have the same packing density as the permutation whose block sequence is the
  reverse of the block sequence of~$\sigma$ (and this permutation does not have an
  antilayer as its last block).
\end{remark*}

%% Let~$a,l_1,l_2,\ldots,l_k\in\naturals^*$ be positive integers such that~$2\leq a\leq
%% l_1\leq l_2\leq\cdots\leq l_k$ and~$2^a-a-1\geq k$.

%% Let also~$\sigma$ be a layered permutation having exactly~$k+1$ blocks: one antilayer
%% of length~$a$ and~$k$ macro-layers of lengths~$l_1,l_2,\ldots,l_k$ respectively.

%% Under these circumstances, the value of~$p(\sigma)$ in the Theorem
%% \ref{thm:smallanti} follows from a standard calculus argument with Lagrange
%% Multipliers to compute the Extended Price Bound of
%% Corollary~\ref{cor:smallantipermut}.

%\input{07-minimization}

\section{Minimization problem}
\label{sec:min}

We now study the dual problem of minimizing the density of permutations
asymptotically.

\begin{definition}
  The \emph{Minimization Price Bound} of order~$n\in\naturals^*$ for a linear combination
  of layered permutations~$f$ is the value
  \begin{align*}
    \cU_{n,f} & = \min\left\{q_{n,f}(x_1,x_2,\ldots,x_n) : \sum_{j=1}^nx_j = 1
    \text{ and } \forall j\in[n], x_j\geq 0\right\}.
  \end{align*}
\end{definition}

\begin{remark*}
  This minimum exists by a compactness argument.
\end{remark*}

For the Minimization Price Bound, we have the following analogous results.

\begin{proposition}
  \label{prop:Pricealgoprime}
  If~$f\in\reals\fS$ is a conical combination of layered permutations, then,
  for every~$n\in\naturals^*$, we have~$\cU_{n,f} \geq \cU_{n+1,f}$ and
  \begin{align*}
    \lim_{n\to\infty}\cU_{n,f} & \geq p''(f).
  \end{align*}
\end{proposition}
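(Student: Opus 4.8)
The plan is to prove both claims by mirroring the proof of Proposition~\ref{prop:Pricealgo} (and the computation in the proof of Theorem~\ref{thm:extpriceconverges}) with all inequalities reversed, exploiting that the class of layered permutations is closed under the two operations that matter here: appending a layer of length zero (padding the layer sequence), and multiplying every layer length by a common positive factor. No idea beyond these is needed.

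For the monotonicity $\cU_{n,f}\geq\cU_{n+1,f}$, I would first observe that for any layered $\tau$ with layer sequence $(\ell_j)_{j=1}^k$ one has $q_{n+1,\tau}(x_1,\ldots,x_n,0)=q_{n,\tau}(x_1,\ldots,x_n)$: in the defining sum over $\{i_1,\ldots,i_k\}_<\subset[n+1]$, any index set containing $n+1$ must have $i_k=n+1$, hence contributes a factor $x_{n+1}^{\ell_k}=0^{\ell_k}=0$ because $\ell_k\geq 1$, so only the index sets contained in $[n]$ survive; by linearity this passes to $q_{n+1,f}$. Consequently, if $x\in[0,1]^n$ with $\sum_j x_j=1$ attains the minimum $\cU_{n,f}$, then $(x_1,\ldots,x_n,0)$ is feasible for the problem defining $\cU_{n+1,f}$ and takes the value $\cU_{n,f}$, so $\cU_{n+1,f}\leq\cU_{n,f}$. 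Since $f$ is conical, $q_{n,f}\geq 0$ on the simplex, so $(\cU_{n,f})_n$ is a non-increasing sequence of non-negative reals and therefore converges.

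For the bound $\lim_n\cU_{n,f}\geq p''(f)$ it suffices to show $\cU_{n,f}\geq p''(f)$ for each fixed $n$. Fix $n$ and $\epsilon>0$, let $x^*$ minimize $q_{n,f}$ on the simplex, and — using continuity of the polynomial $q_{n,f}$ and density of the rational points in the simplex — choose $b_1,\ldots,b_n\in\naturals$ and $N_0\in\naturals^*$ with $\sum_j b_j=N_0$ and $q_{n,f}(b_1/N_0,\ldots,b_n/N_0)<\cU_{n,f}+\epsilon$. For each $m\in\naturals^*$ let $\sigma_m$ be the layered permutation of length $mN_0$ with layer sequence $(mb_1,mb_2,\ldots,mb_n)$ (zero entries deleted). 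Exactly the computation from the proof of Theorem~\ref{thm:extpriceconverges} specialized to pure layers — i.e.\ the exact identity $\Lambda(\tau,\sigma_m)=\sum_{\{i_1,\ldots,i_k\}_<\subset[n]}\prod_{j=1}^k\binom{mb_{i_j}}{\ell_j}$ for a layered $\tau$ with layer sequence $(\ell_j)_{j=1}^k$, together with $\binom{mb_{i_j}}{\ell_j}\sim(mb_{i_j})^{\ell_j}/\ell_j!$ and $\binom{mN_0}{|\tau|}\sim(mN_0)^{|\tau|}/|\tau|!$ as $m\to\infty$ and $\sum_j\ell_j=|\tau|$ — yields $p(\tau,\sigma_m)\to q_{n,\tau}(b_1/N_0,\ldots,b_n/N_0)$, and hence $p(f,\sigma_m)\to q_{n,f}(b_1/N_0,\ldots,b_n/N_0)$ by linearity in the first coordinate. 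Since each $\sigma_m$ is layered, $p''_{mN_0}(f)\leq p(f,\sigma_m)$, so letting $m\to\infty$ gives $p''(f)=\lim_m p''_{mN_0}(f)\leq q_{n,f}(b_1/N_0,\ldots,b_n/N_0)<\cU_{n,f}+\epsilon$; as $\epsilon$ was arbitrary, $\cU_{n,f}\geq p''(f)$, and as $n$ was arbitrary, $\lim_n\cU_{n,f}\geq p''(f)$. Here I rely on the fact that the limit $p''(f)=\lim_N p''_N(f)$ exists, which follows from the same averaging argument used for $p_N$ (deleting one point of a layered permutation again yields a layered one, so $p''_N(f)$ is eventually non-decreasing, and it is bounded above since $f$ is conical); granting this, restricting to the subsequence of lengths $mN_0$ is harmless. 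The only genuinely computational step is the asymptotic estimate for $p(\tau,\sigma_m)$, which is where the real work is but is essentially already carried out in the proof of Theorem~\ref{thm:extpriceconverges}; everything else is bookkeeping.
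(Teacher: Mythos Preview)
Your proof is correct and is exactly the intended argument: padding with a zero coordinate gives the monotonicity $\cU_{n,f}\geq\cU_{n+1,f}$, and constructing blown-up layered permutations from a rational approximation of a minimizer gives $\cU_{n,f}\geq p''(f)$, precisely mirroring the (cited) proof of Proposition~\ref{prop:Pricealgo} via the asymptotic computation already carried out in the proof of Theorem~\ref{thm:extpriceconverges}. Your auxiliary observations (existence of $p''(f)$ via the averaging argument and closure of layered permutations under point deletion) are also correct and fill in the only details the paper leaves implicit.
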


\begin{proofsketch}
  Analogous to the proof of Proposition~\ref{prop:Pricealgo}.
\end{proofsketch}

\begin{theorem}\label{thm:priceconvergesprime}
  If~$f\in\reals\fS$ is a conical combination of layered permutations, then we have
  \begin{align*}
    \lim_{n\to\infty}\cU_{n,f} & = p''(f).
  \end{align*}
\end{theorem}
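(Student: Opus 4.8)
The inequality $\lim_{n\to\infty}\cU_{n,f}\ge p''(f)$ is already part of Proposition~\ref{prop:Pricealgoprime}, and since $(\cU_{n,f})_n$ is non-increasing it suffices to prove the reverse inequality, i.e.\ that for every $\epsilon>0$ there is an $n$ with $\cU_{n,f}\le p''(f)+\epsilon$. Here the r\^ole played by Theorem~\ref{thm:extlayer} in the proof of Corollary~\ref{cor:priceconverges} is automatic: $p''(f)$ is by definition a minimum over \emph{layered} permutations. Running the averaging argument of the first Lemma with the minimum instead of the maximum — using that deleting a uniformly random point from a layered permutation again yields a layered one — shows that $(p''_N(f))_N$ is eventually non-decreasing, so $p''(f)=\lim_{N\to\infty}p''_N(f)=\sup_N p''_N(f)$ and there are layered permutations $\sigma_N$ with $p(f,\sigma_N)=p''_N(f)\to p''(f)$.

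The plan is to feed these near-optimal $\sigma_N$ into the Price polynomial, the difficulty being that a finite layered permutation may contain short layers and antilayers that $q_{n,f}$ cannot reproduce exactly. I would therefore first pass to a subsequence along which the rescaled layer-length distributions of the $\sigma_N$ converge, obtaining a limiting layered shape $\mu$ (in permuton language, a layered permuton): by compactness of the space of permutons and continuity of pattern densities, $\mu$ is layered and the density of $f$ in $\mu$ equals $\lim_N p''_N(f)=p''(f)$. Such a $\mu$ decomposes into an at most countable family of \emph{layers} (the atoms of the limiting distribution), of total relative mass $c_0\in[0,1]$, together with an \emph{antilayer region} of mass $1-c_0$.

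The key observation is that $q_{n,f}$ does represent exactly those layered shapes made of finitely many layers and no antilayer region: if such a shape has $n$ layers with relative masses recorded by a probability vector $v$, then its $f$-density is $q_{n,f}(v)$ — this is precisely the identification of a Price polynomial value with a large-layered-permutation density already used in the proof of Theorem~\ref{thm:extpriceconverges}. So I would approximate $\mu$ by shapes $\mu_{r,M}$ of this restricted kind: keep the $r$ heaviest layers of $\mu$ and replace the antilayer region (together with the discarded light layers, whose total mass tends to $0$ with $r$) by $M$ equal tiny layers placed where the antilayer region sits. A short computation — two points falling inside a run of $M$ equal tiny layers lie in distinct tiny layers with probability $1-O(1/M)$ and then induce an increasing pattern, exactly as inside an antilayer region — shows $\mu_{r,M}\to\mu$, so every pattern density converges. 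Writing $n_{r,M}$ and $v_{r,M}$ for the number of layers and mass vector of $\mu_{r,M}$, we get $\cU_{n_{r,M},f}\le q_{n_{r,M},f}(v_{r,M})$, which equals the density of $f$ in $\mu_{r,M}$ and hence tends to $p''(f)$ as $r,M\to\infty$; combining with $\lim_n\cU_{n,f}=\inf_n\cU_{n,f}\le\cU_{n_{r,M},f}$ gives $\lim_n\cU_{n,f}\le p''(f)$, and the theorem follows.

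The main obstacle is exactly this approximation step, which has two parts: (i) that the minimizers $\sigma_N$ admit a limiting layered shape whose $f$-density is exactly $p''(f)$, and (ii) that an antilayer region may be traded for arbitrarily many tiny layers without changing any pattern density in the limit. If one prefers to avoid limit objects and stay inside the combinatorial framework of this section, the same content can be packaged directly: inflate $\sigma_N$ by a factor $t$ while simultaneously splitting each antilayer run into $M$ equal layers; as $t\to\infty$ the $f$-density of the resulting layered permutation tends to a value of the form $q_{n,f}(\cdot)\ge\cU_{n,f}$, and the remaining work — driving these values down to $p''(f)$ as $N$ and $M$ grow — is precisely where (i) and (ii) reappear.
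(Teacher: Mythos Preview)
Your permuton approach is correct in outline and can be made rigorous, but it is far more elaborate than what the paper intends by ``analogous to the proof of Corollary~\ref{cor:priceconverges}''. The direct analogue is this: take a layered $\sigma_N\in\Ext''_N(f)$ with, say, $n\le N$ layers of sizes $b_1,\ldots,b_n$. For a layered $\tau$ with layer sequence $(\ell_1,\ldots,\ell_k)$ and $|\tau|=m$ one has
\[
  p(\tau,\sigma_N)=\binom{N}{m}^{-1}\!\!\sum_{\{i_1<\cdots<i_k\}\subset[n]}\prod_j\binom{b_{i_j}}{\ell_j},
  \qquad
  q_{n,\tau}\!\left(\tfrac{b}{N}\right)=\frac{m!}{N^m}\!\!\sum_{\{i_1<\cdots<i_k\}\subset[n]}\prod_j\frac{b_{i_j}^{\ell_j}}{\ell_j!}.
\]
Using $\binom{N}{m}^{-1}\ge m!/N^m$ and the elementary bound $b^{\ell}/\ell!-\binom{b}{\ell}\le C_\ell\,b^{\ell-1}$ for integers $b\ge 1$, a telescoping of $\prod_j A_j-\prod_j B_j$ together with $\sum_i b_i=N$ gives $q_{n,\tau}(b/N)-p(\tau,\sigma_N)\le C_\tau/N$, \emph{uniformly} in the vector $(b_1,\ldots,b_n)$. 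Padding with zeros to reach $N$ variables and summing over the (conical) terms of $f$ yields $p''_N(f)\ge\cU_{N,f}-O(1/N)$, and letting $N\to\infty$ finishes.

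The key point is that this $O(1/N)$ error does not depend on how the layer sizes are distributed, so the ``difficulty'' you flag---that $\sigma_N$ may have many short layers or antilayers---never arises: an antilayer of length $a$ is simply $a$ coordinates equal to $1/N$ in $q_{N,f}$, and the estimate above handles them with no extra work. There is therefore no need to extract a subsequential permuton limit, identify its atomic/diffuse decomposition, or approximate the diffuse part by $M$ tiny layers. Your route is not wrong---it would in fact prove the analogous statement for any limit-closed subclass of permutons---but for this theorem the one-line comparison between $\binom{b}{\ell}$ and $b^{\ell}/\ell!$ is both shorter and all that the paper's ``analogous'' is pointing at.
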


\begin{proofsketch}
  Analogous to the proof of Corollary~\ref{cor:priceconverges}.
\end{proofsketch}

\begin{remark*}
  An analogous result is also valid for Extended Price Polynomials if we define the
  Minimization Extended Price Bound.
\end{remark*}

%% \subsection{Minimization of monotone subsequences}

\begin{proof}[of Theorem~\ref{thm:minmono}]
  First note that, for every~$N\in\naturals^*$, we have
  \begin{align*}
    q_{N,\Id_l+\Rev_k}(x_1,x_2,\ldots,x_N) & =
    \ell!\!\!
    \sum_{I\in\binom{[N]}{\ell}}\prod_{i\in I}x_i
    + \sum_{i\in[N]}x_i^k.
  \end{align*}

  Let~$x_1,x_2,\ldots,x_N\geq 0$ with~$\sum_{i=1}^Nx_i=1$ be such that
  \begin{align*}
    q_{N,\Id_\ell+\Rev_k}(x_1,x_2,\ldots,x_N) & = \cU_{N,\Id_\ell+\Rev_k},
  \end{align*}
  and, without loss of generality, we may suppose that~$x_1\leq x_2\leq\cdots\leq
  x_N$ by symmetry of~$q_{N,\Id_\ell+\Rev_k}$.

  Our objective is to prove that~$\cU_{N,\Id_\ell+\Rev_k} \geq
  \cU_{N-1,\Id_\ell+\Rev_k}$ whenever~$N\geq\ell$. To do that, we group the summands
  in~$q_{N,\Id_\ell+\Rev_k}$ and~$q_{N-1,\Id_\ell+\Rev_k}$ according to which
  of~$x_1$ and/or~$x_2$ they contain. So note that
  \begin{align*}
    & \hphantom{{}={}}
    q_{N,\Id_\ell+\Rev_k}(x_1,x_2,\ldots,x_N)
    \\
    & =
    \ell!\left(\!
    x_1x_2\!\!\!\!\!\!\!\!
    \sum_{I\in\binom{[N]\setminus\{1,2\}}{\ell-2}}\prod_{i\in I}x_i
    + (x_1+x_2)\!\!\!\!\!\!\!\!
    \sum_{I\in\binom{[N]\setminus\{1,2\}}{\ell-1}}\prod_{i\in I}x_i
    +\!\!\!\!\!\!\!
    \sum_{I\in\binom{[N]\setminus\{1,2\}}{\ell}}\prod_{i\in I}x_i
    \!\right)
    + x_1^k+x_2^k +\!\!\!\!\!\!\!
    \sum_{i\in[N]\setminus\{1,2\}}
    \!\!\!\!\!\!
    x_i^k.
  \end{align*}

  On the other hand, we have
  \begin{align*}
    & \hphantom{{}={}}
    q_{N-1,\Id_\ell+\Rev_k}(x_1+x_2,x_3,x_4,\ldots,x_N)
    \\
    & =
    \ell!\left(\!
    (x_1+x_2)\!\!\!\!\!\!\!\!
    \sum_{I\in\binom{[N]\setminus\{1,2\}}{\ell-1}}\prod_{i\in I}x_i
    +\!\!\!\!\!\!\!
    \sum_{I\in\binom{[N]\setminus\{1,2\}}{\ell}}\prod_{i\in I}x_i
    \!\right)
    + (x_1+x_2)^k +\!\!\!\!\!\!\!
    \sum_{i\in[N]\setminus\{1,2\}}
    \!\!\!\!\!\!
    x_i^k.
  \end{align*}

  Subtracting the polynomials then yields
  \begin{align*}
    & \hphantom{{}={}}
    q_{N,\Id_\ell+\Rev_k}(x_1,x_2,\ldots,x_N) -
    q_{N-1,\Id_\ell+\Rev_k}(x_1+x_2,x_3,x_4,\ldots,x_N)
    \\
    & =
    \ell!x_1x_2\!\!\!\!\!\!\!\!
    \sum_{I\in\binom{[N]\setminus\{1,2\}}{\ell-2}}\prod_{i\in I}x_i
    + x_1^k + x_2^k - (x_1+x_2)^k
    \\
    & \geq
    \ell!x_1x_2\!\!\!\!\!\!\!\!
    \sum_{I\in\binom{[N]\setminus\{1,2\}}{\ell-2}}
    \!\!\!\!\!\!
    x_2^{\ell-2}
    + x_1^k + x_2^k - (x_1+x_2)^k
    \\
    & =
    \ell!x_1x_2^{\ell-1}\binom{N-2}{\ell-2}
    - \sum_{v=1}^{k-1}\binom{k}{v}x_1^vx_2^{k-v}
    \\
    & \geq
    \ell!x_1x_2^{\ell-1}\binom{N-2}{\ell-2}
    - \sum_{v=1}^{k-1}\binom{k}{v}x_1x_2^{k-1}
    \\
    & =
    x_1x_2^{\ell-1}\left(\ell!\binom{N-2}{\ell-2} - (2^k - 2)x_2^{k-\ell}\right).
  \end{align*}

  Now, since~$x_2$ is the second smallest of the~$x_i$'s, we have~$x_2\leq 2/N$,
  hence, since~$k\geq\ell$, we have
  \begin{align*}
    & \hphantom{{}={}}
    q_{N,\Id_\ell+\Rev_k}(x_1,x_2,\ldots,x_N) -
    q_{N-1,\Id_\ell+\Rev_k}(x_1+x_2,x_3,x_4,\ldots,x_N)
    \\
    & \geq
    x_1x_2^{\ell-1}\left(\ell!\binom{N-2}{\ell-2} - (2^k - 2)\left(\frac{2}{N}\right)^{k-\ell}\right).
  \end{align*}

  To prove that this value is non-negative, we consider three cases.
  \begin{proofcases}
    \case If~$N\geq\ell+1$, then~$N\geq 4$, hence
    \begin{align*}
      \ell!\binom{N-2}{\ell-2} - (2^k - 2)\left(\frac{2}{N}\right)^{k-\ell}
      & \geq
      \ell!\binom{N-2}{\ell-2} - (2^k - 2)2^{\ell-k}
      \\
      & \geq
      \ell!\frac{(N-\ell+1)^{\ell-2}}{(\ell-2)!} - 2^\ell
      \\
      & = \ell(\ell-1)(N-\ell+1)^{\ell-2} - 2^\ell,
    \end{align*}
    and since~$\ell\geq 3$ and~$N-\ell+1\geq 2$, this value is non-negative.

    \case If~$N=\ell$ and~$\ell\geq 4$, then
    \begin{align*}
      \ell!\binom{N-2}{\ell-2} - (2^k - 2)\left(\frac{2}{N}\right)^{k-\ell}
      & \geq
      \ell!\binom{N-2}{\ell-2} - (2^k - 2)2^{\ell-k}
      \\
      & =
      \ell! - 2^\ell + 2^{\ell-k+1},
    \end{align*}
    which is also non-negative.

    \case If~$N = \ell = 3$, then we have
    \begin{align*}
      & \hphantom{{}={}}
      q_{3,\Id_3+\Rev_k}(x_1,x_2,x_3) - q_{2,\Id_3,\Rev_k}(x_1+x_2,x_3)
      \\
      & =
      3!x_1x_2x_3 + x_1^k + x_2^k - (x_1 + x_2)^k
      \\
      & = 6x_1x_2x_3 - \sum_{v=1}^{k-1}\binom{k}{v}x_1^v x_2^{k-v}
      \\
      & = x_1x_2\left(6x_3 - k x_1^{k -2} - kx_2^{k-2}
      - \sum_{v=2}^{k-2}\binom{k}{v}x_1^{v-1} x_2^{k-v-1}\right)
      \\
      & = x_1x_2\left(6x_3 - k x_1^{k-2} - kx_2^{k-2}
      - \sum_{v=1}^{k-3}\binom{k}{v + 1}x_1^{v} x_2^{k-v-2}\right)
      \\
      & = x_1x_2\left(6x_3 - k\sum_{v=0}^{k-2}\binom{k-2}{v}x_1^vx_2^{k-2-v}
      + k\sum_{v=1}^{k-3}\binom{k-2}{v}x_1^vx_2^{k-2-v}
      - \sum_{v=1}^{k-3}\binom{k}{v+1}x_1^{v} x_2^{k-2-v}\right)
      \\
      & = x_1x_2\left(
      6x_3 - k(x_1 + x_2)^{k-2}
      + \sum_{v=1}^{k-3}\left(k\binom{k-2}{v} - \binom{k}{v+1}\right)x_1^{v} x_2^{k-v}
      \right),
    \end{align*}
    where in the fourth equality, we applied the change of variables~$v\to v+1$.

    Let us prove that this value is non-negative.

    Since~$x_3$ is the greatest of the~$x_i$'s, we have~$x_3\geq 1/3$,
    hence~$x_1+x_2\leq 2/3$. Since~$k(2/3)^{k-2}$ is a non-increasing function of~$k$
    when~$k\geq 2$, we have
    \begin{align*}
      6x_3 - k(x_1 + x_2)^{k-2} & \geq 2 - k\left(\frac{2}{3}\right)^{k-2}
      \geq 2 - 2\left(\frac{2}{3}\right)^{2 - 2}  =  0.
    \end{align*}

    It remains to prove that the sum~$\sum_{v=1}^{k-3}\bigl(k\binom{k-2}{v} -
    \binom{k}{v+1}\bigr)$ is non-negative. Let us prove something slightly stronger,
    namely let us prove that~$k\binom{k-2}{v}-\binom{k}{v+1}\geq 0$ for~$k\geq 2$
    and~$1\leq v\leq k-3$. So note that
    \begin{align*}
      k\binom{k-2}{v} - \binom{k}{v+1}
      %% & = \frac{k(k-2)!}{v!(k-2-v)!}\left(1 - \frac{k-1}{(v+1)(k - v - 1)}\right)
      %% \\
      & = \frac{k(k-2)!}{(v+1)!(k-1-v)!}\left((v+1)(k-v-1) - (k - 1)\right)
      \\
      & = \frac{k(k-2)!}{(v+1)!(k-1-v)!}\left(v(k - v - 2)\right)
      \\
      & \geq \frac{k(k-2)!}{(v+1)!(k-1-v)!}\left(v(k - (k - 3) - 2)\right)
      \\
      & \geq 0.
    \end{align*}
  \end{proofcases}

  Therefore, by Proposition~\ref{prop:Pricealgoprime} and
  Theorem~\ref{thm:priceconvergesprime}, we
  have~$p''(\Id_\ell+\Rev_k)=\cU_{\ell-1,\Id_\ell+\Rev_k}$.

  Now, by the definition of the Minimization Price Bound and a straightforward
  analysis argument, we have
  \begin{align*}
    \cU_{\ell-1,\Id_\ell+\Rev_k} & = \min\left\{\sum_{j=1}^{\ell-1}x_i^k : \sum_{j=1}^{\ell-1}x_j = 1
    \text{ and } \forall j\in[\ell-1], x_j\geq 0\right\}
    = \frac{1}{(\ell-1)^{k-1}}.
    \qedhere
  \end{align*}
\end{proof}

\section{Concluding remarks}
\label{sec:conc}

As we mentioned in Section~\ref{sec:usinggeneralizations},
Theorem~\ref{thm:smallanti} can be seen as a generalization of the following theorem.

\begin{theorem}[{\citet[Theorem~3.3]{Hasto:PackingOtherLayered}}]
\label{thm:hasto}
  If~$\sigma\in\fS$ is a layered permutation whose layer sequence is~$(\ell_1,\ell_2,\ldots,\ell_k)$,
  and~$\ell_1\leq\ell_2\leq\cdots\leq\ell_k$ and~$2^{\ell_1}\geq 1 + k$, then~$p(\sigma) = \cL_{k,\sigma}$.
\end{theorem}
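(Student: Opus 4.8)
The plan is to follow the same blueprint as the proof of Theorem~\ref{thm:smallanti}, but now without any antilayer variable, which makes everything one degree simpler. We may assume $k\ge 2$ (for $k\le 1$, $\sigma$ is empty or equal to $\Rev_{\ell_1}$ and the claim reduces to the obvious $p(\sigma)=1$), and then $2^{\ell_1}\ge k+1\ge 3$ forces $\ell_1\ge 2$, which we use freely below. By Proposition~\ref{prop:Pricealgo} the sequence $(\cL_{N,\sigma})_{N\ge 1}$ is non-decreasing and, by Corollary~\ref{cor:priceconverges}, it converges to $p(\sigma)$; hence it suffices to prove
\begin{align*}
  \cL_{N,\sigma}\le\cL_{N-1,\sigma}\qquad\text{for every }N>k,
\end{align*}
since together with monotonicity this forces $\cL_{N,\sigma}=\cL_{k,\sigma}$ for every $N\ge k$, and therefore $p(\sigma)=\cL_{k,\sigma}$.

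Fix $N>k$. By the same swap argument as in the proof of Lemma~\ref{lem:xnondec} (it applies to $q_{N,\sigma}$ because $\ell_1\le\cdots\le\ell_k$), we may choose $x_1\le x_2\le\cdots\le x_N$ with $\sum_{i=1}^N x_i=1$ and $q_{N,\sigma}(x_1,\ldots,x_N)=\cL_{N,\sigma}$. The step to perform is to \emph{merge the two smallest coordinates}: since $(x_1+x_2,x_3,\ldots,x_N)$ is a feasible point of order $N-1$, we have $\cL_{N-1,\sigma}\ge q_{N-1,\sigma}(x_1+x_2,x_3,\ldots,x_N)$, so it is enough to show $q_{N-1,\sigma}(x_1+x_2,x_3,\ldots,x_N)\ge q_{N,\sigma}(x_1,\ldots,x_N)$. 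Splitting each of these two polynomials according to whether the $k$-set it sums over contains $1$, contains $2$, or both — and using that $1$ and $2$, when present, are the smallest indices and therefore carry the exponents $\ell_1$ and $\ell_2$ — one computes (the terms indexed by $k$-sets disjoint from $\{1,2\}$ being common to both and cancelling) that the difference $q_{N-1,\sigma}(x_1+x_2,x_3,\ldots,x_N)-q_{N,\sigma}(x_1,\ldots,x_N)$ equals
\begin{align*}
  C\bigl[\,\bigl((x_1+x_2)^{\ell_1}-x_1^{\ell_1}-x_2^{\ell_1}\bigr)A-x_1^{\ell_1}x_2^{\ell_2}B\,\bigr],
\end{align*}
where $C=\binom{|\sigma|}{\ell_1,\ldots,\ell_k}$, $A=\sum_{J\in\binom{[N]\setminus\{1,2\}}{k-1}}x_J^{\ell_{[2..k]}}$ and $B=\sum_{J\in\binom{[N]\setminus\{1,2\}}{k-2}}x_J^{\ell_{[3..k]}}$.

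It thus remains to prove $\bigl((x_1+x_2)^{\ell_1}-x_1^{\ell_1}-x_2^{\ell_1}\bigr)A\ge x_1^{\ell_1}x_2^{\ell_2}B$; assume $x_1>0$ (otherwise both sides vanish). The binomial theorem together with $0\le x_1\le x_2$ gives
\begin{align*}
  (x_1+x_2)^{\ell_1}-x_1^{\ell_1}-x_2^{\ell_1}=\sum_{v=1}^{\ell_1-1}\binom{\ell_1}{v}x_1^v x_2^{\ell_1-v}\ge(2^{\ell_1}-2)\,x_1^{\ell_1-1}x_2,
\end{align*}
since every monomial with $1\le v\le\ell_1-1$ is at least $x_1^{\ell_1-1}x_2$; so it is enough to prove $(2^{\ell_1}-2)A\ge x_1 x_2^{\ell_2-1}B$, and since $2^{\ell_1}-2\ge k-1$ it suffices to show $A\ge\frac{x_1 x_2^{\ell_2-1}}{k-1}B$. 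For this I would write each $(k-1)$-set $J$ occurring in $A$ as $J=J'\cup\{j\}$ with $|J'|=k-2$ and $j\in([N]\setminus\{1,2\})\setminus J'$ (each $J$ arising $k-1$ times), apply Lemma~\ref{lem:incmax} — putting the smallest available exponent $\ell_2$ on $x_j$ — to get $x_{J'\cup\{j\}}^{\ell_{[2..k]}}\ge x_j^{\ell_2}x_{J'}^{\ell_{[3..k]}}$, and then observe that, because $N>k$, the set $([N]\setminus\{1,2\})\setminus J'$ is non-empty and that $x_j\ge x_2$ for every $j\ge 3$, so that $\sum_{j\in([N]\setminus\{1,2\})\setminus J'}x_j^{\ell_2}\ge x_2^{\ell_2}\ge x_1 x_2^{\ell_2-1}$; summing over $J'$ then yields $A\ge\frac{x_1 x_2^{\ell_2-1}}{k-1}B$, as required.

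The argument is essentially just bookkeeping of monomials, and the hypothesis $2^{\ell_1}\ge k+1$ enters only at the final comparison. The point that I expect to require the most care is making the two successive crude estimates interlock: first replacing the binomial expansion of $(x_1+x_2)^{\ell_1}-x_1^{\ell_1}-x_2^{\ell_1}$ by the single term $(2^{\ell_1}-2)x_1^{\ell_1-1}x_2$, and then using Lemma~\ref{lem:incmax} to pass from the exponent pattern $(\ell_2,\ldots,\ell_k)$ on $k-1$ variables to $(\ell_3,\ldots,\ell_k)$ on $k-2$ variables — one must arrange that exactly the leftover factor $x_1 x_2^{\ell_2-1}$ is produced, so that the constant $2^{\ell_1}-2$ ends up being compared with $k-1$ and not with something larger. (Note that this statement asserts only $p(\sigma)=\cL_{k,\sigma}$; the closed-form value of $\cL_{k,\sigma}$, if desired, follows from a routine Lagrange-multiplier computation exactly as at the end of the proof of Theorem~\ref{thm:smallanti}.)
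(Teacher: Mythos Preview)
The paper itself does not prove this cited result, so the relevant comparison is with the proof of Theorem~\ref{thm:smallanti}, which the paper explicitly models on H\"{a}st\"{o}'s original argument. Your proof is correct and is exactly that argument specialized to the case with no antilayer variable: Lemma~\ref{lem:yxrel} becomes vacuous (monotonicity of the $x_i$ already gives what is needed), your merge of $x_1$ and $x_2$ plays the role of the merge of $y$ and $x_1$ in Lemma~\ref{lem:onlyklayers}, and the hypothesis $2^{\ell_1}-2\ge k-1$ enters at precisely the step where $2^a-a-1\ge k$ is used there.
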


As we also mentioned, H\"{a}st\"{o} presented the argument of the proof of
Corollary~\ref{cor:smallantipermut} to give the following corollary.

\begin{corollary}[{\citet[Lemma~3.4]{Hasto:PackingOtherLayered}}]
\label{cor:hasto}
  If~$\sigma\in\fS$ is a layered permutation with~$k$ layers, all of which have
  lengths greater or equal to~$\ell$ and we have~$2^\ell\geq 1+k$, then~$p(\sigma) =
  \cL_{k,\sigma}$.
\end{corollary}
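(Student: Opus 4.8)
The plan is to run the same reduction used in the proof of Corollary~\ref{cor:smallantipermut}, only in the simpler setting without antilayers: first replace $\sigma$ by the permutation $\sigma'$ whose layer sequence is the \emph{sorted} version of the layer sequence of $\sigma$, for which Theorem~\ref{thm:hasto} already gives $p(\sigma')=\cL_{k,\sigma'}$, and then transfer this value back to $\sigma$. Concretely, write $\ell_1\leq\ell_2\leq\cdots\leq\ell_k$ for the layer lengths of $\sigma$ in non-decreasing order (so $\ell_1\geq\ell$) and let $\sigma'$ be the layered permutation with layer sequence $(\ell_1,\ldots,\ell_k)$.

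The first step is to show $p(\sigma)\leq p(\sigma')$. Fix $N\in\naturals^*$ and let $x=(x_1,\ldots,x_N)$ be an optimal point for $q_{N,\sigma}$ on the simplex; writing $r=(r_1,\ldots,r_k)$ for the (unsorted) layer sequence of $\sigma$, one has $q_{N,\sigma}(x)=\frac{|\sigma|!}{\prod_i r_i!}\sum_{I\in\binom{[N]}{k}}x_I^{r_{[k]}}$. Applying Lemma~\ref{lem:incmax} to each summand replaces $x_I^{r_{[k]}}$ by $(x_I)_\leq^{(r_{[k]})_\leq}$, and then the elementary observation that $\{(x_I)_\leq:I\in\binom{[N]}{k}\}$ and $\{(x_\leq)_I:I\in\binom{[N]}{k}\}$ coincide as multisets (both enumerate the non-decreasing $k$-subtuples of the multiset of values of $x$) rewrites the sum as $q_{N,\sigma'}(x_\leq)\leq\cL_{N,\sigma'}$. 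Hence $\cL_{N,\sigma}\leq\cL_{N,\sigma'}$ for every $N$, and letting $N\to\infty$ (Corollary~\ref{cor:priceconverges}) gives $p(\sigma)\leq p(\sigma')$; this is exactly the computation already carried out inside the proof of Corollary~\ref{cor:smallantipermut}.

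To finish, apply Theorem~\ref{thm:hasto} to $\sigma'$: its layer sequence is non-decreasing with smallest entry $\ell_1\geq\ell$, so $2^{\ell_1}\geq 2^\ell\geq 1+k$ and therefore $p(\sigma')=\cL_{k,\sigma'}$. Now observe that in the order-$k$ Price Polynomial the only index set occurring in the sum is $I=[k]$, so $q_{k,\sigma}(x_1,\ldots,x_k)=|\sigma|!\prod_{j=1}^k x_j^{\ell_j}/\ell_j!$ is a single monomial; since $\sigma$ and $\sigma'$ have the same multiset of layer lengths, relabelling the variables shows $\cL_{k,\sigma}=\cL_{k,\sigma'}$ (for the direction needed it suffices to permute an optimizer of $q_{k,\sigma'}$ so that the variable carrying exponent $\ell_j$ receives the value assigned by that optimizer to its $j$-th layer, giving $\cL_{k,\sigma}\geq\cL_{k,\sigma'}$). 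Combining this with Proposition~\ref{prop:Pricealgo} produces the sandwich
\[
  \cL_{k,\sigma}\ \leq\ p(\sigma)\ \leq\ p(\sigma')\ =\ \cL_{k,\sigma'}\ =\ \cL_{k,\sigma},
\]
forcing all inequalities to be equalities, so $p(\sigma)=\cL_{k,\sigma}$.

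I do not anticipate a real obstacle here: the only mildly delicate point is the multiset bookkeeping in the rearrangement step, and that is already spelled out verbatim in the proof of Corollary~\ref{cor:smallantipermut}, so the present argument is essentially an appeal to that computation together with Theorem~\ref{thm:hasto} and the fact that the order-$k$ Price Bound depends only on the multiset of layer lengths.
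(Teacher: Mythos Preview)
Your proposal is correct and follows exactly the approach the paper indicates: the paper does not give a separate proof of this corollary but remarks that it is obtained by the very argument written out in the proof of Corollary~\ref{cor:smallantipermut} (originally due to H\"{a}st\"{o}), which is precisely the rearrangement-plus-sandwich you carry out, with Theorem~\ref{thm:hasto} playing the role that Theorem~\ref{thm:smallanti} plays there. The observation that the order-$k$ Price Polynomial is a single monomial, so that $\cL_{k,\sigma}=\cL_{k,\sigma'}$, is the direct analogue of the permutation-of-the-optimal-point step in Corollary~\ref{cor:smallantipermut}.
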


The condition~$2^\ell\geq 1 + k$ of the above theorem can be seen as an analogous the
condition~$2^a - a - 1 \geq k$ of Theorem~\ref{thm:smallanti}. The fact that~$a+k$
is the number of layers of the permutation of block
sequence~$(\widehat{a},\ell_1,\ell_2,\ldots,\ell_k)$ leads us to the following
conjecture.

\begin{conjecture}
  Let~$\sigma\in\fS$ be a layered permutation with~$k$ layers and a total of~$b$
  blocks and suppose that every block of~$\sigma$ has length greater or equal
  to~$\ell$. If we have~$2^\ell \geq 1+k$, then there exists a
  sequence~$(\tau_n)_{n\in\naturals}$ of layered permutations of~$b$ blocks such
  that~$|\tau_n|<|\tau_{n+1}|$ for every~$n\in\naturals$ and
  \begin{align*}
    \lim_{n\to\infty}p(\sigma,\tau_n) = p(\sigma).
  \end{align*}
\end{conjecture}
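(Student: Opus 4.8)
\medskip

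The plan is to extend the machinery of Section~\ref{sec:genprice} by allowing the ``arbitrarily large permutation'' to have an \emph{arbitrary} block pattern, rather than the alternating antilayer/layer pattern built into the Extended Price Polynomial. For a layered permutation $\sigma$ and a word $\eta=(\eta_1,\ldots,\eta_M)\in\{0,1\}^M$ recording which of $M$ template slots are antilayers, define a polynomial $g^\eta_\sigma(t_1,\ldots,t_M)$ by the same formula as $g_{n,\sigma}$ but summing over the order-preserving injections of the quasi-blocks of $\sigma$ into $[M]$ that are compatible with $\eta$ (a layer quasi-block into a layer slot, an antilayeroid into an antilayer slot, a singleton anywhere). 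Any word of length $M$ is a subword of the alternating pattern of order $n$ once $n\geq M$, and inserting zeros in the unused slots gives $g^\eta_\sigma(t)\leq\cL_{n,\emptyset,\sigma}$; conversely $g_{n,\sigma}$ is itself an instance of $g^\eta_\sigma$, so by Theorem~\ref{thm:extpriceconverges} (taking $W_n=\emptyset$) we get $p(\sigma)=\sup_{M,\eta}\max_{t}g^\eta_\sigma(t)$, the inner maximum being over the simplex $\{t\geq 0:\sum_i t_i=1\}$. Writing $(c_i,\xi_i)_{i=1}^b$ for the block sequence of $\sigma$, the conjecture is then equivalent to the assertion that this supremum is already attained at $M=b$ with $\eta=(\xi_i)_{i=1}^b$; any maximizer $t$ of $g^{(\xi_i)_i}_\sigma$ over the $b$-simplex has all coordinates positive (otherwise $g^{(\xi_i)_i}_\sigma$ would vanish there, since some block of $\sigma$ could not be placed), so the layered permutations with block sequence $(\lfloor t_1N\rfloor,\ldots,\lfloor t_bN\rfloor)$ and the block types of $\sigma$ have exactly $b$ blocks, and their $\sigma$-density tends to $p(\sigma)$ as $N\to\infty$, yielding the desired sequence $(\tau_n)$.

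To prove the assertion I would reproduce, for a template of arbitrary pattern, the reductions behind Theorem~\ref{thm:hasto} and Lemmas~\ref{lem:xnondec}--\ref{lem:onlyklayers}. First a \emph{sorting} step: by Lemma~\ref{lem:incmax}, exactly as in Lemma~\ref{lem:xnondec}, within each maximal run of consecutive layer slots the variables may be made non-decreasing without decreasing $g^\eta_\sigma$. Then a \emph{merging} step: as long as the template has more than $b$ blocks, apply whichever of the following moves is available — merging the two smallest layer slots of an over-long layer run (as in the reduction behind Theorem~\ref{thm:hasto}), merging the smallest layer slot of a run into an adjacent antilayer slot (as in Lemma~\ref{lem:onlyklayers}), or sending to zero a layer slot that lies between two antilayer slots and is not needed to host a layer of $\sigma$, thereby fusing those antilayers. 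In each case, grouping the affected summands as in the proof of Lemma~\ref{lem:onlyklayers} rewrites the change in $g^\eta_\sigma$ as a non-negative combination of monomials scaled by a factor of the shape $2^\ell-(\text{number of competing layers})-O(1)$, which is $\geq 0$ by the hypotheses that all blocks of $\sigma$ have length at least $\ell$ and $2^\ell\geq 1+k$. Iterating until no move applies leaves a template with $b$ blocks and the block pattern of $\sigma$, so $p(\sigma)=\max_t g^{(\xi_i)_i}_\sigma(t)$; a Lagrange-multiplier computation as at the end of the proof of Theorem~\ref{thm:smallanti} then gives the explicit value $p(\sigma)=\dfrac{|\sigma|!}{|\sigma|^{|\sigma|}}\prod_{i=1}^b\dfrac{c_i^{c_i}}{c_i!}$ and makes $(\tau_n)$ explicit.

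The main obstacle is the rigidity of the antilayers. In Theorem~\ref{thm:smallanti} there is a single antilayer, which the Extended Price Polynomial can keep in one fixed slot, and Lemma~\ref{lem:yxrel} controls its size relative to the $k$-th largest layer through a one-parameter calculus argument; with $b-k$ antilayers in prescribed positions this control must be obtained simultaneously for all of them, the sorting step cannot move a layer past an antilayer, so the ``two smallest layers'' eligible for a merge are only the two smallest \emph{within one run}, and the key merging inequality must be proved from this weaker local data. One must also check that the merges can always be scheduled so as to terminate exactly at the block pattern $(\xi_i)_i$ of $\sigma$ — not collapsing an antilayer region entirely, nor leaving a spurious ``spacer'' layer — and the antilayer-fusing move (which trades the loss of a layer slot against the fact, true by the multinomial theorem, that consolidating antilayeroid masses never decreases their combined contribution) is the delicate case where the hypothesis $2^\ell\geq 1+k$ is genuinely used. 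I expect this scheduling together with the multi-antilayer analogue of Lemma~\ref{lem:yxrel} to be where the real work concentrates; once the reduction is in place, the value and the sequence $(\tau_n)$ follow as in the proof of Theorem~\ref{thm:smallanti}.
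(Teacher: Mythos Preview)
The statement you are addressing is stated in the paper as a \emph{conjecture}; the paper offers no proof of it, so there is nothing to compare your argument against. What you have written is an attempt to settle an open problem, and it should be assessed as such.

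As a research plan it is natural: you correctly recognise that the conjecture simultaneously generalises Theorem~\ref{thm:hasto} and Theorem~\ref{thm:smallanti}, and that the right framework is a further extension of the Extended Price Polynomial to templates with an arbitrary block pattern $\eta$. The identity $p(\sigma)=\sup_{M,\eta}\max_t g^\eta_\sigma(t)$ is sound, and once the supremum is known to be attained at $\eta=(\xi_i)_{i=1}^b$ the construction of $(\tau_n)$ is routine. But the proposal has genuine gaps, most of which you yourself flag. The sorting step behind Lemma~\ref{lem:xnondec} uses that the layer lengths of $\sigma$ are non-decreasing; the conjecture imposes no such monotonicity, and with several antilayers interspersed there is no canonical ``sorted'' $\sigma$ to reduce to (the trick of Corollary~\ref{cor:smallantipermut} does not obviously preserve the block pattern when there is more than one antilayer). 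The three merging moves are only asserted to produce a change of sign governed by ``$2^\ell-(\text{competing layers})-O(1)$''; none of these inequalities is actually established, and the antilayer-fusing move in particular has no analogue in the paper --- you are removing a slot rather than transferring mass between two existing slots as in Lemmas~\ref{lem:yxrel} and~\ref{lem:onlyklayers}, and you give no argument that the loss of a potential layer host is compensated. Finally, the scheduling problem --- guaranteeing that the merges terminate exactly at the block pattern of $\sigma$, neither collapsing a needed antilayer nor leaving a spurious layer --- together with the multi-antilayer version of Lemma~\ref{lem:yxrel} is, as you say, where the real work lies, and it is not done here. What you have is a credible outline for attacking the conjecture, not a proof.
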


Furthermore, \citet[Theorem~2.7]{AlbertAtkinsonHandleyHoltonStromquist:PackingPermutations} proved
that if~$\sigma\in\fS$ is a layered permutation with every layer of length at
least~$2$, then there is a sequence~$(\tau_n)_{n\in\naturals}$ of layered permutations
with~$|\tau_n|<|\tau_{n+1}|$ for every~$n\in\naturals$ and such
that~$\lim_{n\to\infty}p(\sigma,\tau_n) = p(\sigma)$ and the number of layers of
the~$\tau_n$'s are uniformly bounded. For clarity, we state below a simplified
version of their theorem.

\begin{theorem}[{\citet[Theorem~2.7]{AlbertAtkinsonHandleyHoltonStromquist:PackingPermutations}}]
  If~$\sigma\in\fS$ is a layered permutation with~$k$ layers and every layer of
  length at least~$2$, then there exists~$K$ such that~$p(\sigma) = \cL_{k+K,\sigma}$.
\end{theorem}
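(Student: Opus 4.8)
The plan is to show that the Price bounds of~$\sigma$ eventually stabilize. Since $\cL_{n,\sigma}\le\cL_{n+1,\sigma}\le p(\sigma)$ and $\lim_n\cL_{n,\sigma}=p(\sigma)$ by Proposition~\ref{prop:Pricealgo} and Corollary~\ref{cor:priceconverges}, it suffices to find $K$ with $\cL_{n,\sigma}\le\cL_{k+K,\sigma}$ for all~$n$, which then forces $\cL_{n,\sigma}=\cL_{k+K,\sigma}=p(\sigma)$ for all $n\ge k+K$. First I would reduce to the case where the layer sequence of~$\sigma$ is non-decreasing, say $(\ell_1,\ldots,\ell_k)$ with $2\le\ell_1\le\cdots\le\ell_k$: a rearrangement argument of the type used for Corollary~\ref{cor:smallantipermut} and Corollary~\ref{cor:hasto}, matching larger coordinates with larger layers via Lemma~\ref{lem:incmax}, handles this. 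Then, by the adjacent-swap argument of Lemma~\ref{lem:xnondec} applied to the plain Price Polynomial~$q_{n,\sigma}$, for each~$n$ there is a maximizer $x=(x_1,\ldots,x_n)$ of $q_{n,\sigma}$ on the simplex with $0\le x_1\le x_2\le\cdots\le x_n$.

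The core of the proof is a merging lemma: there is $K=K(\ell_1,\ldots,\ell_k)$ such that, whenever $N>k+K$ and $0\le x_1\le\cdots\le x_N$ with $\sum_j x_j=1$,
\[
  q_{N-1,\sigma}(x_1+x_2,x_3,\ldots,x_N)\ \ge\ q_{N,\sigma}(x_1,x_2,\ldots,x_N),
\]
that is, merging the two smallest host layers never decreases the density of~$\sigma$; applying this to the maximizer above gives $\cL_{N,\sigma}\le\cL_{N-1,\sigma}$ for $N>k+K$ and finishes the proof. To prove the merging lemma I would expand both sides exactly as in the proof of Lemma~\ref{lem:onlyklayers}, grouping the monomials of $q_{N,\sigma}$ according to whether a monomial contains both, exactly one, or neither of $x_1,x_2$ (if $x_1$ or $x_2$ appears it is in the slot of the smallest chosen index, hence carries the smallest exponent~$\ell_1$). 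Letting~$B$ be the sum over $(k-1)$-subsets of $\{3,\ldots,N\}$ of the monomials with exponents $\ell_2,\ldots,\ell_k$, and~$C$ the sum over $(k-2)$-subsets of $\{3,\ldots,N\}$ with exponents $\ell_3,\ldots,\ell_k$, the difference above equals
\[
  \binom{|\sigma|}{\ell_1,\ldots,\ell_k}\Bigl(\bigl((x_1+x_2)^{\ell_1}-x_1^{\ell_1}-x_2^{\ell_1}\bigr)B-x_1^{\ell_1}x_2^{\ell_2}C\Bigr).
\]
Since $\ell_1\ge2$ we have $(x_1+x_2)^{\ell_1}-x_1^{\ell_1}-x_2^{\ell_1}\ge\ell_1x_1x_2^{\ell_1-1}$, so, using $x_1\le x_2$, the difference is bounded below by $\binom{|\sigma|}{\ell_1,\ldots,\ell_k}\,x_1x_2^{\ell_1-1}\bigl(\ell_1 B-x_2^{\ell_2}C\bigr)$, and it remains to show $\ell_1 B\ge x_2^{\ell_2}C$ once $N>k+K$.

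This last polynomial inequality is, I expect, the main obstacle. The idea is that~$B$ has vastly more monomials than~$C$: sending each monomial of $x_2^{\ell_2}C$, indexed by a $(k-2)$-subset~$S$ of $\{3,\ldots,N\}$, to a monomial of~$B$ on $S\cup\{i\}$ for a suitable index $i\notin S$, and using Lemma~\ref{lem:incmax} to see that the replacement only enlarges the monomial (each $(k-1)$-subset being produced $k-1$ times), one gets $\bigl(\sum_{j=3}^N x_j^{\ell_2}\bigr)C\le(k-1)B+(\text{error})$, where the error collects the monomials in which~$i$ would have to lie in~$S$. Bounding that error — here a case split according to whether the largest layer~$x_N$ dominates seems necessary, and the hypothesis that \emph{every} layer has length at least~$2$ is what one uses level by level — and combining with $\sum_{j=3}^N x_j^{\ell_2}\ge(N-2)x_2^{\ell_2}$ yields $\ell_1 B\ge x_2^{\ell_2}C$ as soon as~$N$ exceeds an explicit threshold $k+K$; for $N\le k+K$ there is nothing to prove. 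With the merging lemma in hand the theorem is immediate, and one could in principle make $p(\sigma)=\cL_{k+K,\sigma}$ explicit by a Lagrange-multiplier analysis of the resulting finite optimization, as at the end of the proof of Theorem~\ref{thm:smallanti}, though the statement only asserts that some~$K$ exists.
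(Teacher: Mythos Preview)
The paper does not supply its own proof of this theorem; it is quoted from \cite{AlbertAtkinsonHandleyHoltonStromquist:PackingPermutations} in the concluding remarks, so there is no in-paper argument to compare against. That said, your plan is essentially the approach the paper attributes to H\"{a}st\"{o} and Warren (see Theorem~\ref{thm:hasto} and the subsequent Warren bound~$\kappa(\sigma)\le(k-1)/(2^{\ell_1}-2)$), carried out via Price polynomials rather than the combinatorial argument of Albert \emph{et al.}; so your route is different from the original but in the same spirit as the paper's own techniques.

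Your outline is correct, and in fact cleaner than you fear: the ``error'' term you anticipate in the comparison of~$B$ and~$C$ is not needed. Instead of summing over all $j\in\{3,\ldots,N\}$, sum only over $j\notin S$. For each $(k-2)$-set $S\subset\{3,\ldots,N\}$ and each $j\in\{3,\ldots,N\}\setminus S$, Lemma~\ref{lem:incmax} gives $x_{S\cup\{j\}}^{(\ell_2,\ldots,\ell_k)}\ge x_j^{\ell_2}x_S^{(\ell_3,\ldots,\ell_k)}$, and since each $(k-1)$-set is produced $k-1$ times one obtains directly
\[
  (k-1)\,B \;\ge\; \sum_S x_S^{(\ell_3,\ldots,\ell_k)}\!\!\sum_{j\notin S} x_j^{\ell_2}
  \;\ge\; (N-k)\,x_2^{\ell_2}\,C,
\]
because there are $N-k$ admissible~$j$ and each satisfies $x_j\ge x_2$. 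No case split on whether $x_N$ dominates is required, and the hypothesis $\ell_1\ge 2$ enters only through $(x_1+x_2)^{\ell_1}-x_1^{\ell_1}-x_2^{\ell_1}>0$. Bounding that quantity from below by $(2^{\ell_1}-2)x_1^{\ell_1-1}x_2$ (use $x_1\le x_2$ termwise) instead of $\ell_1 x_1 x_2^{\ell_1-1}$ yields the merging inequality as soon as $N\ge k+(k-1)/(2^{\ell_1}-2)$, recovering Warren's explicit bound on~$K$. With the merging lemma thus in hand, your concluding step is exactly right.
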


Later, \cite{Warren:OptimizingPackingThesis} defined the smallest such~$K$ to
be the \emph{packing complexity} of~$\sigma$ (denoted~$\kappa(\sigma)$).

The bound to~$\kappa(\sigma)$ provided
by \citet[Theorem~2.7]{AlbertAtkinsonHandleyHoltonStromquist:PackingPermutations},
however, seems to be quite far from the real value.

H\"{a}st\"{o}'s proof of Theorem~\ref{thm:hasto} was generalized by Warren to improve
this bound for the case when the layer sequence is non-decreasing.

\begin{theorem}[{\citet[Theorem~3.3.7]{Warren:OptimizingPackingThesis}}]
  If~$\sigma\in\fS$ is a layered permutation whose layer
  sequence is~$(\ell_1,\ell_2,\ldots,\ell_k)$
  and~$2\leq\ell_1\leq\ell_2\leq\cdots\leq\ell_k$, then~$\kappa(\sigma)\leq
  \max\{(k-1)/(2^\ell-2), 0\}$.
\end{theorem}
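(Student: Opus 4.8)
The plan is to show that $\cL_{N,\sigma}\le\cL_{N-1,\sigma}$ as soon as $N$ is large enough, and then read off the conclusion. Write the layer sequence of $\sigma$ as $(\ell_1,\ldots,\ell_k)$ with $2\le\ell_1\le\cdots\le\ell_k$ (so $\ell_1$ is the smallest layer length, which is what ``$2^\ell$'' in the statement refers to), put $m=\lvert\sigma\rvert$ and $K_0=(k-1)/(2^{\ell_1}-2)$. The case $k=1$ is immediate, since then $q_{n,\sigma}$ is maximised at a vertex of the simplex and $\cL_{n,\sigma}=1=p(\sigma)$ for all $n$, so assume $k\ge 2$. By Proposition~\ref{prop:Pricealgo} the sequence $(\cL_{n,\sigma})_n$ is non-decreasing, and by Corollary~\ref{cor:priceconverges} it converges to $p(\sigma)$; hence, once we know
\begin{align*}
  \cL_{N,\sigma}\le\cL_{N-1,\sigma}\qquad\text{whenever}\qquad N-k\ge\max\{1,K_0\},
\end{align*}
it follows (combining with the reverse inequality of Proposition~\ref{prop:Pricealgo}) that $(\cL_{n,\sigma})_n$ is constant, and equal to $p(\sigma)$, for $n\ge k+\lceil\max\{1,K_0\}\rceil-1$. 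Since $\lceil\max\{1,K_0\}\rceil-1\le\max\{K_0,0\}$ (split according to whether $K_0\le 1$ and use $\lceil x\rceil-1\le x$), this gives $\kappa(\sigma)\le\max\{K_0,0\}$, which is the assertion.

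So fix $N$ with $N-k\ge\max\{1,K_0\}$. By Lemma~\ref{lem:incmax} (applied exactly as in the displayed chain of inequalities in the proof of Corollary~\ref{cor:smallantipermut}, specialised to $\sigma'=\sigma$), there is an optimal point $0\le x_1\le x_2\le\cdots\le x_N$ with $\sum_{i=1}^Nx_i=1$ and $q_{N,\sigma}(x_1,\ldots,x_N)=\cL_{N,\sigma}$. The idea is to merge the two smallest coordinates: I will bound $\cL_{N-1,\sigma}$ from below by $q_{N-1,\sigma}(x_1+x_2,x_3,x_4,\ldots,x_N)$ (this point need not be sorted, which is irrelevant for a lower bound on the maximum $\cL_{N-1,\sigma}$) and show this is at least $q_{N,\sigma}(x)$. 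Splitting the defining sum of each Price Polynomial according to whether the chosen index set uses the position(s) carrying $x_1$ and/or $x_2$, and using that $x_3\le\cdots\le x_N$ so that in every remaining factor the smallest available value is matched with the smallest available exponent, one obtains
\begin{align*}
  q_{N,\sigma}(x_1,\ldots,x_N) & = \binom{m}{\ell_1,\ldots,\ell_k}\Bigl(S_0+(x_1^{\ell_1}+x_2^{\ell_1})A+x_1^{\ell_1}x_2^{\ell_2}B\Bigr),
  \\
  q_{N-1,\sigma}(x_1+x_2,x_3,\ldots,x_N) & = \binom{m}{\ell_1,\ldots,\ell_k}\Bigl(S_0+(x_1+x_2)^{\ell_1}A\Bigr),
\end{align*}
where $A=\sum_{I\in\binom{[N]\setminus\{1,2\}}{k-1}}x_I^{\ell_{[2..k]}}$ and $B=\sum_{I\in\binom{[N]\setminus\{1,2\}}{k-2}}x_I^{\ell_{[3..k]}}$ (sums over subsets of $\{3,\ldots,N\}$), and $S_0$ is the common term not involving $x_1$ or $x_2$. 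Thus it suffices to prove $x_1^{\ell_1}x_2^{\ell_2}B\le\bigl((x_1+x_2)^{\ell_1}-x_1^{\ell_1}-x_2^{\ell_1}\bigr)A$.

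For the final estimate, first expand
\begin{align*}
  (x_1+x_2)^{\ell_1}-x_1^{\ell_1}-x_2^{\ell_1} = \sum_{v=1}^{\ell_1-1}\binom{\ell_1}{v}x_1^vx_2^{\ell_1-v} \ge (2^{\ell_1}-2)\,x_1^{\ell_1-1}x_2,
\end{align*}
where the inequality uses $x_1\le x_2$, so $x_1^vx_2^{\ell_1-v}\ge x_1^{\ell_1-1}x_2$ for $1\le v\le\ell_1-1$; hence it is enough to show $x_1x_2^{\ell_2-1}B\le(2^{\ell_1}-2)A$. Here the hypothesis on $N$ enters. Counting each $(k-1)$-subset $J$ of $\{3,\ldots,N\}$ once for each of its $k-1$ elements $i$ (so $J=I'\cup\{i\}$ with $|I'|=k-2$), and applying Lemma~\ref{lem:incmax} in the form $x_{I'\cup\{i\}}^{\ell_{[2..k]}}\ge x_i^{\ell_2}\,x_{I'}^{\ell_{[3..k]}}$ — valid because the right-hand side is the value of a generally non-sorted assignment of the exponents $\ell_2,\ldots,\ell_k$ to the $k-1$ values indexed by $I'\cup\{i\}$, whereas the left-hand side is the sorted, hence maximal, one — we get
\begin{align*}
  A = \frac{1}{k-1}\sum_{I'\in\binom{[N]\setminus\{1,2\}}{k-2}}\ \sum_{i\in\{3,\ldots,N\}\setminus I'}x_{I'\cup\{i\}}^{\ell_{[2..k]}}
  \ \ge\ \frac{1}{k-1}\sum_{I'\in\binom{[N]\setminus\{1,2\}}{k-2}}x_{I'}^{\ell_{[3..k]}}\sum_{i\in\{3,\ldots,N\}\setminus I'}x_i^{\ell_2}.
\end{align*}
For every such $I'$, the set $\{3,\ldots,N\}\setminus I'$ has $N-k$ elements, all of them indices $\ge 3$; since $x$ is sorted, each corresponding value satisfies $x_i\ge x_2\ge x_1$, whence $x_i^{\ell_2}\ge x_2^{\ell_2}\ge x_1x_2^{\ell_2-1}$. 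Therefore $A\ge\frac{N-k}{k-1}\,x_1x_2^{\ell_2-1}B$, and
\begin{align*}
  (2^{\ell_1}-2)A \ge \frac{(N-k)(2^{\ell_1}-2)}{k-1}\,x_1x_2^{\ell_2-1}B \ge x_1x_2^{\ell_2-1}B,
\end{align*}
the last step being precisely the assumption $N-k\ge K_0$. This completes the proof. The crux — and the step I expect to be the main obstacle to get right — is this last paragraph: comparing $A$ with $B$, and in particular extracting the linear factor $N-k$, which is exactly what lets $k$ exceed $2^{\ell_1}-1$ at the price of a positive packing complexity. The rearrangement Lemma~\ref{lem:incmax} is used here in the slightly unusual direction of bounding a sorted product from below by a non-sorted one after peeling off the smallest exponent.
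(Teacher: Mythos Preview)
The paper does not itself prove this statement --- it is quoted from Warren's thesis in the concluding remarks and no proof is given. Your argument is correct, and it follows precisely the ``merge the two smallest coordinates'' template that the paper uses for its own analogous results (Lemma~\ref{lem:onlyklayers} leading to Theorem~\ref{thm:smallanti}), which the authors in turn attribute to H\"{a}st\"{o} and Warren. In those proofs one picks a sorted optimal point, merges the smallest coordinate away, writes out the difference of Price-type polynomials, and uses Lemma~\ref{lem:incmax} to compare the two principal sums; you do the same, merging $x_1$ into $x_2$, and your final estimate $\sum_{i\notin I'}x_i^{\ell_2}\ge(N-k)\,x_1x_2^{\ell_2-1}$ is the direct analogue of the step in the proof of Lemma~\ref{lem:onlyklayers} where the inner sum $\sum_i x_i^{\ell_1}$ is bounded below by a single term $x_{N-k+1}^{\ell_1}$. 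The only real difference --- and you identified it correctly as the crux --- is that you retain all $N-k$ surviving terms rather than just one, which is exactly what produces the linear factor $N-k$ and lets $N>k$ compensate when $2^{\ell_1}-1<k$; specialising to $N-k=1$ collapses your bound to H\"{a}st\"{o}'s condition (Theorem~\ref{thm:hasto}).
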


A natural question would then be if the proof of Theorem~\ref{thm:smallanti} can be
generalized to prove the following conjecture.
\begin{conjecture}
  If~$\sigma$ has block sequence~$(\widehat{a},\ell_1,\ell_2,\ldots,\ell_k)$
  with~$2\leq a\leq\ell_1\leq\ell_2\leq\cdots\leq\ell_k$, then there exists~$K$ such that
  \begin{align*}
    p(\sigma) & = \cL_{K,[K]\setminus\{1\},\sigma}.
  \end{align*}
\end{conjecture}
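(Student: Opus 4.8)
The conjecture is the antilayer counterpart of the passage from H\"{a}st\"{o}'s Theorem~\ref{thm:hasto} to the finiteness of the packing complexity established by \citet[Theorem~2.7]{AlbertAtkinsonHandleyHoltonStromquist:PackingPermutations} and sharpened by Warren; indeed Theorem~\ref{thm:smallanti} is the case $K=k$, and the conjecture asks only for \emph{some} finite $K$ once the hypothesis $2^a-a-1\geq k$ is dropped. The plan is to follow the proof of Theorem~\ref{thm:smallanti} as far as it goes and to weaken only its last step. First I would observe that Lemmas~\ref{lem:xnondec} and~\ref{lem:yxrel} use nothing beyond $2\leq a\leq\ell_1\leq\cdots\leq\ell_k$, so for every $N$ the maximum defining $\cL_{N,[N]\setminus\{1\},\sigma}$ may still be restricted to points $(y,x)$ with $0\leq x_1\leq\cdots\leq x_N$ and $0\leq y\leq x_{N-k+1}$. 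The whole burden therefore falls on replacing Lemma~\ref{lem:onlyklayers} by a \emph{threshold} version: there is $K=K(a,k,\ell_1)$ such that $\cL_{N,[N]\setminus\{1\},\sigma}\leq\cL_{N-1,[N-1]\setminus\{1\},\sigma}$ for every $N>K$. Granting this, iteration gives $\cL_{N,[N]\setminus\{1\},\sigma}\leq\cL_{K,[K]\setminus\{1\},\sigma}$ for all $N\geq K$; since padding an optimal point with a trailing zero layer (all exponents occurring in $p_N$ are $\geq 1$) shows that $\cL_{N,[N]\setminus\{1\},\sigma}$ is non-decreasing in $N$, and since it is bounded above by $p(\sigma)$ with limit $p(\sigma)$ by Theorem~\ref{thm:extpriceconverges}, we would conclude $p(\sigma)=\cL_{K,[K]\setminus\{1\},\sigma}$.

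To attack the threshold merge I would return to the exact identity obtained in the proof of Lemma~\ref{lem:onlyklayers},
\begin{align*}
  & p_{N-1}(y+x_1,x_2,\ldots,x_N) - p_N(y,x_1,\ldots,x_N)
  \\
  & =
  \sum_{u=1}^a\frac{1}{u!}\Biggl(\sum_{v=0}^{u-2}\binom{u}{v}y^v x_1^{u-v}\Biggr)
  \sum_{I\in\binom{[N]\setminus\{1\}}{a-u+k}}x_I^{\ell_{[-a+u+1..k]}}
  \;-\;
  \frac{y^a}{a!}x_1^{\ell_1}\sum_{I\in\binom{[N]\setminus\{1\}}{k-1}}x_I^{\ell_{[2..k]}}.
\end{align*}
In Lemma~\ref{lem:onlyklayers} only the $u=a$, $v=0$ summand of the positive part is retained and $\sum_{I\in\binom{[N]\setminus\{1\}}{k}}x_I^{\ell_{[k]}}$ is bounded below by keeping a single term $x_{N-k+1}^{\ell_1}$ in the inner sum, which is exactly what forces the sufficient condition $2^a-a-1\geq k$. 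When $2^a-a-1<k$ the right move is instead to merge the two smallest layers, i.e.\ to compare $p_N(y,x_1,x_2,\ldots,x_N)$ with $p_{N-1}(y,x_1+x_2,x_3,\ldots,x_N)$, which is the antilayer-decorated form of Warren's merge of \citet[Theorem~3.3.7]{Warren:OptimizingPackingThesis}. Here the gain comes from the convexity of $t\mapsto t^{\ell_j}$ on the remaining $k-1$ layers and grows with the number $\binom{N-3}{k-1}$ of available merge partners times a product of the large layer weights, whereas the loss is a sum of cross-terms bounded by a constant depending only on $a$ and $k$ times a positive power of $x_2$; since in the optimal point $x_2\leq 2/N$, the gain should overcome the loss once $N$ exceeds a bound of the shape $k+\lceil(k-1)/(2^{\ell_1}-2)\rceil$ (possibly adjusted by a term depending on $a$), and this is the value of $K$.

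The hard part, and the reason this remains a conjecture, is that the antilayer variable $y$ is \emph{not} a spectator in this merge: in the Extended Price Polynomial the $a$ points of the antilayer of a quasi-block decomposition of $\tau$ may be distributed between the slot $y$ and the single-point layers among the $x_i$, so each of $x_1,x_2$ occurs in $p_N$ both as a genuine layer of length $\geq 2$ (through the quasi-blocks with $\ell_j\geq 2$) and as a piece of a split antilayeroid (through the padded $\ell_j=1$ with $j\leq 0$), and the two roles must be tracked simultaneously. Carrying out Warren's inductive estimate in the presence of these extra quasi-blocks, and checking that the thresholds arising from the pure-layer terms ($u=a$) and from the split-antilayer terms ($u<a$) combine into a single $K$, is the main obstacle; once that is done one should in fact be able to read off an explicit bound on the analogue of the packing complexity relative to $[K]\setminus\{1\}$, in parallel with Warren's $\max\{(k-1)/(2^{\ell_1}-2),0\}$. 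An alternative route that would sidestep the bookkeeping is to imitate the compactness proof of \citet[Theorem~2.7]{AlbertAtkinsonHandleyHoltonStromquist:PackingPermutations}: pass to a weak limit of the layer-weight measures of an optimizing sequence and show the limiting optimal measure is atomic with support bounded in terms of $a$ and $k$; the difficulty there is the same coupling, now phrased as the interaction between the atom carrying the antilayer and the atoms carrying the layers.
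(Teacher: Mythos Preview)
The statement you are addressing is a \emph{conjecture} in the paper, not a theorem: the paper offers no proof, only the remark that ``A natural question would then be if the proof of Theorem~\ref{thm:smallanti} can be generalized to prove the following conjecture.'' There is therefore no proof in the paper to compare your proposal against.

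Your write-up is not a proof either, and to your credit you say so explicitly (``The hard part, and the reason this remains a conjecture, is that\ldots''). What you have produced is a well-informed research outline: you correctly observe that Lemmas~\ref{lem:xnondec} and~\ref{lem:yxrel} do not use the hypothesis $2^a-a-1\geq k$, you correctly isolate the point at which Lemma~\ref{lem:onlyklayers} needs that hypothesis, and you correctly identify the Warren-type merge of the two smallest layers as the natural alternative when that hypothesis fails. You also correctly flag the genuine obstruction, namely that in the Extended Price Polynomial the variables $x_1,x_2$ play a double role (as layers of length $\geq 2$ and as pieces of a split antilayer), which makes the Warren bookkeeping substantially harder than in the pure-layer case.

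So the situation is: the paper has no proof, your proposal has no proof, and both agree on why. If you want to turn this into an actual contribution you would need to carry out the estimate you describe---either the explicit threshold merge with the mixed $u<a$ and $u=a$ terms controlled simultaneously, or the compactness/permuton argument---and that is precisely the open work the paper is pointing to.
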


As several papers on permutation packing suggest, the family of layered permutations
is much easier to work with than the general family of permutations due to Price
Polynomials. Using the theory of permutons (\textit{i.e.}, limits of permutations,
see~\cite{HoppenKohayakawaMoreira:LimitsOfPermutationSequences}), the fact that Price
Bounds converge to~$p(\sigma)$ can be seen as a topological property of permutons as
we illustrate below.

\begin{proposition}
  If~$L_k$ is the family of permutons that can be obtained as limits of layered
  permutations with at most~$k$ layers, then~$\bigcup_{k\in\naturals}L_k$ is dense in the
  family of layered permutons (limits of layered permutations) with respect to the
  permuton topology (a sequence of permutons~$(W_n)_{n\in\naturals}$ converges to~$W$ if
  and only if~$\lim_{n\to\infty}p(\sigma,W_n)=p(\sigma,W)$ for every~$\sigma\in\fS$).
\end{proposition}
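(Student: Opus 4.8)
\begin{proofsketch}[Proof plan]
We only need the inclusion of the layered permutons into $\overline{\bigcup_{k}L_k}$; the opposite inclusion is clear (each $L_k$ consists of layered permutons and the layered permutons form a closed set) and would in fact give equality. Since the permuton topology is metrizable and $W_n\to W$ means $p(\sigma,W_n)\to p(\sigma,W)$ for every $\sigma$, it suffices to prove the following finitary statement: for every layered permuton $W$, every positive integer $M$ and every $\epsilon>0$, there exist a $k$ and a $V\in L_k$ with $\lvert p(\sigma,W)-p(\sigma,V)\rvert<\epsilon$ for every $\sigma$ of length at most $M$; a diagonalization (running this with $M=n$ and $\epsilon=1/n$) then produces a sequence in $\bigcup_kL_k$ converging to $W$.

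The plan is to keep the few ``large'' layers of $W$ intact and to crush all the rest into a bounded number of equal reverse blocks. First I would recall the structure of a layered permuton $W$, which can be read off from the permuton description of limits of layered permutations: $W$ is a direct sum of a countable family of \emph{layers} --- pairwise disjoint subintervals of $[0,1]$ carrying the uniform measure on their antidiagonal, of lengths $a_1,a_2,\dots$ with $\sum_ia_i\le 1$ --- together with a residual set $U$ of measure $1-\sum_ia_i$ on which $W$ is carried by the diagonal, so that $W$ restricted to $U$ behaves, for the purpose of pattern densities, like an identity permuton. Now fix $\delta>0$: at most $1/\delta$ layers have length $\ge\delta$, and the rest of $[0,1]$ --- which contains every layer of length $<\delta$ and all of $U$ --- is a disjoint union of at most $1/\delta+1$ intervals $J_r$. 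Fix also a positive integer $k_2$, split each $J_r$ into $k_2$ consecutive subintervals of equal length, and let $V$ be obtained from $W$ by replacing, inside every $J_r$, all of $W$ by the uniform measures on the antidiagonals of these subintervals. Then $V$ is a finite direct sum of $k'\le(1/\delta+1)(k_2+1)$ reverse blocks of relative lengths $c_1,\dots,c_{k'}>0$, hence $V=\lim_{m\to\infty}\tau_m$ where $\tau_m$ has layer sequence approximately $(c_1m,\dots,c_{k'}m)$; thus $V\in L_{k'}\subseteq\bigcup_kL_k$. The crucial point is that $k'$ depends only on $\delta$ and $k_2$, not on the (possibly huge) number of layers of $W$.

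To bound $\lvert p(\sigma,W)-p(\sigma,V)\rvert$ for $\lvert\sigma\rvert=m\le M$ I would interpolate through the layered permuton $W'$ obtained from $W$ by flattening every $J_r$ to the plain diagonal, and use the following elementary coupling bound: if two permutons are each supported on the union of the squares $P\times P$ over the parts $P$ of a common partition of $[0,1]$ into intervals, and they differ only inside the block squares $R_t\times R_t$ of a subcollection $\{R_t\}$ of the parts, then their densities $p(\sigma,\cdot)$ differ by at most $\binom{m}{2}\sum_t\lvert R_t\rvert^2$, because the $\sigma$-pattern induced by $m$ independent sampled points is unchanged whenever no $R_t$ receives two of them. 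For $W$ versus $W'$ the modified blocks are the layers of length $<\delta$, so $\sum_t\lvert R_t\rvert^2\le\delta\sum_t\lvert R_t\rvert\le\delta$; for $W'$ versus $V$ they are the subintervals of the $J_r$, so $\sum_t\lvert R_t\rvert^2=\sum_rk_2(\lvert J_r\rvert/k_2)^2\le1/k_2$. (In both comparisons, when no modified block is hit twice, the sampled points lying in $\bigcup_rJ_r$ form an increasing sequence under all three permutons, which is why the induced patterns coincide.) Hence $\lvert p(\sigma,W)-p(\sigma,V)\rvert\le\binom{M}{2}(\delta+1/k_2)$, and it remains only to pick $\delta$ with $\binom{M}{2}\delta<\epsilon/2$ and then $k_2$ with $\binom{M}{2}/k_2<\epsilon/2$.

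The step I expect to be the main obstacle is exactly this quadratic, non-accumulating error estimate: it is essential that the bounds $\sum_t\lvert R_t\rvert^2\le\delta$ and $\sum_t\lvert R_t\rvert^2\le1/k_2$ do not carry a factor equal to the number of blocks --- which would be ruinous, since that number is unbounded over all layered permutons --- and this works only because $\sum_t\lvert R_t\rvert\le1$ enters quadratically. The remaining points are routine but still deserve care: the precise structure theorem for layered permutons (in particular that the non-layer part sits on the diagonal rather than forming some further layered configuration), and the justification of the coupling bound itself, \textit{i.e.}\ that altering a permuton only inside block squares changes a density $p(\sigma,\cdot)$ by at most the probability that some altered block collects two of the $\lvert\sigma\rvert$ sampled points.
\end{proofsketch}
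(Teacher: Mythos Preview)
Your argument is correct, but it is far more elaborate than what the paper has in mind. The paper's proofsketch points to a one-line observation: by definition a layered permuton $W$ is a limit $W=\lim_n\sigma_n$ of (finite) layered permutations; each $\sigma_n$, viewed as its associated step-function permuton, already lies in $L_{k_n}$ where $k_n$ is its number of layers (indeed, the blow-ups with layer sequence $(m\ell_1,\ldots,m\ell_{k_n})$ converge to it as $m\to\infty$). Hence $W\in\overline{\bigcup_kL_k}$ immediately, with no need to analyse the internal structure of $W$ at all. This is what ``a standard diagonalization argument'' and ``analogous to the proofs of Corollary~\ref{cor:priceconverges} and Theorem~\ref{thm:priceconvergesprime}'' are gesturing at.

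Your route --- isolating the large layers, flattening the rest to the diagonal, then re-chopping into $k_2$ equal reverse blocks, with a quadratic coupling bound $\binom{m}{2}\sum_t\lvert R_t\rvert^2$ controlling the error at each step --- is genuinely different and buys something the paper's argument does not: an explicit, quantitative rate of approximation (how large $k$ must be to get all densities of length $\le M$ within~$\epsilon$), together with a structural picture of how an arbitrary layered permuton is built from finitely many blocks plus small debris. The cost is that you must invoke and justify the structure theorem for layered permutons (countably many antidiagonal intervals plus a diagonal residual), which the paper's argument sidesteps entirely by working directly from the definition as a limit. For the bare density statement, the paper's approach is the economical one; yours would be the right starting point if one wanted effective bounds.
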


\begin{proofsketch}
  Either analogous to the proofs of Corollary~\ref{cor:priceconverges} and
  Theorem~\ref{thm:priceconvergesprime} or by a standard diagonalization argument.
\end{proofsketch}

The proposition above by itself is not interesting, what is interesting is the fact
that computing densities of layered permutations in permutons of~$L_k$ is easy
(yields Price Polynomials). In this light, Extended Price Polynomials can be seen
simply as replacing~$L_k$ with the family of permutons~$B_k$ that can be obtained as
limits of layered permutations with at most~$k$ blocks instead of~$k$ layers, which
trivially preserves the density property (since~$L_k\subset B_k$), but still
yields a family in which densities of layered permutations are easy to compute.

As mentioned before, Price Polynomials are quite a useful tool when studying
packing of layered permutations and there is still not an analogous tool for
non-layered permutations, so the following natural question arises.

\begin{question}
  Is there a family of permutons~$F$ that is dense in the family of all permutons and
  is such that computing densities of permutations in permutons of~$F$ is still easy?
\end{question}

Finally, Theorem~\ref{thm:minmono} is a double-edged knife in the problem of
minimization of monotone sequences, because, on the diagonal case (\textit{i.e.},
when~$k=\ell$) it suggests that Conjecture~\ref{conj:myers} is true
(since we have~$p''(\Id_{m+1}+\Rev_{m+1})=1/m^m$). On the other hand, on the general case, it
proves that
\begin{align*}
  p''(\Id_3 + \Rev_4) & = \frac{1}{2^3} > \frac{1}{3^2} \geq p'(\Id_3 + \Rev_4),
\end{align*}
which means that, when~$k\neq\ell$, the problem restricted to the class of layered
permutations is a distinct problem.

%\nocite{*}
\bibliographystyle{abbrvnat}
% use the following instead if you encounter problems 
%\bibliographystyle{alpha}
\bibliography{refs}
\label{sec:biblio}

\end{document}